\documentclass[11pt,oneside,reqno]{amsart}
\usepackage{comment}
\usepackage[margin=1in]{geometry}
\usepackage{amsmath,amsthm,amssymb,mathtools,bm}
\usepackage{graphicx}
\usepackage{placeins}
\usepackage{enumitem}
\usepackage{booktabs}
\usepackage{xcolor}
\usepackage{hyperref}
\usepackage[
  backend=biber,
  style=numeric-comp,
  sorting=nyt,
  maxbibnames=99
]{biblatex}
\usepackage{tikz}
\usetikzlibrary{decorations.pathreplacing}

\hypersetup{
  colorlinks=true,
  citecolor=red,
  linkcolor=red,
  urlcolor=black
}

\numberwithin{equation}{section}
\theoremstyle{plain}
\newtheorem{theorem}{Theorem}[section]
\newtheorem{lemma}[theorem]{Lemma}
\newtheorem{corollary}[theorem]{Corollary}
\newtheorem{proposition}[theorem]{Proposition}

\theoremstyle{definition}

\theoremstyle{remark}
\newtheorem{remark}[theorem]{Remark}

\newcommand{\R}{\mathbb R}
\newcommand{\C}{\mathbb C}
\newcommand{\Z}{\mathbb Z}

\newcommand{\T}{\mathbb T}

\newcommand{\1}{\mathbf 1}
\newcommand{\eps}{\varepsilon}

\newcommand{\norm}[1]{\left\|#1\right\|}

\newcommand{\Spec}{\operatorname{Spec}}
\newcommand{\rank}{\operatorname{rank}}
\newcommand{\tr}{\operatorname{tr}}

\newcommand{\ceil}[1]{\left\lceil #1\right\rceil}
\newcommand{\floor}[1]{\left\lfloor #1\right\rfloor}

\title{Displacement structure and  eigenvalue bounds for multiband prolate matrices}
\author{
Luis Gomez}
\address{Department of Mathematics, The University of Texas at Austin, TX, USA}
\email{leg3445@my.utexas.edu}
\author{Hana Huber}
\address{PhD program in mathematics, Graduate Center, CUNY, NY, USA}
\email{hhuber@gradcenter.cuny.edu}
\author{Azita Mayeli}
\address{PhD program in mathematics, Graduate Center, CUNY, NY, USA}
\email{amayeli@gc.cuny.edu}
\thanks{The research of A.M. was supported in part by the National Science Foundation grant DMS-2453769 and the PSC-CUNY grant 67807-00 57.}
\date{October 2026}

\subjclass[2020]{Primary 47B06; Secondary 42A38, 94A12.}
\keywords{time-frequency limiting operators, discrete prolate spheroidal sequences, multiband signals, eigenvalue distribution, frequency boundary points, displacement rank}

\begin{document}

\begin{abstract}
We obtain explicit non-asymptotic bounds for the eigenvalue transition
of multiband prolate matrices. Let $S\subset[-1/2,1/2]$ be a finite
union of pairwise disjoint intervals, and let
$\lambda_0\ge\cdots\ge\lambda_{N-1}$ be the eigenvalues of $B_{N,S}$.
If $\nu$ is the number of boundary points of $S$ on the frequency torus,
then, for every $0<\eps<1/2$,
\[
\#\{k:\eps<\lambda_k<1-\eps\}
\le
\nu\ceil{\frac{\log(4N)}{\pi^2}
\log\!\left(\frac{4}{\eps(1-\eps)}\right)}.
\]
Compared with the multiband estimate of Zhu and
Wakin~\cite{wakin2017approximating}, this improves the dependence on
$\eps$ from order $1/[\eps(1-\eps)]$ to logarithmic dependence.
We also prove a second bound in which the main logarithmic term depends
on the time--bandwidth product $N|S|$ rather than only on $N$.

We also compare $S$ with frequency sets having fewer boundary points.
Using a classical large-sieve estimate, we take into account not only
the total size of the change in the frequency set, but also how that
change is distributed. For sets with many small gaps, this can give a
bound growing like $\log N$ for fixed $\eps$, even when the bounds based
only on the total size of the gaps grow at least linearly in $N$.
We also obtain exponential bounds for individual eigenvalues and use
them to estimate the error in approximating sampled multiband signals
by leading eigenvectors. A discretization argument gives corresponding
results for the continuous problem. Numerical examples compare the
bounds and examine how the locations of the frequency intervals affect
the eigenvalues.
\end{abstract}

\maketitle

\section{Introduction}\label{sec:intro}
Let $S\subset[-1/2,1/2]$ be a measurable frequency set and let $B_S$
denote the orthogonal projection in $\ell^2(\Z)$ onto sequences whose
discrete-time Fourier transform is supported in $S$. We identify the
endpoints $-1/2$ and $1/2$, so the frequency variable is naturally
viewed on the torus $\T=\R/\Z$. Let $T_N$ be the projection in
$\ell^2(\Z)$ onto the time indices $I_N:=\{0,1,\dots,N-1\}$.
The finite time--frequency limiting operator on $\operatorname{ran}(T_N)$  is
\[
B_{N,S}:=T_NB_ST_N.
\]
Identifying $\operatorname{ran}(T_N)$ with $\C^N$, we regard $B_{N,S}$ as
the $N\times N$ matrix with entries
\[
B_{N,S}[m,n]
=
\int_S e^{i2\pi\xi(m-n)}\,d\xi,
\qquad 0\le m,n\le N-1.
\]
When $|W|<1/2$ and  $S=[-W,W]$, $B_{N,S}$ is the classical prolate matrix,
whose eigenvectors are the Slepian basis vectors. Its eigenvalues
cluster near $1$ and $0$, with a small number in between; see the
classical work of Slepian, Landau, Pollak, and Widom~\cite{Bell1,Bell2,Bell3,Bell5,LandauWidom80}.
Multiband concentration spaces arise in the approximation of sampled
signals \cite{Davenport2012,wakin2017approximating,Li2025Approximating}.
The eigenvalues determine how many leading eigenvectors are needed
for a prescribed accuracy. Section~\ref{sec:signal-approximation}
gives the corresponding dimension and error bounds.

Karnik, Romberg, and Davenport~\cite{Karnik21} obtained explicit
one-band eigenvalue estimates using the low displacement rank of
finite matrix truncations of $(I-T_N)B_ST_N$, which records the
bandlimited output outside the time window,  together with
singular-value bounds for matrices with displacement structure due to
Beckermann and Townsend~\cite{BeckermannTownsend2019}. 
Their earlier work
\cite[Sections~III and~VI]{Karnik2019Nonuniform}
showed that the relevant multiband sampling matrices are generalized
Cauchy or generalized Cauchy-like matrices.  
We apply the same displacement singular-value method to the
representation in terms of the interval endpoints and verify its exact rank.
The resulting estimates improve the dependence on $\varepsilon$ in the
trace-defect bound of Zhu and Wakin~\cite{wakin2017approximating}.

Throughout the discrete part, let $S\subset\T$ be a finite
union of frequency intervals with $0<|S|<1$.
An interval on $\T$ may cross the identified endpoints
$-1/2$ and $1/2$. Since $S$ has a gap, a frequency translation
modulo $1$ places this identified point inside a gap without
changing the eigenvalues (Lemma~\ref{lem:frequency-translation}).
We may therefore write
\begin{equation}
S=\bigcup_{r=1}^{M}[\alpha_r,\beta_r],
\qquad
-\frac12<\alpha_1<\beta_1<\cdots<\alpha_M<\beta_M<\frac12.
\label{eq:general-components}
\end{equation}
We write $\partial_{\T}S$ for the boundary in $\T$ and $\#$ for
cardinality, and set $\nu:=\#\partial_{\T}S=2M$.
For the refined estimate it is convenient to write the same components as
\begin{equation}
S=\bigcup_{r=1}^{M}[c_r-W_r,c_r+W_r],
\label{eq:component-form}
\end{equation}
where $c_r:=(\alpha_r+\beta_r)/2$ and
$W_r:=(\beta_r-\alpha_r)/2>0$. For
Corollary~\ref{cor:symmetric-annuli} and the numerical examples,
we also consider symmetric sets of the form
\begin{equation}
S=\bigcup_{j=1}^{J}\bigl([-b_j,-a_j]\cup[a_j,b_j]\bigr),
\qquad
0\le a_1<b_1<a_2<b_2<\cdots<a_J<b_J \le \tfrac12.
\label{eq:annular-form}
\end{equation}
The main theorems do
not require symmetry.

The matrix $B_{N,S}$ is a positive contraction. We denote its
eigenvalues, counted with multiplicity, by
$1>\lambda_0\ge\lambda_1\ge\cdots\ge\lambda_{N-1}>0$. (When it is useful to display the dependence on the parameters, we write \(\lambda_k=\lambda_k(N,S)\).)  
Moreover, $\tr(B_{N,S})=N|S|$, where $\tr$ denotes the trace of a matrix.
The quantity $N|S|$ is the time--bandwidth product, also known as the
Shannon number, and serves as the natural reference index for the
eigenvalue transition (see Proposition~\ref{prop:crossing}).
Our main goal is to control the size of the plunge region
\[
M_\eps(N,S):=\#\{k:\eps<\lambda_k(N,S)<1-\eps\},
\qquad 0<\eps<\tfrac12.
\]

\subsection{Transition bounds}
The two transition estimates below follow from the endpoint structure
of finite truncations $X_L$ of $(I-T_N)B_ST_N$.
The matrix $X_L$ is defined in Section~\ref{sec:truncation};
$C_L$ and $D$ are the diagonal matrices of
Section~\ref{sec:displacement}. We first record the rank calculation.

\begin{theorem}[Endpoint displacement rank]
\label{thm:displacement-main}
Let $S$ be a union of $M$ connected frequency intervals as in \eqref{eq:general-components}, and set $\nu=\#\partial_{\T}S=2M$.  Then, for every $L\ge1$,
\begin{equation}
\rank(C_LX_L-X_LD)\le \nu.
\label{eq:rank-upper-main}
\end{equation}
If $N,L\ge\nu$, then
\begin{equation}
\rank(C_LX_L-X_LD)=\nu.
\label{eq:rank-exact-main}
\end{equation}
\end{theorem}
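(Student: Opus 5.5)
The plan is to compute the displacement $C_LX_L-X_LD$ entrywise and recognize it as a sum of $\nu$ rank-one outer products built from the interval endpoints. I will assume Section~\ref{sec:truncation} defines $X_L$ as the restriction of $(I-T_N)B_ST_N$ to a finite set $R_L$ of row indices outside $I_N$, with columns indexed by $n\in I_N$. I will also assume that $R_L$ contains a block of $L$ consecutive integers, e.g.\ $\{N,\dots,N+L-1\}$, possibly together with $\{-L,\dots,-1\}$. Finally, I assume Section~\ref{sec:displacement} defines $C_L=\operatorname{diag}(m)_{m\in R_L}$ and $D=\operatorname{diag}(n)_{n\in I_N}$; if the paper uses affine or rescaled versions of these diagonals, the argument below adapts directly.

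\emph{Step 1 (upper bound).} For $m\in R_L$ and $n\in I_N$ we have $m\neq n$, so
\[
X_L[m,n]=\int_S e^{i2\pi\xi(m-n)}\,d\xi=\sum_{r=1}^M\frac{e^{i2\pi\beta_r(m-n)}-e^{i2\pi\alpha_r(m-n)}}{i2\pi(m-n)}.
\]
The entries of the displacement are $(C_LX_L-X_LD)[m,n]=(m-n)X_L[m,n]$, so the denominator cancels:
\[
(C_LX_L-X_LD)[m,n]=\frac{1}{i2\pi}\sum_{p=1}^{\nu}\sigma_p\, e^{i2\pi\gamma_p m}\,\overline{e^{i2\pi\gamma_p n}}.
\]
Here $\{\gamma_1,\dots,\gamma_\nu\}=\{\alpha_1,\beta_1,\dots,\alpha_M,\beta_M\}$, with $\sigma_p=+1$ when $\gamma_p$ is a right endpoint $\beta_r$ and $\sigma_p=-1$ when it is a left endpoint $\alpha_r$. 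In matrix form this reads
\[
C_LX_L-X_LD=\frac{1}{i2\pi}\,U\Sigma V^*,
\]
where $U=[e^{i2\pi\gamma_pm}]_{m\in R_L,\,p}$, $V=[e^{i2\pi\gamma_pn}]_{n\in I_N,\,p}$ and $\Sigma=\operatorname{diag}(\sigma_p)$. Since this factors through $\C^\nu$, the bound \eqref{eq:rank-upper-main} follows for every $L\ge1$.

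\emph{Step 2 (exact rank).} The $\gamma_p$ are distinct points of $(-1/2,1/2)$ by \eqref{eq:general-components}, so the nodes $z_p=e^{i2\pi\gamma_p}$ are distinct points on the unit circle.
\begin{itemize}
\item $V$ is the $N\times\nu$ Vandermonde matrix $[z_p^n]$ with $0\le n\le N-1$. If $N\ge\nu$, its first $\nu$ rows form a square Vandermonde matrix on distinct nodes, so $V$ has full column rank $\nu$.
\item The rows of $U$ indexed by the consecutive block $N,\dots,N+L-1$ give $\operatorname{diag}(z_p^{N})$ multiplied by a Vandermonde matrix. Since $|z_p|=1$, this diagonal factor is invertible. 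Hence, if $L\ge\nu$, $U$ also has full column rank $\nu$.
\item $\Sigma$ is invertible.
\end{itemize}
Because $U$ has full column rank and $\Sigma V^*$ has full row rank, the product $U\Sigma V^*$ has rank exactly $\nu$, which gives \eqref{eq:rank-exact-main}.

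The only step I expect to need care is matching the precise definitions of $X_L$, $C_L$ and $D$ in Sections~\ref{sec:truncation}--\ref{sec:displacement}. In particular, I must check that the row set really contains $L$ consecutive integers, so that the Vandermonde full-rank argument applies. I must also check that the diagonal entries of $C_L$ and $D$ turn the displacement into multiplication by $(m-n)$, or by a nonzero multiple of it. If the diagonals are shifted (for instance, centered at $(N-1)/2$), the difference of diagonal entries is still $m-n$. If they are scaled by a common constant, only a harmless nonzero constant enters. Either way the argument is unchanged.
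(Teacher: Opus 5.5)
Your proposal is correct and follows the paper's proof: the factor $\ell-n$ cancels the denominator of the endpoint formula for $K_S$, giving a sum of $\nu$ rank-one terms, and the exact rank follows because both exponential factors are column-scaled Vandermonde matrices on the distinct nodes $e^{i2\pi\omega_j}$. Your guessed definitions of $X_L$, $C_L$, and $D$ match the paper's. The only cosmetic difference is that you take the square Vandermonde block from the rows $N,\dots,N+\nu-1$, while the paper uses the rows $-\nu,\dots,-1$.
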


For one interval, this is the rank-two structure used
in~\cite{Karnik21}. Applying their displacement singular-value
estimate gives the first transition bound.

\begin{theorem}[Transition bound depending on $N$]
\label{thm:first-main}
Let $S$ be a union of $M$ connected frequency intervals and let
$\nu=\#\partial_{\T}S=2M$. Let
$1>\lambda_0\ge\lambda_1\ge\cdots\ge\lambda_{N-1}>0$
be the eigenvalues of $B_{N,S}$. Then, for every $\eps\in(0,1/2)$,
\begin{equation}
M_\eps(N,S)
\le
\nu\ceil{\frac{\log(4N)}{\pi^2}
\log\!\left(\frac{4}{\eps(1-\eps)}\right)}.
\label{eq:first-main}
\end{equation}
\end{theorem}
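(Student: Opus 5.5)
We follow the strategy of Karnik, Romberg, and Davenport~\cite{Karnik21} and replace their rank-two displacement input with Theorem~\ref{thm:displacement-main}. The starting identity is
\[
B_{N,S}-B_{N,S}^2=T_NB_S(I-T_N)B_ST_N=\bigl((I-T_N)B_ST_N\bigr)^*\bigl((I-T_N)B_ST_N\bigr).
\]
Hence the singular values $\sigma_k$ of $(I-T_N)B_ST_N$ satisfy $\sigma_k^2=\lambda_k(1-\lambda_k)$, after reordering the $\lambda_k$ by decreasing $\lambda(1-\lambda)$. An eigenvalue lies in $(\eps,1-\eps)$ exactly when $\lambda(1-\lambda)>\eps(1-\eps)$. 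It therefore suffices to show
\[
\#\{k:\sigma_k>\sqrt{\eps(1-\eps)}\}\le \nu\,\ceil{\tfrac{\log(4N)}{\pi^2}\log\bigl(\tfrac{4}{\eps(1-\eps)}\bigr)}.
\]
The singular values of the finite truncations $X_L$ increase to those of the full operator as $L\to\infty$. It is therefore enough to prove, uniformly in $L$, a bound of the form
\[
\sigma_{\nu j}(X_L)\le 4\,Z_j\,\norm{X_L}\le 4Z_j,
\]
where $Z_j$ is a Zolotarev number, and then pass to the limit.

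\emph{Step 1 (displacement structure).} By Theorem~\ref{thm:displacement-main}, $C_LX_L-X_LD$ has rank at most $\nu$. We recall from Section~\ref{sec:displacement} where the diagonal entries of $C_L$ and $D$ lie. In the one-band case of \cite{Karnik21} they are real and separated, e.g. $D$ on an interval $[-(N-1),0]$ of time indices and $C_L$ on $[N,N+L]\cup[-L,-1]$, or their Möbius/exponential images. We will check that the multiband version uses the same diagonal matrices, with only the right-hand factors changed to carry the endpoints $\alpha_r,\beta_r$. The point sets are then disjoint, in fact separated by an annulus or a real gap, independently of $S$.

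\emph{Step 2 (Beckermann--Townsend).} The estimate of \cite{BeckermannTownsend2019} says: if $CX-XD$ has rank $\rho$, the spectrum of $C$ lies in a set $E$, and that of $D$ lies in a set $F$, then
\[
\sigma_{1+\rho j}(X)\le Z_j(E,F)\,\norm{X}.
\]
Here $\norm{X_L}\le1$, since $X_L$ is a compression of a product of projections. We take $\rho=\nu$. For $E$ and $F$ we take the sets from Step 1, which may be mapped by a Möbius transformation to $[-b,-a]\cup[a,b]$-type configurations. We then use the known bound $Z_j\le 4\exp\bigl(-\pi^2 j/\log(4N)\bigr)$ in the form obtained in \cite{Karnik21}. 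That bound came from the Zolotarev number for two real intervals whose cross-ratio is governed by $N$, giving $Z_j\le 4\rho_*^{-2j}$ with $\log\rho_*\gtrsim \pi^2/(2\log(4N))$. This step is purely about $E$ and $F$, so it transfers verbatim from the one-band case once Step 1 confirms the same $C_L$ and $D$.

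\emph{Step 3 (counting).} We choose
\[
j=\ceil{\tfrac{\log(4N)}{\pi^2}\log\tfrac{4}{\eps(1-\eps)}}.
\]
Then $\sigma_{\nu j}^2\le 4e^{-\pi^2 j/\log(4N)}\le\eps(1-\eps)$. The power is squared because we bound $\sigma^2=\lambda(1-\lambda)$, which matches the constant $4$ inside the logarithm; we will recheck the exact exponent convention against \cite{Karnik21}. With this $j$, at most $\nu j$ singular values exceed $\sqrt{\eps(1-\eps)}$, and this gives~\eqref{eq:first-main}.

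The main obstacle is Step 1, together with the index bookkeeping ($1+\nu j$ versus $\nu j$) and the limit $L\to\infty$. We must make sure that the spectra of $C_L$ and $D$ stay in the fixed sets $E$ and $F$ for all $L$, so that the Zolotarev bound is independent of $L$. This requires that the unbounded part of $E$ be handled by the Möbius normalization as in \cite{Karnik21}. We also need the tail of $(I-T_N)B_ST_N$ beyond the truncation to vanish in operator norm as $L\to\infty$. This holds because the operator is Hilbert--Schmidt, with squared Hilbert--Schmidt norm $\tr(B_{N,S}-B_{N,S}^2)<\infty$.
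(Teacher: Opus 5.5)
Your strategy is the paper's: the rank-$\nu$ displacement of $X_L$, the Beckermann--Townsend bound in the form of \cite[Theorem~4]{Karnik21}, and Lemma~\ref{lem:transition} to pass from $X_L$ to $B-B^2$. The gap is in Step~2, where the constants do not combine to give \eqref{eq:first-main}.

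The correct diagonal matrices are $D=\operatorname{diag}(0,\dots,N-1)$ and $C_L=\operatorname{diag}(\ell)_{\ell\in I_L^{\mathrm{out}}}$. Your guess $\Spec(D)\subset[-(N-1),0]$ would overlap $\Spec(C_L)$. One takes $c_1=-1$, $d_1=0$, $d_2=N-1$, $c_2=N$, so $\gamma=N^2$ and $\log(16\gamma)=2\log(4N)$. The Zolotarev factor is therefore $4\exp(-\pi^2 j/(2\log(4N)))$, not the $4\exp(-\pi^2 j/\log(4N))$ you assert; your $\log\rho_*$ is too large by a factor of $2$. With the correct factor and your norm bound $\norm{X_L}\le 1$, you only get $\sigma_{\nu j+1}(X_L)^2\le 16\exp(-\pi^2 j/\log(4N))$. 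For your choice of $j$ this is at most $4\eps(1-\eps)$, not $\eps(1-\eps)$, so you would have to replace $4$ by $16$ inside the logarithm. The inequality $\sigma^2\le 4e^{-\pi^2 j/\log(4N)}$ in your Step~3 does not follow from what you wrote in Step~2.

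The missing ingredient is the sharper bound $\norm{X_L}\le 1/2$. It follows from $0\preceq X_L^*X_L\preceq B-B^2\preceq \frac14 I$, since the eigenvalues of $B-B^2$ are $\lambda_k(1-\lambda_k)\le 1/4$. Then $\sigma_{\nu j+1}(X_L)\le 2\exp(-\pi^2 j/(2\log(4N)))$. Squaring gives exactly your Step~3 inequality $\sigma_{\nu j+1}^2\le 4e^{-\pi^2 j/\log(4N)}\le\eps(1-\eps)$, now justified.

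Two smaller points. The estimate must be on $\sigma_{\nu j+1}$, not $\sigma_{\nu j}$. That is what Theorem~\ref{thm:BT} gives, and it is exactly what yields at most $\nu j$ singular values above $\sqrt{\eps(1-\eps)}$. Also, $N=1$ needs separate treatment, because Theorem~\ref{thm:BT} requires $d_1<d_2$; there $M_\eps(1,S)\le 1$ trivially.

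The rest of your argument is fine: the Gram identity, the monotone convergence of $X_L^*X_L$ to $B-B^2$, and the unbounded spectrum of $C_L$, which Theorem~\ref{thm:BT} allows uniformly in $L$.
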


For a single interval, $M=1$ and $\nu=2$, so \eqref{eq:first-main} is exactly the first bound in \cite{Karnik21}.  The next corollary states the symmetric annular case explicitly.

\begin{corollary}[Symmetric annular sets]
\label{cor:symmetric-annuli}
Let $S$ be given by \eqref{eq:annular-form}.  If $b_J<1/2$, then
\[
\nu=
\begin{cases}
4J,&a_1>0,\\
4J-2,&a_1=0.
\end{cases}
\]
Hence Theorem~\ref{thm:first-main} holds with this value of $\nu$, and the displacement rank equals $\nu$ whenever $N,L\ge\nu$.  If $b_J=1/2$, the two outer endpoints are identified on $\T$ and the boundary count is smaller by $2$.
\end{corollary}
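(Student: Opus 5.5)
The corollary reduces to a boundary count on $\T$, after which Theorems~\ref{thm:first-main} and \ref{thm:displacement-main} apply verbatim. My plan is to rewrite the symmetric set \eqref{eq:annular-form} in the form \eqref{eq:general-components}, possibly after a frequency translation, and read off the number $M$ of connected components on $\T$. Then $\nu=2M$ is exactly the quantity in both theorems. By Lemma~\ref{lem:frequency-translation}, translating changes neither the eigenvalues nor the boundary count.

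Case $b_J<1/2$, $a_1>0$. The $2J$ intervals $[-b_J,-a_J],\dots,[-b_1,-a_1],[a_1,b_1],\dots,[a_J,b_J]$ are pairwise disjoint closed intervals ordered as
\[
-\tfrac12<-b_J<-a_J<\cdots<-b_1<-a_1<0<a_1<b_1<\cdots<a_J<b_J<\tfrac12 .
\]
This is already of the form \eqref{eq:general-components} with $M=2J$, so $\nu=4J$. The $4J$ endpoints are distinct interior points of $(-1/2,1/2)$ and are genuine boundary points on $\T$. Each endpoint is adjacent to a gap of positive length: the inner gap $(-a_1,a_1)$, the gaps $(b_j,a_{j+1})$ and their mirror images, and the wraparound gap $(b_J,1-b_J)$ through $\pm1/2$.

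Case $b_J<1/2$, $a_1=0$. The two innermost intervals merge into $[-b_1,b_1]$, and the endpoint $0$ is interior to $S$. This leaves $M=2J-1$ components, so $\nu=4J-2$. The condition $0<|S|<1$ holds, since $|S|=2\sum_j(b_j-a_j)>0$ and $b_J<1/2$ leaves a gap. Hence the standing hypotheses are met, and Theorem~\ref{thm:first-main} gives \eqref{eq:first-main} with this $\nu$. Theorem~\ref{thm:displacement-main}, applied to the representation just described, gives displacement rank exactly $\nu$ for $N,L\ge\nu$.

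Case $b_J=1/2$. On $\T$ the points $-1/2$ and $1/2$ coincide, so $[-1/2,-a_J]$ and $[a_J,1/2]$ form a single arc $[a_J,1-a_J]$ modulo $1$. The point $1/2$ is then interior, the component count drops by one, and $\nu$ drops by $2$. To invoke the theorems, one translates by $1/2$ (or any shift placing the identified point inside a gap, e.g.\ inside $(-a_1,a_1)$ when $a_1>0$, or inside $(b_{J-1},a_J)$ in general). The arc becomes an ordinary interval inside $(-1/2,1/2)$ and the configuration is of the form \eqref{eq:general-components}. Some gap exists because $|S|<1$; this fails only in degenerate cases such as $J=1$, $a_1=0$, $b_1=1/2$, which must be excluded.

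The only mildly delicate point is this last case. Verifying that the translated set satisfies the strict inequalities in \eqref{eq:general-components} requires a nonempty gap, and the boundary count must be taken on $\T$ rather than in $\R$. The rest is bookkeeping.
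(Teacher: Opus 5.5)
Your proposal is correct and follows the paper's reasoning: the corollary is just the boundary count on $\T$ (merging $[-b_1,0]\cup[0,b_1]$ when $a_1=0$, and identifying $\pm1/2$ when $b_J=1/2$ via Lemma~\ref{lem:frequency-translation}, as in Remark~\ref{rem:annuli-rank}), after which Theorems~\ref{thm:displacement-main} and~\ref{thm:first-main} apply directly. The one step you leave unstated is that translation also preserves the displacement rank. This holds because the phase factors $e^{i2\pi\theta\ell}$ and $e^{i2\pi\theta n}$ are diagonal and commute with $C_L$ and $D$; alternatively, the $\pm1/2$ endpoint terms in \eqref{eq:endpoint-displacement} cancel directly at integer samples.
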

The second estimate replaces the main $\log N$ term by a
logarithm involving $N|S|$, at the expense of an additional term
depending on $M$ and $\eps$.

\begin{theorem}[Transition bound depending on $N|S|$]
\label{thm:second-main}
Let $S$ be a union of $M$ connected frequency intervals and suppose that $0<|S|\leq \frac12$.
Let $0<\eps<1/2$, define $q_\eps:=\eps(1-\eps)$, and set
\begin{equation}
 k_0:=
\ceil{
\frac{\log(8N|S|+4)}{\pi^2}
\log\!\left(\frac{5}{q_\eps}\right)},
\label{eq:k0-main}
\end{equation}
and
\begin{equation}
 k_1:=
\ceil{
\frac{1}{2\log(48/\pi)}
\log\!\left(\frac{56000M^2}{\pi q_\eps}\right)}.
\label{eq:k1-main}
\end{equation}
Then
\begin{equation}
M_\eps(N,S)\le 2M(k_0+k_1)=\nu(k_0+k_1).
\label{eq:second-main}
\end{equation}
In particular,
\begin{align}
M_\eps(N,S)
&\le
\frac{2M}{\pi^2}\log(8N|S|+4)
\log\!\left(\frac{5}{q_\eps}\right)
\notag\\
&\quad+
\frac{M}{\log(48/\pi)}
\log\!\left(\frac{56000M^2}{\pi q_\eps}\right)
+4M.
\label{eq:second-main-simple}
\end{align}
\end{theorem}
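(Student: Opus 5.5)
The plan is to adapt the second argument of Karnik, Romberg, and Davenport~\cite{Karnik21} to the multiband endpoint representation of Theorem~\ref{thm:displacement-main}. The starting identity is
\[
B_{N,S}-B_{N,S}^2=T_NB_S(I-T_N)B_ST_N=\bigl((I-T_N)B_ST_N\bigr)^{*}\bigl((I-T_N)B_ST_N\bigr).
\]
Hence $\lambda_k(1-\lambda_k)$ are the squared singular values of $(I-T_N)B_ST_N$. The number of $k$ with $\eps<\lambda_k<1-\eps$ is at most the number of singular values of this operator exceeding $\sqrt{q_\eps}$. Set $Y:=(I-T_N)B_ST_N$. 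Its columns are the tails $\sum_r\frac{e^{i2\pi\beta_r(m-n)}-e^{i2\pi\alpha_r(m-n)}}{i2\pi(m-n)}$ for $m\notin I_N$.

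I would split the rows of $Y$ into a \emph{near} block and a \emph{far} block. The near block consists of rows with $m\in\{-L',\dots,-1\}\cup\{N,\dots,N+L'-1\}$, where $L'\approx N|S|$ (up to constants). The far block consists of the remaining rows.

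\textbf{Near block.} It is a truncation $X_{L'}$ as in Section~\ref{sec:truncation}, so it satisfies the displacement relation $C_{L'}X-XD$ of rank at most $\nu$ by Theorem~\ref{thm:displacement-main}. I would feed it into the Beckermann--Townsend Zolotarev estimate exactly as in the proof of Theorem~\ref{thm:first-main}, with $N$ replaced by the relevant ratio of interval lengths. The ratio is governed by $L'$ and $N$: the nodes $C$ (time points outside the window) and $D$ (inside) are separated. Their Möbius cross-ratio is controlled by $\sim (N+L')/\text{gap}$, and after rescaling it is $O(N|S|+1)$, giving $\log(8N|S|+4)$. This yields
\[
\sigma_{\nu k_0+1}(\text{near})\le 4\,\|\,\cdot\,\|\,\rho^{-k_0}
\]
with $\log\rho\approx\pi^2/\log(8N|S|+4)$. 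Hence $\sigma_{\nu k_0+1}\le$ a fraction of $\sqrt{q_\eps}$ once $k_0$ is as in \eqref{eq:k0-main}. The constant $5$ in place of $4$ absorbs the share of the threshold reserved for the far block.

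\textbf{Far block.} For $|m-n|\gtrsim L'$, each entry is a sum of $2M$ oscillatory terms $e^{i2\pi\gamma(m-n)}/(m-n)$, with $\gamma$ running over the endpoints. I would expand $1/(m-n)$ for $m$ far from the window about a window-adapted centre. Concretely, write $1/(m-n)=\frac{1}{m-c}\sum_j\bigl(\frac{n-c}{m-c}\bigr)^j$, where $c$ is the window centre. For $|m-c|\ge 48N/(2\pi)$-type distances, this gives a geometric tail with ratio $\pi/48$. Truncating after $k_1$ terms for each of the $2M$ endpoints gives a rank $2Mk_1$ approximation. The Frobenius tail is bounded by $\sum_m|m-c|^{-2}\cdot(\pi/48)^{2k_1}$ times $M^2$. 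A cruder bound summing over endpoints gives the $56000M^2/\pi$ constant. Choosing $k_1$ as in \eqref{eq:k1-main} makes the tail at most a small fraction of $\sqrt{q_\eps}$. Weyl's inequality for singular values, $\sigma_{i+j+1}(A+B)\le\sigma_{i+1}(A)+\sigma_{j+1}(B)$, then combines the two blocks. This gives $\sigma_{\nu(k_0+k_1)+1}(Y)<\sqrt{q_\eps}$, which is \eqref{eq:second-main}. The simplified form \eqref{eq:second-main-simple} follows from $\lceil x\rceil\le x+1$.

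\textbf{Main obstacle.} The hard step is calibrating the split point $L'$ so that both bounds hold simultaneously. The near-block Zolotarev ratio must depend only on $N|S|$ rather than on $N$. However, the geometric decay in the far block naively needs $|m|\gtrsim N$, not $N|S|$.

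To reconcile these, I would first try demodulating each band. Factor $e^{i2\pi c_r(m-n)}$ out of component $r$, as in \eqref{eq:component-form}. The $r$-th term then becomes $\sin(2\pi W_r(m-n))/(\pi(m-n))$. Its smoothness in $n$ is measured on the scale $1/W_r\ge 1/|S|$, which is what lets the expansion parameter involve $N|S|$. This requires treating the near block component-by-component, with a rank-$2$ displacement per band, still totaling $\nu$. The hypothesis $|S|\le 1/2$ is what keeps the band widths bounded away from the wraparound, so the constants remain uniform.
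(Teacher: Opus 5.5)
There is a genuine gap. Your split is at the wrong scale ($L'\approx N|S|$), and the two tools are assigned to the wrong blocks. The obstacle you flag at the end is not resolved.

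\textbf{Near block.} The rows at distance $1,\dots,L'$ from $I_N$ are separated from the column indices by a gap of $1$, not by anything of size $1/|S|$. Theorem~\ref{thm:BT} with $c_1=-1$, $c_2=N$ therefore gives $\gamma=N^2$, exactly as in Theorem~\ref{thm:first-main}. A M\"obius cross-ratio is invariant under rescaling, so ``after rescaling it is $O(N|S|+1)$'' has no content. Splitting into a rank-$2$ displacement per band does not help either, since $\gamma$ depends only on the row and column index sets. Even a bounded-set Zolotarev estimate for one side, $[-L',-1]$ versus $[0,N-1]$, gives only $\gamma\le L'$. The far rows then cost $\log\bigl((N+L')/(L'+1)\bigr)$ by the same method. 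These two logarithms add up to order $\log N$ for every choice of $L'$.

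\textbf{Far block.} The expansion $1/(m-n)=\frac{1}{m-c}\sum_j\bigl(\frac{n-c}{m-c}\bigr)^j$ has ratio at most $N/(2|m-c|)$. Reaching the ratio $\pi/48$ therefore requires $|m-c|\ge 24N/\pi$, and rows at distances between $N|S|$ and about $8N$ are not controlled. Demodulation does not rescue this. For fixed far $m$, the function $n\mapsto g_{W_r}(m-n)$ oscillates about $NW_r$ times over $I_N$, so there is no smoothness in $n$ to exploit.

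\textbf{The missing idea.} Split instead at the inverse-bandwidth scale $L_1=\floor{1/(2|S|)}$ and reverse the roles of the two methods.
\begin{itemize}
\item Near rows (within distance $L_1$ of the window): demodulate each band and use Chebyshev interpolation in the \emph{row} index $\ell$ on $[-L_1,-1]$. Because $L_1\le 1/(4W_r)$, the derivative bound $|g_W^{(k)}|\le(2\pi W)^k\cdot(\ldots)$ together with the Chebyshev error formula gives a rank-$k$ approximation with Frobenius error $\sqrt{5600/\pi}\,(\pi/48)^k$. This error is independent of $N$, and it is where $k_1$ and the factor $M^2$ come from, with rank $Mk_1$ on each side.
\item Far rows (distance at least $L_1+1$): they keep displacement rank $\nu$. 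Theorem~\ref{thm:BT} with $c_1=-L_1-1$ and $c_2=N+L_1$ gives
\[
\gamma=\bigl((N+L_1)/(L_1+1)\bigr)^2\le(2N|S|+1)^2,
\]
which is the source of $\log(8N|S|+4)$ in $k_0$.
\item Combination: use $X_L^*X_L=\sum_i X_L^{(i)*}X_L^{(i)}$ and Weyl's inequality, splitting the threshold as $4q_\eps/5+q_\eps/5$.
\end{itemize}
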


For fixed $M$, the right-hand side of \eqref{eq:second-main-simple} is
$O\!\left(M\log(N|S|+1)\log\frac1\eps\right)$
with an additional term of size $O(M\log(M/\eps))$.  The first logarithmic factor is the multiband analogue of the time--bandwidth term in \cite{Karnik21}.

\subsection{Comparison with simpler frequency sets}
The rank calculation counts every endpoint. To obtain smaller bounds
when $S$ has many small gaps, we compare it with a set having fewer
boundary points. For $0<t<1/2$, set
\begin{equation}
\Phi_N(t)
:=
\ceil{
\frac{\log(4N)}{\pi^2}
\log\!\left(\frac{4}{t(1-t)}\right)
},
\label{eq:PhiN}
\end{equation}
and for $\rho\ge0$ define the effective boundary count
\begin{equation}
\nu_{\mathrm{eff}}(S;\rho)
:=
\min\Bigl\{
\#\partial_{\T}S_0:
S_0\subset\T\text{ is a finite union of intervals and }
|S\triangle S_0|\le\rho
\Bigr\}.
\label{eq:effective-boundary}
\end{equation}
We allow $S_0=\varnothing$ and $S_0=\T$, both with boundary count zero.
Since $S$ itself is admissible, the minimum in
\eqref{eq:effective-boundary} is well defined.
We also consider how the modification of a frequency set is
distributed. For a measurable set $E\subset\T$, define
\begin{equation}
\mathcal D_N(E)
:=
N\sup_{x\in\T}
\bigl|E\cap[x,x+N^{-1}]\bigr|,
\qquad
\omega_N(E)
:=
\min\{N|E|,\,2\mathcal D_N(E)\}.
\label{eq:local-density}
\end{equation}
Intervals are interpreted on the frequency torus.
The quantity $\mathcal D_N(E)$ measures the largest proportion
of an interval of length $N^{-1}$ occupied by $E$.
Related local density estimates appear in the classical
large-sieve work of Donoho and Logan~\cite{DonohoLogan1992}.

For $\eta\ge0$, define
\begin{equation}
\nu_{\mathrm{loc}}(N,S;\eta)
:=
\min\Bigl\{
\#\partial_{\T}S_0:
S_0\subset\T\text{ is a finite union of intervals},
\quad
\omega_N(S\triangle S_0)\le\eta
\Bigr\}.
\label{eq:local-effective-boundary}
\end{equation}
As before, we allow $S_0=\varnothing$ and $S_0=\T$.
Since $S_0=S$ is admissible, the minimum is well defined.

The effective boundary count $\nu_{\mathrm{eff}}(S;\rho)$
depends on the total measure of the change in the frequency
set, while $\nu_{\mathrm{loc}}(N,S;\eta)$ also takes into
account how that change is distributed at scale $N^{-1}$.

\begin{theorem}[Effective boundary bound]
\label{thm:effective-boundary}
Let $S\subset\T$ be a finite union of intervals with $0<|S|<1$, and let
$0<\eps<1/2$.  For every $\rho\ge0$ and every $0<\tau<\eps$,
\begin{equation}
M_\eps(N,S)
\le
\nu_{\mathrm{eff}}(S;\rho)\,\Phi_N(\tau)
+
\frac{N\rho}{\eps-\tau}.
\label{eq:effective-boundary-additive}
\end{equation}
In particular, taking $\tau=\eps/2$ gives
\begin{equation}
M_\eps(N,S)
\le
\nu_{\mathrm{eff}}(S;\rho)\,\Phi_N(\eps/2)
+
\frac{2N\rho}{\eps}.
\label{eq:effective-boundary-half}
\end{equation}
If, in addition, $N\rho<\eps$,  then we also have
\begin{equation}
M_\eps(N,S)
\le
\nu_{\mathrm{eff}}(S;\rho)\,\Phi_N(\eps-N\rho).
\label{eq:effective-boundary-shifted}
\end{equation}

More generally, for every $0\le\eta<\eps$,
\begin{equation}
M_\eps(N,S)
\le
\nu_{\mathrm{loc}}(N,S;\eta)
\Phi_N(\eps-\eta).
\label{eq:effective-boundary-local}
\end{equation}
Moreover, for every $\rho\ge0$,
\begin{equation}
\nu_{\mathrm{loc}}(N,S;N\rho)
\le
\nu_{\mathrm{eff}}(S;\rho).
\label{eq:local-global-boundary-comparison}
\end{equation}
\end{theorem}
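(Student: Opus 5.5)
The plan is a perturbation argument that transfers Theorem~\ref{thm:first-main} from a simpler comparison set $S_0$ to $S$. Fix an admissible $S_0$ and write $E:=S\triangle S_0$. Then $B_{N,S}-B_{N,S_0}=T_N(B_S-B_{S_0})T_N$ is self-adjoint with symbol $\1_S-\1_{S_0}$, which takes values in $\{-1,0,1\}$ and is supported on $E$. For $\Delta:=B_{N,S}-B_{N,S_0}$ and $x\in\C^N$ we have
\[
|\langle\Delta x,x\rangle|\le\int_E|\hat x(\xi)|^2\,d\xi,\qquad \hat x(\xi)=\sum_{n=0}^{N-1}x_ne^{-i2\pi n\xi}.
\]
Two estimates on this quantity then do the work.

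\emph{Trace bound.} We have $\norm{\Delta}_{S_1}\le\tr(T_NB_{S\setminus S_0}T_N)+\tr(T_NB_{S_0\setminus S}T_N)=N|E|\le N\rho$.

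\emph{Operator bound.} We need $\norm{\Delta}\le\sup_{\|x\|=1}\int_E|\hat x|^2$, and the claim is that this is at most $2\mathcal D_N(E)$. This is the large-sieve step: cover $\T$ by $N$ arcs of length $1/N$, so that $\int_E|\hat x|^2\le\sum_j|E\cap I_j|\sup_{I_j}|\hat x|^2\le N^{-1}\mathcal D_N(E)\sum_j\sup_{I_j}|\hat x|^2$. A Gallagher/Sobolev-type inequality for trigonometric polynomials of degree $<N$ then gives $\sum_j\sup_{I_j}|\hat x|^2\le N\|\hat x\|^2_{L^2}+\int|(\hat x^2)'|/(\ldots)$, which should yield $\le 2N\|x\|^2$. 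In the Donoho--Logan form, $\int_E|\hat x|^2\le \sup_t N|E\cap[t,t+1/N]|\cdot\|x\|^2$ up to a constant. I expect this constant to be the main obstacle: getting exactly $2$ requires invoking the right classical estimate, and I would cite Donoho--Logan rather than derive it. Together with the trace bound this gives $\norm{\Delta}\le\omega_N(E)$, since $\norm{\Delta}\le\norm{\Delta}_{S_1}$.

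\emph{Counting.} For \eqref{eq:effective-boundary-local}, take $S_0$ attaining $\nu_{\mathrm{loc}}$, so $\norm{\Delta}\le\eta<\eps$. By Weyl's inequality $|\lambda_k(N,S)-\lambda_k(N,S_0)|\le\eta$. Hence $\eps<\lambda_k(S)<1-\eps$ implies $\eps-\eta<\lambda_k(S_0)<1-(\eps-\eta)$, so $M_\eps(N,S)\le M_{\eps-\eta}(N,S_0)$. If $\eps-\eta<1/2$, Theorem~\ref{thm:first-main} applied to $S_0$ (after the translation of Lemma~\ref{lem:frequency-translation}) gives the bound $\#\partial_\T S_0\,\Phi_N(\eps-\eta)$. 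When $S_0=\varnothing$ or $\T$, all eigenvalues of $B_{N,S_0}$ are $0$ or $1$ and the count is zero, which is consistent with boundary count zero. For \eqref{eq:local-global-boundary-comparison}, note that $|E|\le\rho$ gives $\omega_N(E)\le N|E|\le N\rho$, so every $\rho$-admissible $S_0$ is $N\rho$-admissible. Then \eqref{eq:effective-boundary-shifted} follows from \eqref{eq:effective-boundary-local} with $\eta=N\rho$.

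For \eqref{eq:effective-boundary-additive}, use the trace bound instead of the norm bound. Split $\Delta=\Delta_+-\Delta_-$ into its positive and negative parts, with $\tr\Delta_\pm\le N\rho$ in total. An index $k$ with $\lambda_k(S)\in(\eps,1-\eps)$ either has $\lambda_k(S_0)\in(\tau,1-\tau)$, or its eigenvalue moved by more than $\eps-\tau$. The number of eigenvalues moved by more than $\eps-\tau$ is controlled by a Ky Fan / Lidskii majorization: $\sum_k|\lambda_k(S)-\lambda_k(S_0)|\le\norm{\Delta}_{S_1}\le N\rho$, so at most $N\rho/(\eps-\tau)$ indices move that far. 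The remaining indices are counted by Theorem~\ref{thm:first-main} for $S_0$ at level $\tau$. Finally, choosing $\tau=\eps/2$ gives \eqref{eq:effective-boundary-half}.
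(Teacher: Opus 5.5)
Your argument is essentially the paper's proof: Lidskii--Mirsky with the trace bound $N|S\triangle S_0|$ gives \eqref{eq:effective-boundary-additive}, Weyl's inequality with $\|B_{N,S}-B_{N,S_0}\|\le\omega_N(S\triangle S_0)$ gives \eqref{eq:effective-boundary-local}, and $\omega_N(E)\le N|E|$ gives \eqref{eq:local-global-boundary-comparison}. Your derivation of \eqref{eq:effective-boundary-shifted} from \eqref{eq:effective-boundary-local} with $\eta=N\rho$ differs from the paper, which uses the pointwise consequence of Lidskii--Mirsky, but it is equally valid.

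The constant-$2$ estimate you left to citation is the paper's Lemma~\ref{lem:local-concentration}. It is proved by writing $p=u*g$ with the Donoho--Logan cosine kernel $g$ supported on an arc of length $1/N$, for which $\widehat g(n)\ge\|g\|_2^2\ge 1/(2N)$ when $0\le n<N$; Cauchy--Schwarz and Fubini then give $\int_E|p|^2\le 2\mathcal D_N(E)\|p\|_2^2$ directly. The arc-covering/Gallagher route you sketch only gives a constant of about $1+2\pi$, which does not match the factor $2$ in the definition of $\omega_N$.
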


The estimate based only on the total measure of the change may be optimized over $\rho$ and $\tau$.
The local estimate also uses the distribution of the change on
intervals of length $N^{-1}$. The following consequence treats gaps
inside a single interval.

\begin{corollary}[Small gaps]
\label{cor:small-gaps}
Let $J\subset\T$ be an interval with $0<|J|<1$, and let $G\subset J$ be a finite union
of pairwise disjoint intervals.  Set $S=J\setminus G$, assume $0<|S|<1$, and
let $g:=|G|$.  Then, for every $0<\eps<1/2$,
\begin{equation}
M_\eps(N,S)
\le
2\Phi_N(\eps/2)+\frac{2Ng}{\eps}.
\label{eq:small-gap-additive}
\end{equation}
If $Ng<\eps$, then
\begin{equation}
M_\eps(N,S)
\le
2\Phi_N(\eps-Ng).
\label{eq:small-gap-shifted}
\end{equation}
In particular, if $Ng\le\eps/2$, then
\begin{equation}
M_\eps(N,S)
\le
2\Phi_N(\eps/2).
\label{eq:small-gap-half}
\end{equation}
More generally, put $\gamma:=\omega_N(G)$.
If $\gamma<\eps$, then
\begin{equation}
M_\eps(N,S)
\le
2\Phi_N(\eps-\gamma).
\label{eq:small-gap-local}
\end{equation}
In particular, the bound
\eqref{eq:small-gap-half}
also holds whenever
$\mathcal D_N(G)\le\frac{\eps}{4}$,
without requiring $Ng\le\eps/2$.

If $G$ consists of $m$ gaps of equal length $\delta$, then $g=m\delta$.
Thus \eqref{eq:small-gap-additive} depends on the gaps through their total
length, while \eqref{eq:small-gap-local} also takes into account how the
gaps are distributed.
\end{corollary}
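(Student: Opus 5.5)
The plan is to apply Theorem~\ref{thm:effective-boundary} with the comparison set $S_0:=J$. Since $J$ is an interval with $0<|J|<1$, its boundary on $\T$ consists of exactly two points, so $\#\partial_{\T}J=2$. Because $G\subset J$, we have $S\triangle J=J\setminus S=G$, and hence $|S\triangle J|=g$. Taking $\rho=g$, the set $J$ is admissible in \eqref{eq:effective-boundary}, and therefore $\nu_{\mathrm{eff}}(S;g)\le 2$. The hypotheses of Theorem~\ref{thm:effective-boundary} hold: $S$ is a finite union of intervals, being an interval minus finitely many disjoint intervals, and $0<|S|<1$ by assumption.

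Substituting $\nu_{\mathrm{eff}}\le2$ and $\rho=g$ into \eqref{eq:effective-boundary-half} gives \eqref{eq:small-gap-additive} directly. If $Ng<\eps$, then \eqref{eq:effective-boundary-shifted} gives \eqref{eq:small-gap-shifted}. For \eqref{eq:small-gap-half}, the hypothesis $Ng\le\eps/2$ gives $\eps-Ng\ge\eps/2$, and we use that $\Phi_N$ is nonincreasing on $(0,1/2)$. This holds because $t(1-t)$ is increasing there, so $\log(4/(t(1-t)))$ decreases, and the ceiling preserves monotonicity. Hence $\Phi_N(\eps-Ng)\le\Phi_N(\eps/2)$.

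For the local statement, set $\gamma=\omega_N(G)=\omega_N(S\triangle J)$. Then $J$ is admissible in \eqref{eq:local-effective-boundary} with $\eta=\gamma$, so $\nu_{\mathrm{loc}}(N,S;\gamma)\le2$. When $\gamma<\eps$, \eqref{eq:effective-boundary-local} yields \eqref{eq:small-gap-local}. For the final claim, suppose $\mathcal D_N(G)\le\eps/4$. Then $\gamma\le2\mathcal D_N(G)\le\eps/2<\eps$, so $\eps-\gamma\ge\eps/2$, and the monotonicity of $\Phi_N$ again gives \eqref{eq:small-gap-half}. The remark about $m$ equal gaps only records that $g=m\delta$. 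The main thing to check carefully is the boundary count of $J$ on $\T$: it is $2$, not $0$, precisely because $0<|J|<1$, so $J$ is neither empty nor all of $\T$. There is no real obstacle beyond this and the monotonicity of $\Phi_N$.
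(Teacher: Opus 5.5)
Your proposal is correct and follows the paper's own proof essentially step for step. You take $S_0=J$, note that $S\triangle J=G$ and $\#\partial_{\T}J=2$, so that $\nu_{\mathrm{eff}}(S;g)\le 2$ and $\nu_{\mathrm{loc}}(N,S;\omega_N(G))\le 2$, and read off the four bounds from Theorem~\ref{thm:effective-boundary}; the two ``in particular'' claims then follow from the monotonicity of $\Phi_N$ and from $\omega_N(G)\le 2\mathcal D_N(G)$.
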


The remaining results describe consequences for individual eigenvalues,
signal approximation, and continuous limiting operators.
Proposition~\ref{prop:crossing}, taken from
Zhu and Wakin~\cite[Theorem~3.3]{wakin2017approximating}, locates the
$1/2$ crossing within $O(\nu)$ indices of $N|S|$.
Combining this with Theorem~\ref{thm:first-main} gives exponential
bounds for $1-\lambda_k$ before the transition and for $\lambda_k$
after it (Corollary~\ref{cor:eigenvalue-decay}).
These imply uniform approximation-error bounds for sampled
multiband signals (Corollary~\ref{cor:signal-exponential}).
Section~\ref{sec:continuous} transfers the discrete estimates to the
continuous problem by discretization.

\subsection{Relation to previous work}
Theorem~\ref{thm:first-main} recovers the first one-band estimate of Karnik, Romberg, and Davenport~\cite{Karnik21}  when \(M=1\), while Theorem~\ref{thm:second-main} is a multiband analogue of their time–bandwidth-dependent estimate.
 The multiband Cauchy-like  representation    comes 
from~\cite[Sections~III and~VI]{Karnik2019Nonuniform}; the present
argument applies it to samples outside the time window and verifies
when the displacement rank equals the endpoint count.

Zhu and Wakin~\cite{wakin2017approximating} bounded the trace defect
$\sum_k\lambda_k(1-\lambda_k)$, obtaining a multiband transition
estimate with threshold dependence $1/[\eps(1-\eps)]$.
Theorem~\ref{thm:first-main} replaces this by
$\log(1/[\eps(1-\eps)])$.
Their Theorem~3.4 also gives exponential concentration for fixed
frequency sets and sufficiently large $N$, with set-dependent
constants.  Corollary~\ref{cor:eigenvalue-decay} instead gives an explicit estimate
valid for every $N$, with explicit dependence on $\nu$ and on the
distance from the $1/2$ crossing, using the argument of
\cite[Corollary~1]{Karnik21}. 

The comparison with simpler frequency sets combines these transition
bounds with standard matrix perturbation inequalities and the classical
large-sieve method of Donoho and Logan~\cite{DonohoLogan1992}.
Proposition~\ref{prop:local-global-separation} shows that the local
estimate can grow logarithmically  whereas the right-hand sides of the
corresponding total-measure bounds are of order at least $N$.

Continuous eigenvalue-distribution results include Landau and
Widom~\cite{LandauWidom80} and, for multiband localization,
Izu and Lakey~\cite{IzuLakey2009}. Recent non-asymptotic estimates
of Kulikov and Dam Larsen~\cite{Kulikov-Larsen} and
Azimifard~\cite{Azimi1} have different parameter ranges   and depend
differently on the geometry of the frequency set. The continuous results here are consequences of our
discrete estimates, not a separate method. The recent preprint
\cite{Azimifard2026MeasureSensitive}  proves related bounds for continuous localization problems, including estimates that depend on changes in the frequency set.  Our theorem, Theorem~\ref{thm:effective-boundary}, on the other hand, compares two discrete frequency sets \(S\) and \(S_0\) and also takes into account how the change is distributed over intervals of length \(N^{-1}\).

Higher-dimensional results under geometric assumptions appear
in~\cite{ArieAzita23,marceca2023,hughes2024eigenvalue}.
The trace-defect estimates of
\cite{HIM-trace-rough,Mayeli2026TraceDefect} treat more general,
including rough, geometries, but give algebraic threshold dependence.
Here the finite endpoint representation supplies the additional
singular-value decay. For further one-dimensional estimates see
\cite{Israel15}; for Cartesian sets and tensor-product
methods see~\cite{GJMP_prolateHD}.

\subsection{Organization of the paper}
Sections~\ref{sec:prelim}--\ref{sec:truncation} give the operator
identities used in the proofs. Sections~\ref{sec:displacement}
and~\ref{sec:refined} prove the two transition estimates, and
Section~\ref{sec:stability} proves the comparison results.
Sections~\ref{sec:eigenvalue-decay}--\ref{sec:continuous} give the
eigenvalue, approximation, and continuous consequences.
Section~\ref{sec:numerics} presents selected numerical examples.
Appendix~\ref{app:chebyshev} contains the scalar approximation estimate.

\section{Preliminaries and notation}
\label{sec:prelim}

Let $x\in\ell^2(\Z)$.  We use the discrete-time Fourier transform
\[
\widehat{x}(\xi)=\sum_{n\in\Z}x[n]e^{-i2\pi\xi n},
\qquad \xi\in[-1/2,1/2],
\]
with inverse
\[
x[n]=\int_{-1/2}^{1/2}\widehat{x}(\xi)e^{i2\pi\xi n}\,d\xi.
\]
With this normalization, the Fourier transform is unitary and
\[
\sum_{n\in\Z}x[n]\overline{y[n]}
=
\int_{-1/2}^{1/2}\widehat{x}(\xi)\overline{\widehat{y}(\xi)}\,d\xi.
\]

For a measurable set $S\subset[-1/2,1/2]$, let $B_S$ be the
bandlimiting projection defined by
$\widehat{B_Sx}(\xi)=\1_S(\xi)\widehat{x}(\xi)$.
Its kernel is
\begin{equation}
K_S(\tau):=\int_S e^{i2\pi\xi\tau}\,d\xi,
\qquad \tau\in\R.
\label{eq:KS-def}
\end{equation}

Thus
\begin{equation}
(B_Sx)[n]=\sum_{m\in\Z}K_S(n-m)x[m].
\label{eq:BS-convolution}
\end{equation}

With this notation, the multiband prolate matrix introduced in
Section~\ref{sec:intro} is
$B_{N,S}=\bigl(K_S(m-n)\bigr)_{m,n\in I_N}$.

\begin{lemma}[Basic properties of the kernel]
\label{lem:kernel-basic}
Let $S\subset[-1/2,1/2]$ be measurable.
Then
\begin{equation}
K_S(-\tau)=\overline{K_S(\tau)},
\label{eq:kernel-hermitian}
\end{equation}
and
\begin{equation}
\sum_{\tau\in\Z}|K_S(\tau)|^2=|S|.
\label{eq:kernel-parseval}
\end{equation}
If $S=-S$, then $K_S$ is real-valued and even.
\end{lemma}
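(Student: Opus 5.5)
The plan is to prove all three assertions directly from the definition \eqref{eq:KS-def}, since $K_S$ is just the inverse Fourier transform of $\1_S$.

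\emph{Hermitian symmetry.} Because $S$ has finite measure, $K_S(\tau)$ is an absolutely convergent integral for every real $\tau$. Conjugating under the integral sign gives
\[
\overline{K_S(\tau)}=\int_S \overline{e^{i2\pi\xi\tau}}\,d\xi=\int_S e^{-i2\pi\xi\tau}\,d\xi=K_S(-\tau),
\]
which is \eqref{eq:kernel-hermitian}.

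\emph{Parseval identity.} For integer $\tau$, $K_S(\tau)$ is the inverse transform of $\1_S\in L^2([-1/2,1/2])$ evaluated at $n=\tau$. Equivalently, it is the sequence $B_S\delta_0$, because $\widehat{\delta_0}\equiv1$ and so $\widehat{B_S\delta_0}=\1_S$. By the unitarity recorded just before the lemma,
\[
\sum_{\tau\in\Z}|K_S(\tau)|^2=\int_{-1/2}^{1/2}|\1_S(\xi)|^2\,d\xi=|S|.
\]
This is \eqref{eq:kernel-parseval}.

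\emph{Symmetric sets.} Assume $S=-S$. The substitution $\xi\mapsto-\xi$ preserves Lebesgue measure and maps $S$ onto itself, so
\[
K_S(-\tau)=\int_S e^{-i2\pi\xi\tau}\,d\xi=\int_{S} e^{i2\pi\xi\tau}\,d\xi=K_S(\tau).
\]
Hence $K_S$ is even. Combining this with \eqref{eq:kernel-hermitian} gives $K_S(\tau)=\overline{K_S(\tau)}$, so $K_S$ is real-valued.

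There is no real obstacle here. The only points to check are that Fourier inversion and Parseval apply to $\1_S\in L^2$, and that the change of variables respects the symmetry $S=-S$; with the unitary normalization fixed earlier in the section, both are routine.
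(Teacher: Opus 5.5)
Your proof is correct. The paper states this lemma without proof, treating it as a routine consequence of \eqref{eq:KS-def} and the unitary normalization of the discrete-time Fourier transform, and your three steps are the standard verification it leaves implicit: conjugating under the integral, applying Parseval to $\1_S$ (equivalently to $B_S\delta_0$, whose restriction to $\Z$ is $K_S$), and the reflection $\xi\mapsto-\xi$ when $S=-S$.
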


The projection identity $B_S^2=B_S$ gives the following kernel identity.

\begin{lemma}[Kernel identity]
\label{lem:reproducing}
For all $m,n\in\Z$,
\begin{equation}
\sum_{\ell\in\Z}
\overline{K_S(\ell-m)}K_S(\ell-n)
=K_S(m-n).
\label{eq:reproducing-general}
\end{equation}
If $S=-S$, then
\begin{equation}
\sum_{\ell\in\Z}
K_S(\ell-m)K_S(\ell-n)
=K_S(m-n).
\label{eq:reproducing-symmetric}
\end{equation}
\end{lemma}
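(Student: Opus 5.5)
The plan is to rewrite the left-hand side of \eqref{eq:reproducing-general} as an inner product in $\ell^2(\Z)$ and then apply the projection identity $B_S^2=B_S$ together with the self-adjointness of $B_S$. Write $e_n$ for the standard basis vector at index $n\in\Z$. By \eqref{eq:BS-convolution}, $(B_Se_n)[\ell]=K_S(\ell-n)$. Hence the sequence $\ell\mapsto K_S(\ell-n)$ is exactly $B_Se_n$. It lies in $\ell^2(\Z)$ by \eqref{eq:kernel-parseval}, so the series in \eqref{eq:reproducing-general} converges absolutely by Cauchy--Schwarz.

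With the inner product $\langle x,y\rangle=\sum_\ell x[\ell]\overline{y[\ell]}$, the left-hand side of \eqref{eq:reproducing-general} equals $\langle B_Se_n,B_Se_m\rangle$. Since $B_S$ is an orthogonal projection (the Fourier multiplier by the real indicator $\1_S$ is self-adjoint and idempotent, by the unitarity of the Fourier transform), we get
\[
\langle B_Se_n,B_Se_m\rangle=\langle B_S^2e_n,e_m\rangle=\langle B_Se_n,e_m\rangle=(B_Se_n)[m]=K_S(m-n).
\]
This is \eqref{eq:reproducing-general}.

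As a check, the identity can also be obtained directly by Parseval. The DTFT of $\ell\mapsto K_S(\ell-n)$ is $\1_S(\xi)e^{-i2\pi\xi n}$, so the sum equals $\int_S e^{-i2\pi\xi n}e^{i2\pi\xi m}\,d\xi=K_S(m-n)$.

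For the symmetric case, if $S=-S$ then Lemma~\ref{lem:kernel-basic} shows that $K_S$ is real-valued. The conjugation in \eqref{eq:reproducing-general} can therefore be dropped, which gives \eqref{eq:reproducing-symmetric}.

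The only point needing care is keeping the order of the conjugated and unconjugated factors consistent with the inner product convention, so that the result is $K_S(m-n)$ rather than $K_S(n-m)$. Apart from that, no step is a genuine obstacle.
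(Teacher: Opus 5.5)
Your proof is correct and is essentially the paper's argument. The paper writes the $(m,n)$ entry of $B_S^2=B_S$ as $\sum_{\ell}K_S(m-\ell)K_S(\ell-n)$ and rewrites $K_S(m-\ell)$ as $\overline{K_S(\ell-m)}$ via \eqref{eq:kernel-hermitian}, which is the kernel-level form of the self-adjointness you use in $\langle B_Se_n,B_Se_m\rangle=\langle B_S^2e_n,e_m\rangle$; the symmetric case is handled the same way, by the reality of $K_S$ from Lemma~\ref{lem:kernel-basic}.
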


\begin{proof}
The projection identity $B_S^2=B_S$ and
\eqref{eq:kernel-hermitian} give
\[
K_S(m-n)
=\sum_{\ell\in\Z}K_S(m-\ell)K_S(\ell-n)
=\sum_{\ell\in\Z}\overline{K_S(\ell-m)}K_S(\ell-n).
\]
For $S=-S$, the kernel is real by Lemma~\ref{lem:kernel-basic}.
\end{proof}

For $N\ge1$, let $T_N$ be the time-limiting projection
\[
(T_Nx)[n]=
\begin{cases}
x[n],&n\in I_N,\\
0,&n\notin I_N.
\end{cases}
\]
We identify the operator $T_NB_ST_N$ with the matrix
$B:=B_{N,S}=\bigl(K_S(m-n)\bigr)_{m,n\in I_N}$.

\begin{lemma}[Frequency translation invariance]
\label{lem:frequency-translation}
Let $\theta\in\R$ and let $S_\theta=(S+\theta)\bmod 1$.
Define the diagonal matrix $D_\theta$ by
$D_\theta[n,n]=e^{i2\pi\theta n}$ for $n\in I_N$.
Then
\begin{equation}
B_{N,S_\theta}=D_\theta B_{N,S}D_\theta^*.
\label{eq:frequency-translation}
\end{equation}
In particular, frequency translation does not change the eigenvalues of $B_{N,S}$.
\end{lemma}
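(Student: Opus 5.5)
The plan is to prove \eqref{eq:frequency-translation} entrywise and then deduce the eigenvalue statement from unitarity of $D_\theta$.

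\textbf{Step 1: the kernel of the translated set.} Since $S_\theta=(S+\theta)\bmod 1$ and the integrand $\xi\mapsto e^{i2\pi\xi(m-n)}$ is $1$-periodic for integers $m,n$, we may compute $K_{S_\theta}(m-n)$ on the torus. Translation invariance of Lebesgue measure on $\T$ and the substitution $\xi=\eta+\theta$ give
\[
K_{S_\theta}(m-n)=\int_{S+\theta}e^{i2\pi\xi(m-n)}\,d\xi
=e^{i2\pi\theta(m-n)}\int_S e^{i2\pi\eta(m-n)}\,d\eta
=e^{i2\pi\theta m}\,K_S(m-n)\,e^{-i2\pi\theta n}.
\]

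\textbf{Step 2: matrix form.} The last expression is exactly the $(m,n)$ entry of $D_\theta B_{N,S}D_\theta^*$, because $D_\theta$ is diagonal with entries $e^{i2\pi\theta m}$ and $D_\theta^*$ is diagonal with entries $e^{-i2\pi\theta n}$. This holds for all $m,n\in I_N$, which proves \eqref{eq:frequency-translation}.

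\textbf{Step 3: eigenvalues.} $D_\theta$ is unitary, since its diagonal entries have modulus one. Hence \eqref{eq:frequency-translation} is a unitary similarity, and $B_{N,S_\theta}$ and $B_{N,S}$ have the same eigenvalues with multiplicity.

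The only point needing care is Step 1: the reduction modulo $1$ must be justified by periodicity of the integrand in $\xi$, which relies on $m-n\in\Z$. Everything else is routine.
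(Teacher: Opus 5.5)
Your proposal is correct and follows the paper's proof essentially verbatim: use $1$-periodicity of $\xi\mapsto e^{i2\pi\xi t}$ for integer $t$ and a change of variables modulo $1$ to get $K_{S_\theta}(t)=e^{i2\pi\theta t}K_S(t)$, substitute $t=m-n$, and conclude by unitary similarity. Your explicit remarks on why the reduction modulo $1$ is harmless and why $D_\theta$ is unitary fill in details the paper leaves implicit.
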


\begin{proof}
For every integer $t$, the exponential $e^{i2\pi\xi t}$ is $1$-periodic in $\xi$, and a change of variables modulo $1$ gives
$K_{S_\theta}(t)=e^{i2\pi\theta t}K_S(t)$.
Substituting $t=m-n$ gives \eqref{eq:frequency-translation}.
\end{proof}

\begin{proposition}[Spectrum and trace]
\label{prop:spectrum-trace}
Assume $0<|S|<1$. Then $B$ is Hermitian and $0\prec B\prec I$.
In particular, its eigenvalues can be ordered as
$1>\lambda_0\ge\lambda_1\ge\cdots\ge\lambda_{N-1}>0$.
Moreover,
\begin{equation}
\sum_{k=0}^{N-1}\lambda_k=N|S|.
\label{eq:trace}
\end{equation}
\end{proposition}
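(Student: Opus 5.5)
The plan is to write the quadratic form of $B$ in the frequency domain and read off all three claims from it.

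For $x\in\C^N$, extend $x$ by zero to a sequence in $\ell^2(\Z)$ supported in $I_N$, and let $\widehat{x}(\xi)=\sum_{n\in I_N}x[n]e^{-i2\pi\xi n}$ be its trigonometric polynomial. Hermitian symmetry of $B$ follows at once from \eqref{eq:kernel-hermitian}: $B[n,m]=K_S(n-m)=\overline{K_S(m-n)}=\overline{B[m,n]}$. Expanding the definition of the entries and interchanging the finite sum with the integral gives
\[
\langle Bx,x\rangle=\sum_{m,n\in I_N}\overline{x[m]}\,x[n]\int_S e^{i2\pi\xi(m-n)}\,d\xi=\int_S|\widehat{x}(\xi)|^2\,d\xi .
\]
The sign convention needs one check: $\sum_n x[n]e^{-i2\pi\xi n}$ times its conjugate produces exactly the factor $e^{i2\pi\xi(m-n)}$. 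Parseval then gives $\|x\|^2=\int_{-1/2}^{1/2}|\widehat{x}|^2\,d\xi$, and therefore
\[
0\le\langle Bx,x\rangle\le\|x\|^2 .
\]

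The strict inequalities are the only substantive step. If $x\neq0$, then $\widehat{x}$ is a nonzero trigonometric polynomial of degree at most $N-1$. It is $e^{-i2\pi\xi(N-1)}$ times an ordinary polynomial in $z=e^{i2\pi\xi}$, so it has at most $N-1$ zeros on $\T$. Hence $|\widehat{x}|^2>0$ almost everywhere. Since $|S|>0$, we get $\int_S|\widehat{x}|^2>0$. Since $|\T\setminus S|=1-|S|>0$, we get $\int_{\T\setminus S}|\widehat{x}|^2>0$, which means $\langle Bx,x\rangle<\|x\|^2$. Together these give $0\prec B\prec I$.

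The spectral theorem for Hermitian matrices then yields real eigenvalues in $(0,1)$, which can be ordered as stated. For the trace, each diagonal entry is $B[n,n]=K_S(0)=|S|$, so $\tr B=N|S|$, and this equals the sum of the eigenvalues.
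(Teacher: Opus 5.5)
Your proposal is correct and follows the paper's proof essentially step for step. Both arguments use the frequency-domain quadratic form $\langle Bx,x\rangle=\int_S|\widehat{x}|^2\,d\xi$, get strictness from the fact that a nonzero trigonometric polynomial has finitely many zeros together with $|S|>0$ and $|\T\setminus S|>0$, and read off the trace from $B[n,n]=K_S(0)=|S|$; your explicit checks of Hermitian symmetry via \eqref{eq:kernel-hermitian} and of the sign convention just fill in details the paper leaves implicit.
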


\begin{proof}
For $v\in\C^N$, extend $v$ by zero outside $I_N$.  Then
\[
\langle Bv,v\rangle
=
\int_S\left|\sum_{n=0}^{N-1}v[n]e^{-i2\pi\xi n}\right|^2d\xi.
\]
It follows that $0\preceq B\preceq I$.  If $v\ne0$, the trigonometric polynomial inside the integral cannot vanish on a set of positive measure.  Since both $S$ and its complement have positive measure, the two inequalities are strict.  Finally, $B[n,n]=K_S(0)=|S|$, so $\tr(B)=N|S|$.
\end{proof}

Throughout the paper, $\|A\|$ denotes the operator norm. For matrices,
$\|A\|_F$ denotes the Frobenius norm, and for integral operators,
$\|A\|_{\mathrm{HS}}$ denotes the Hilbert--Schmidt norm.
For a matrix $A\in\C^{p\times q}$, its singular values are indexed
as $\sigma_1(A)\ge\cdots\ge\sigma_{\min\{p,q\}}(A)\ge0$.
We set $\sigma_j(A)=0$ for $j>\min\{p,q\}$.

\section{Samples outside the time window and the plunge region}
\label{sec:truncation}

The projection identity $B_S^2=B_S$ allows us to write the entries of   $B-B^2$ in terms of samples outside the time window.

\begin{lemma}[Representation of $B-B^2$]
\label{lem:BB2}
For $m,n\in I_N$,
\begin{equation}
(B-B^2)[m,n]
=
\sum_{\ell\notin I_N}
\overline{K_S(\ell-m)}K_S(\ell-n).
\label{eq:BB2-outside}
\end{equation}
\end{lemma}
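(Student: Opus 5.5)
The identity follows by computing $B^2$ entrywise and subtracting it from the full-lattice sum supplied by Lemma~\ref{lem:reproducing}. Since $B=(K_S(m-n))_{m,n\in I_N}$, for $m,n\in I_N$ we have
\[
B^2[m,n]=\sum_{\ell\in I_N}K_S(m-\ell)K_S(\ell-n)
=\sum_{\ell\in I_N}\overline{K_S(\ell-m)}\,K_S(\ell-n),
\]
where the second equality uses the Hermitian symmetry \eqref{eq:kernel-hermitian}, $K_S(m-\ell)=\overline{K_S(\ell-m)}$.

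Next, Lemma~\ref{lem:reproducing} gives
\[
B[m,n]=K_S(m-n)=\sum_{\ell\in\Z}\overline{K_S(\ell-m)}\,K_S(\ell-n).
\]
Split this sum over $\Z$ into the part with $\ell\in I_N$ and the part with $\ell\notin I_N$, and subtract the expression for $B^2[m,n]$. The terms with $\ell\in I_N$ cancel exactly, which leaves \eqref{eq:BB2-outside}.

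The only point that needs justification is splitting the infinite sum. By the Parseval identity \eqref{eq:kernel-parseval}, both sequences $\ell\mapsto K_S(\ell-m)$ and $\ell\mapsto K_S(\ell-n)$ lie in $\ell^2(\Z)$. The Cauchy--Schwarz inequality then shows that the series converges absolutely, so it may be split into the two parts without any issue. There is no real obstacle beyond this.
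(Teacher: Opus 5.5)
Your proposal is correct and follows the paper's proof: compute $B^2[m,n]$ as the sum over $\ell\in I_N$, rewrite it using the Hermitian symmetry, and subtract it from the full-lattice identity of Lemma~\ref{lem:reproducing}. Your Cauchy--Schwarz/Parseval justification is valid. It is more than needed, though, since removing finitely many terms from a convergent series requires no absolute convergence.
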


\begin{proof}
Subtract the sum over $\ell\in I_N$ from
\eqref{eq:reproducing-general}, using
\[
B^2[m,n]=\sum_{\ell\in I_N}
\overline{K_S(\ell-m)}K_S(\ell-n).
\]
\end{proof}
For $L\ge1$, define
\[
I_L^{\mathrm{out}}
:=\{-L,\dots,-1\}\cup\{N,\dots,N+L-1\}
\]
and define $X_L\in\C^{2L\times N}$ by
\begin{equation}
X_L[\ell,n]=K_S(\ell-n),
\qquad \ell\in I_L^{\mathrm{out}},\quad n\in I_N.
\label{eq:XL-def}
\end{equation}
The rows are indexed by $I_L^{\mathrm{out}}$.

\begin{proposition}[Truncation]
\label{prop:truncation}
As $L\to\infty$,
\begin{equation}
X_L^*X_L\longrightarrow B-B^2
\qquad\text{in Frobenius norm}.
\label{eq:XL-convergence}
\end{equation}
Moreover,
\begin{equation}
0\preceq X_L^*X_L\preceq B-B^2\preceq \frac14 I,
\label{eq:XL-order}
\end{equation}
and
\begin{equation}
\norm{X_L}\le\frac12.
\label{eq:XL-norm}
\end{equation}
\end{proposition}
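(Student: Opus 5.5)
The plan is to write $X_L^*X_L$ as a partial sum of the series in Lemma~\ref{lem:BB2} and let monotonicity do the rest. For $\ell\in\Z$, let $k_\ell\in\C^N$ be the vector $k_\ell[n]=K_S(\ell-n)$, $n\in I_N$. By the definition \eqref{eq:XL-def}, the rows of $X_L$ are the transposes $k_\ell^{T}$ for $\ell\in I_L^{\mathrm{out}}$. Hence
\[
X_L^*X_L=\sum_{\ell\in I_L^{\mathrm{out}}}\overline{k_\ell}\,k_\ell^{T},
\qquad
(X_L^*X_L)[m,n]=\sum_{\ell\in I_L^{\mathrm{out}}}\overline{K_S(\ell-m)}K_S(\ell-n).
\]
Comparing with \eqref{eq:BB2-outside}, the matrix $B-B^2$ is the full sum over all $\ell\notin I_N$, and $I_L^{\mathrm{out}}$ increases to $\Z\setminus I_N$ as $L\to\infty$.

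For the convergence \eqref{eq:XL-convergence}, I will argue entrywise, since all matrices are $N\times N$. The difference $(B-B^2-X_L^*X_L)[m,n]$ is the tail sum over $\ell\le -L-1$ and $\ell\ge N+L$. By Cauchy--Schwarz it is bounded by
\[
\Bigl(\sum_{|\ell|\text{ large}}|K_S(\ell-m)|^2\Bigr)^{1/2}\Bigl(\sum_{|\ell|\text{ large}}|K_S(\ell-n)|^2\Bigr)^{1/2}.
\]
Each factor tends to $0$ as $L\to\infty$, because $\sum_\tau|K_S(\tau)|^2=|S|<\infty$ by \eqref{eq:kernel-parseval}. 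Entrywise convergence of fixed-size matrices gives Frobenius convergence. The only point needing care is the absolute convergence of the series in \eqref{eq:BB2-outside}, and that is exactly what Cauchy--Schwarz together with \eqref{eq:kernel-parseval} provides.

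For the ordering \eqref{eq:XL-order}, each summand $\overline{k_\ell}k_\ell^T$ is a rank-one positive semidefinite matrix. Therefore $0\preceq X_L^*X_L$. The difference $B-B^2-X_L^*X_L$ is the Frobenius limit of finite sums of such rank-one terms, so it is also positive semidefinite, which gives $X_L^*X_L\preceq B-B^2$. Finally, $B$ is Hermitian with spectrum in $(0,1)$ by Proposition~\ref{prop:spectrum-trace}, so $B-B^2$ has eigenvalues $\lambda_k(1-\lambda_k)\le 1/4$. This gives $B-B^2\preceq\frac14 I$.

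The norm bound \eqref{eq:XL-norm} then follows directly: $\|X_L\|^2=\|X_L^*X_L\|\le\frac14$. No step seems to be a genuine obstacle.
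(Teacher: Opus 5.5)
Your proposal is correct and follows the paper's proof essentially step for step. You write the entries of $X_L^*X_L$ as partial sums of the series in Lemma~\ref{lem:BB2}, get absolute convergence from Cauchy--Schwarz and \eqref{eq:kernel-parseval}, use positivity of the omitted rows for the ordering, and finish with the eigenvalue bound $\lambda_k(1-\lambda_k)\le 1/4$; your rank-one decomposition $\sum_\ell \overline{k_\ell}\,k_\ell^{T}$ simply makes explicit the paper's remark that the omitted rows contribute a positive semidefinite matrix.
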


\begin{proof}
The entries of $X_L^*X_L$ are the sums in
\eqref{eq:BB2-outside} restricted to $I_L^{\mathrm{out}}$.
Cauchy--Schwarz and \eqref{eq:kernel-parseval} give
\[
\sum_{\ell\in\Z}|K_S(\ell-m)K_S(\ell-n)|\le|S|,
\]
so these sums converge entrywise, and hence in Frobenius norm for
fixed $N$.
The omitted rows contribute a positive semidefinite matrix, which gives
$0\preceq X_L^*X_L\preceq B-B^2$.
Since the eigenvalues of $B-B^2$ are $\lambda_k(1-\lambda_k)\le1/4$,
\eqref{eq:XL-order} and \eqref{eq:XL-norm} follow.
\end{proof}

We use the following elementary lemma, which is essentially \cite[Lemma~1]{Karnik21}.  We include the short proof for completeness.

\begin{lemma}[Transition eigenvalues]
\label{lem:transition}
Let $B$ be Hermitian with $0\preceq B\preceq I$, \(0<\varepsilon<1/2\),  and let  
$1\ge\lambda_0(B)\ge\lambda_1(B)\ge\cdots\ge\lambda_{N-1}(B)\ge0$ be the   eigenvalues of $B$.
Suppose that $X_L^*X_L\to B-B^2$ in operator norm. If, for some
integer $r\ge0$ and all sufficiently large $L$,
$\sigma_{r+1}(X_L)\le\sqrt{\eps(1-\eps)}$, then
$\#\{k:\eps<\lambda_k(B)<1-\eps\}\le r$.
\end{lemma}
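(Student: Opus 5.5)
The plan is a short spectral argument that combines singular-value continuity with Weyl's inequality for Hermitian matrices. Let $k$ be an index with $\eps<\lambda_k(B)<1-\eps$, and put $f(\lambda):=\lambda(1-\lambda)$. On $[0,1]$, $f$ is symmetric about $1/2$ and strictly increasing on $[0,1/2]$. Hence $f(\lambda)>\eps(1-\eps)$ holds exactly when $\eps<\lambda<1-\eps$. The eigenvalues of $B-B^2$ are the numbers $f(\lambda_k(B))$, since $B$ is Hermitian and $f$ is applied spectrally. Therefore the count $\#\{k:\eps<\lambda_k(B)<1-\eps\}$ equals the number of eigenvalues of $B-B^2$ that strictly exceed $q:=\eps(1-\eps)$.

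Suppose, for contradiction, that this number is at least $r+1$. Then the $(r+1)$-st largest eigenvalue of $B-B^2$ satisfies
\[
\mu_{r+1}(B-B^2)>q.
\]
Set $\delta:=\mu_{r+1}(B-B^2)-q>0$. The eigenvalues of the Hermitian matrix $X_L^*X_L$ are $\sigma_j(X_L)^2$, with the convention $\sigma_j=0$ beyond $\min\{2L,N\}$; these are listed in decreasing order, padded by zeros up to $N$. Weyl's inequality for Hermitian matrices gives
\[
\bigl|\sigma_{r+1}(X_L)^2-\mu_{r+1}(B-B^2)\bigr|
\le \norm{X_L^*X_L-(B-B^2)}.
\]
By hypothesis the right-hand side tends to $0$. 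So for all sufficiently large $L$ we have $\sigma_{r+1}(X_L)^2>q$, that is, $\sigma_{r+1}(X_L)>\sqrt{\eps(1-\eps)}$. This contradicts the assumption, which holds for all sufficiently large $L$. It follows that at most $r$ eigenvalues lie strictly between $\eps$ and $1-\eps$.

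The only delicate point is the indexing. One must check that the zero-padding convention for $\sigma_j$ matches the eigenvalue list of the $N\times N$ matrix $X_L^*X_L$, which it does. One must also keep track of strict versus non-strict inequalities. Strictness is ensured because we used the fixed gap $\delta>0$ rather than a limit argument that could only yield $\le$. Convergence in operator norm is implied by the Frobenius convergence of Proposition~\ref{prop:truncation}, so the lemma applies in the paper's setting.
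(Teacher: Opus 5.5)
Your proposal is correct and follows the same route as the paper's proof: identify the eigenvalues of $B-B^2$ as $\lambda_k(1-\lambda_k)$, use the equivalence $\eps<\lambda_k<1-\eps \iff \lambda_k(1-\lambda_k)>\eps(1-\eps)$, and pass from $X_L^*X_L$ to $B-B^2$ via operator-norm convergence and Weyl's inequality. Your version simply spells out the strict-inequality and indexing details that the paper leaves implicit.
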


\begin{proof}
The eigenvalues of $B-B^2$ are $\lambda_k(B)(1-\lambda_k(B))$.  Thus
\[
\eps<\lambda_k(B)<1-\eps
\quad\Longleftrightarrow\quad
\lambda_k(B)(1-\lambda_k(B))>\eps(1-\eps).
\]
The eigenvalues of $X_L^*X_L$ are the squared singular values of $X_L$.  The result follows from $X_L^*X_L\to B-B^2$ in operator norm and Weyl's eigenvalue inequality.
\end{proof}

\section{Displacement structure and a bound on the plunge region}
\label{sec:displacement}

We now show that the displacement rank of the matrix $X_L$, formed from
samples outside the time window $I_N$, is controlled by the number of frequency
boundary points, and is equal to this number when the matrix dimensions are
large enough.
The endpoint expansion below is the specialization
to integer samples of  the generalized Cauchy-like representation for multiband sampling
matrices  in 
\cite[Sections~III and~VI]{Karnik2019Nonuniform}.
We include the calculation to fix the notation
and to verify the exact-rank assertion in
Theorem~\ref{thm:displacement-main}.

Write $S$ as in \eqref{eq:general-components}. For $\tau\ne0$,
\begin{equation}
K_S(\tau)
=
\frac{1}{i2\pi\tau}
\sum_{r=1}^{M}
\left(e^{i2\pi\beta_r\tau}-e^{i2\pi\alpha_r\tau}\right).
\label{eq:endpoint-kernel}
\end{equation}
Define diagonal matrices
\begin{equation}
D[n,n]=n,
\qquad n\in I_N,
\label{eq:D-def}
\end{equation}
and
\begin{equation}
C_L[\ell,\ell]=\ell,
\qquad \ell\in I_L^{\mathrm{out}}.
\label{eq:C-def}
\end{equation}

\begin{proof}[Proof of Theorem~\ref{thm:displacement-main}]
Since $\ell\notin I_N$ and $n\in I_N$, we have $\ell\ne n$.  By \eqref{eq:endpoint-kernel},
\begin{align}
(C_LX_L-X_LD)[\ell,n]
&=(\ell-n)K_S(\ell-n)\notag\\
&=\frac{1}{i2\pi}
\sum_{r=1}^{M}
\left(
 e^{i2\pi\beta_r\ell}e^{-i2\pi\beta_r n}
-
 e^{i2\pi\alpha_r\ell}e^{-i2\pi\alpha_r n}
\right).
\label{eq:endpoint-displacement}
\end{align}
This is a sum of $2M=\nu$ rank-one matrices, so \eqref{eq:rank-upper-main} follows.

For the equality statement, list the endpoints as $\omega_1,\dots,\omega_\nu$ and set $z_j=e^{i2\pi\omega_j}$.  By our choice of representation of $S$, the nodes $z_j$ are distinct.
The column factors in \eqref{eq:endpoint-displacement} form a Vandermonde matrix with rows indexed by $0,\dots,N-1$, and hence have full column rank when $N\ge\nu$.
Restricting the row factors to $-\nu,\dots,-1$ gives, up to multiplication by an invertible diagonal matrix, another Vandermonde matrix with the same distinct nodes, so the row factor has full column rank when $L\ge\nu$.
The representation in \eqref{eq:endpoint-displacement} has the form $UAV^*$, where $U$ and $V$ are these two Vandermonde factors and $A$ is an invertible diagonal matrix.
Since $U$ and $V$ have full column rank, $UAV^*$ has rank $\nu$.  This proves \eqref{eq:rank-exact-main}.
\end{proof}

\begin{remark}[Boundary points  and interval representations]
\label{rem:annuli-rank}
Equation \eqref{eq:endpoint-displacement} shows why the boundary, rather than a chosen interval decomposition, is the natural parameter.  If adjacent intervals are written with a shared endpoint, the two endpoint contributions cancel.  Likewise, the points $-1/2$ and $1/2$ represent the same point of $\T$; if both occur as endpoints,  their contributions cancel after the corresponding intervals are
viewed as a single component on $\mathbb T$.   By Lemma~\ref{lem:frequency-translation}, we can translate the
frequency set and avoid this special case.
\end{remark}

We use the following result from \cite[Theorem~4]{Karnik21}, based on the displacement estimates of Beckermann and Townsend \cite{BeckermannTownsend2019}.

\begin{theorem}[Displacement singular-value bound]
\label{thm:BT}
Suppose $X\in\C^{P\times Q}$ satisfies
$CX-XD=UV^*$,
where $C$ and $D$ are normal with real spectra and $U,V$ have $\nu$ columns.  Assume
\[
\Spec(C)\subset(-\infty,c_1]\cup[c_2,\infty),
\qquad
\Spec(D)\subset[d_1,d_2],
\]
with $c_1<d_1<d_2<c_2$.  Then, for every integer $k\ge0$,
\begin{equation}
\sigma_{\nu k+1}(X)
\le
4\norm{X}
\exp\!\left(-\frac{\pi^2k}{\log(16\gamma)}\right),
\qquad
\gamma=\frac{(c_2-d_1)(d_2-c_1)}{(c_2-d_2)(d_1-c_1)}.
\label{eq:BT-bound}
\end{equation}
\end{theorem}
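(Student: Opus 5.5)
The plan is the Beckermann--Townsend rational-function argument: a low-rank displacement makes $X$ close to a rank-$\nu k$ matrix, and the approximation error is measured by a Zolotarev number for the sets $E:=[d_1,d_2]$ and $F:=(-\infty,c_1]\cup[c_2,\infty)$. The hypothesis $c_1<d_1<d_2<c_2$ says exactly that $E$ and $F$ are disjoint.

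\emph{Step 1 (rank of a polynomial displacement).} Fix $k\ge1$ and polynomials $p,q$ of degree at most $k$. Since $p(x)q(y)-q(x)p(y)$ vanishes on $x=y$, it factors as
\[
p(x)q(y)-q(x)p(y)=(x-y)\sum_{i=0}^{k-1}x^i h_i(y)
\]
for some polynomials $h_i$. The operators $Y\mapsto CY$ and $Y\mapsto YD$ commute, so this polynomial identity can be applied to $X$. Replacing $x$ by left multiplication by $C$ and $y$ by right multiplication by $D$ gives
\[
p(C)Xq(D)-q(C)Xp(D)=\sum_{i=0}^{k-1}C^i(CX-XD)h_i(D)=\sum_{i=0}^{k-1}C^iUV^*h_i(D).
\]
Each summand has rank at most $\nu$, so this matrix has rank at most $\nu k$.

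\emph{Step 2 (low-rank approximation).} Choose $p$ with no zeros on $\Spec(C)\subset F$ and $q$ with no zeros on $\Spec(D)\subset E$. Then $p(C)$ and $q(D)$ are invertible. Multiplying the identity of Step 1 on the left by $p(C)^{-1}$ and on the right by $q(D)^{-1}$ shows that
\[
X-\,(q/p)(C)\,X\,(p/q)(D)
\]
has rank at most $\nu k$. Hence
\[
\sigma_{\nu k+1}(X)\le\norm{(q/p)(C)}\,\norm{X}\,\norm{(p/q)(D)}.
\]
Because $C$ and $D$ are normal with real spectra, the spectral theorem turns the two operator norms into a ratio of suprema:
\[
\sigma_{\nu k+1}(X)\le \norm{X}\,\frac{\sup_{E}|r|}{\inf_{F}|r|},\qquad r=p/q.
\]
Taking the infimum over all rational $r$ of type $(k,k)$ bounds $\sigma_{\nu k+1}(X)$ by $\norm{X}\,Z_k(E,F)$, where $Z_k(E,F)$ is the Zolotarev number. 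Two boundary cases need a word. The unbounded set $F$ is harmless: regard it as an arc of $\R\cup\{\infty\}$ and take $p$ of exact degree $k$, so that $|r|$ stays bounded below at $\infty$. For $k=0$, choose $r\equiv1$, which gives the trivial bound $\sigma_1(X)\le\norm{X}$, stronger than the claimed $4\norm{X}$.

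\emph{Step 3 (Zolotarev estimate).} This is the main obstacle. A real M\"obius transformation $T$ preserves type-$(k,k)$ rational functions and hence Zolotarev numbers. The plan is to choose $T$ mapping the two arcs $E$ and $F$ of the projective line onto $[-b,-a]$ and $[a,b]$. The cross-ratio is invariant under $T$, and the relevant cross-ratio of the four points $c_1<d_1<d_2<c_2$ is exactly $\gamma$. For this normalized two-interval configuration, the classical Zolotarev theory via elliptic functions gives
\[
Z_k\le 4\exp\!\left(-\frac{\pi^2k}{\log(16\gamma)}\right).
\]
I would quote this explicit estimate from Beckermann--Townsend rather than rederive it; the main care is in verifying that their cross-ratio normalization matches the stated $\gamma$. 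Combining Steps 2 and 3 then gives \eqref{eq:BT-bound}.
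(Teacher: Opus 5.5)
Your argument is correct. It is the Beckermann--Townsend proof \cite{BeckermannTownsend2019} of the result that the paper does not prove but simply quotes from \cite[Theorem~4]{Karnik21}; your steps are the rank-$\nu k$ factorization of $p(x)q(y)-q(x)p(y)$, the reduction via normality to a Zolotarev ratio, and M\"obius invariance. Your normalization concern also checks out: sending $(d_1,d_2,c_2,c_1)\mapsto(-\beta,-1,1,\beta)$ gives $\gamma=(1+\beta)^2/(4\beta)$, so $4\beta\le16\gamma$, and the symmetric bound $4\exp(-\pi^2k/\log(4\beta))$ implies the stated one.
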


\begin{proof}[Proof of Theorem~\ref{thm:first-main}]
The case $N=1$ is immediate, since $M_\eps(1,S)\le1$ while the
right-hand side of \eqref{eq:first-main} is at least $\nu\ge2$.
Assume $N\ge2$. Apply Theorem~\ref{thm:BT} to $X_L$ with
$c_1=-1$, $d_1=0$, $d_2=N-1$, and $c_2=N$.
Then $\gamma=N^2$. By Proposition~\ref{prop:truncation}, $\norm{X_L}\le1/2$, so
\begin{equation}
\sigma_{\nu k+1}(X_L)
\le
2\exp\!\left(-\frac{\pi^2k}{2\log(4N)}\right),
\qquad k\ge0.
\label{eq:first-sv}
\end{equation}
Set
\[
k=
\ceil{\frac{\log(4N)}{\pi^2}
\log\!\left(\frac{4}{\eps(1-\eps)}\right)}.
\]
With this choice, \eqref{eq:first-sv} gives
$\sigma_{\nu k+1}(X_L)\le\sqrt{\eps(1-\eps)}$.

The inequality $M_\eps(N,S)\le\nu k$ follows from Lemma~\ref{lem:transition}, and the definition of $k$ gives \eqref{eq:first-main}.
\end{proof}

\section{A refined bound in terms of the time--bandwidth product}
\label{sec:refined}

We prove Theorem~\ref{thm:second-main} using the three-block decomposition
from \cite{Karnik21}. We split the rows of $X_L$ according to their distance
from the time window $I_N$. This gives one far block and two near blocks, as
shown in Figure~\ref{fig:three-block}. The far block is controlled by the
endpoint displacement rank, while each near block is written as a sum of $M$
  single-interval matrices  multiplied by diagonal phase factors.

\begin{figure}[!htbp]
\centering
\begin{tikzpicture}[x=1cm,y=1cm]

\draw[fill=gray!20]  (-5,0) rectangle (-3,0.8);
\draw[fill=blue!15]  (-3,0) rectangle (-1.5,0.8);
\draw[fill=green!15] (-1.5,0) rectangle (1.5,0.8);
\draw[fill=blue!15]  (1.5,0) rectangle (3,0.8);
\draw[fill=gray!20]  (3,0) rectangle (5,0.8);

\node at (-4,0.4) {far};
\node at (-2.25,0.4) {near};
\node at (0,0.4) {$I_N$};
\node at (2.25,0.4) {near};
\node at (4,0.4) {far};

\node[below] at (-5,0) {$-L$};
\node[below] at (-3,0) {$-L_1$};
\node[below] at (-1.5,0) {$0$};
\node[below] at (1.5,0) {$N$};
\node[below] at (3,0) {$N+L_1$};
\node[below] at (5,0) {$N+L$};

\node[above] at (-4,0.8) {$I_L^{(0)}$};
\node[above] at (-2.25,0.8) {$I_L^{(1)}$};
\node[above] at (2.25,0.8) {$I_L^{(2)}$};
\node[above] at (4,0.8) {$I_L^{(0)}$};

\end{tikzpicture}

\caption{Three-block decomposition of the outside index set used in
the proof of Theorem~\ref{thm:second-main}.
The two near blocks lie immediately outside the time window $I_N$,
while $I_L^{(0)}$ consists of the remaining outside indices.}
\label{fig:three-block}
\end{figure}

Assume $0<|S|\leq 1/2$ and set
\begin{equation}
L_1:=\floor{\frac{1}{2|S|}}.
\label{eq:L1}
\end{equation}
Then $L_1\ge1$.  For $L\ge L_1+1$, split $I_L^{\mathrm{out}}$ into
\begin{align*}
I_L^{(0)}
&:=\{-L,\dots,-L_1-1\}\cup\{N+L_1,\dots,N+L-1\},\\
I_L^{(1)}
&:=\{-L_1,\dots,-1\},\\
I_L^{(2)}
&:=\{N,\dots,N+L_1-1\}.
\end{align*}
Let $X_L^{(i)}$ be the corresponding row submatrices.  Then
\begin{equation}
X_L^*X_L
=X_L^{(0)*}X_L^{(0)}
+X_L^{(1)*}X_L^{(1)}
+X_L^{(2)*}X_L^{(2)}.
\label{eq:three-block-gram}
\end{equation}

\subsection{The far blocks}

The matrix $X_L^{(0)}$ satisfies the same displacement-rank bound $\nu=2M$. Its row indices are separated from $I_N$ by the distance $L_1+1$.

\begin{lemma}[Far blocks]
\label{lem:far-block}
Assume $N\ge2$. For every $k\ge0$,
\begin{equation}
\sigma_{\nu k+1}(X_L^{(0)})
\le
2\exp\!\left(
-\frac{\pi^2k}{2\log(8N|S|+4)}
\right).
\label{eq:far-block}
\end{equation}
\end{lemma}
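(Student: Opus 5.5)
The plan is to repeat the proof of Theorem~\ref{thm:first-main} with $X_L$ replaced by the row submatrix $X_L^{(0)}$. The only change is the spectral gap between the row nodes and the column nodes.

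First, since $X_L^{(0)}$ consists of rows of $X_L$, the displacement identity \eqref{eq:endpoint-displacement} restricts to it. Let $C_L^{(0)}$ be the diagonal matrix with entries $\ell\in I_L^{(0)}$. Then $C_L^{(0)}X_L^{(0)}-X_L^{(0)}D$ is the corresponding row restriction of a sum of $\nu$ rank-one matrices. It therefore has the form $UV^*$ with $U,V$ having $\nu$ columns. Both $C_L^{(0)}$ and $D$ are real diagonal, hence normal with real spectra. Moreover, $\norm{X_L^{(0)}}\le\norm{X_L}\le 1/2$ by Proposition~\ref{prop:truncation}, since deleting rows cannot increase the operator norm.

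Next, apply Theorem~\ref{thm:BT} with
\[
\Spec(C_L^{(0)})\subset(-\infty,-L_1-1]\cup[N+L_1,\infty),
\]
and $\Spec(D)\subset[0,N-1]$. That is, take $c_1=-L_1-1$, $d_1=0$, $d_2=N-1$, $c_2=N+L_1$. Then
\[
\gamma=\frac{(N+L_1)(N+L_1)}{(L_1+1)(L_1+1)}=\Bigl(\frac{N+L_1}{L_1+1}\Bigr)^2,
\]
so $\log(16\gamma)=2\log\bigl(4(N+L_1)/(L_1+1)\bigr)$. Theorem~\ref{thm:BT} then gives
\[
\sigma_{\nu k+1}(X_L^{(0)})\le 2\exp\!\left(-\frac{\pi^2 k}{2\log\bigl(4(N+L_1)/(L_1+1)\bigr)}\right).
\]
We may assume $k\ge1$; for $k=0$ the bound reads $\sigma_1\le 2$, which already follows from $\norm{X_L^{(0)}}\le1/2$.

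The remaining step, which is the only point needing care, is the elementary inequality
\[
\frac{N+L_1}{L_1+1}\le 2N|S|+1 .
\]
Since the exponent decreases as the logarithm decreases, replacing $\log(4(N+L_1)/(L_1+1))$ by the larger quantity $\log(8N|S|+4)$ only weakens the bound, which then yields \eqref{eq:far-block}. Write
\[
\frac{N+L_1}{L_1+1}=1+\frac{N-1}{L_1+1}.
\]
From \eqref{eq:L1} we have $L_1+1>1/(2|S|)$, so $(N-1)/(L_1+1)<2(N-1)|S|\le 2N|S|$. Hence $4(N+L_1)/(L_1+1)\le 8N|S|+4$.

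The case where $I_L^{(0)}$ meets only one side, or is empty, needs no separate treatment: the spectral inclusion above still holds, and an empty block has all singular values zero. The hypothesis $N\ge2$ guarantees $d_1<d_2$, as Theorem~\ref{thm:BT} requires. The hypothesis $L\ge L_1+1$ makes the block well defined, and the conditions $c_1<d_1$ and $d_2<c_2$ hold because $L_1\ge1$.
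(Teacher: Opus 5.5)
Your proposal is correct and follows the paper's proof: you apply Theorem~\ref{thm:BT} to $X_L^{(0)}$ with $c_1=-L_1-1$, $d_1=0$, $d_2=N-1$, $c_2=N+L_1$, use $\norm{X_L^{(0)}}\le 1/2$, and bound $(N+L_1)/(L_1+1)\le 2N|S|+1$ from $L_1+1>1/(2|S|)$. This is the same estimate the paper gets from the monotonicity of $x\mapsto (N+x)/(x+1)$, and your side remarks are harmless. Note that the one-sided or empty case you mention cannot actually occur when $L\ge L_1+1$.
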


\begin{proof}
We apply Theorem~\ref{thm:BT} with $c_1=-L_1-1$, $d_1=0$,
$d_2=N-1$, and $c_2=N+L_1$.
Then
$\gamma=\left(\frac{N+L_1}{L_1+1}\right)^2$.
Since $L_1\ge (2|S|)^{-1}-1$ and the function $x\mapsto(N+x)/(x+1)$ is nonincreasing for $N\ge1$,
\[
\gamma
\le
(2N|S|+1-2|S|)^2
\le
(2N|S|+1)^2.
\]
Also $\norm{X_L^{(0)}}\le\norm{X_L}\le1/2$.  Therefore
\[
\sigma_{\nu k+1}(X_L^{(0)})
\le
2\exp\!\left(-\frac{\pi^2k}{\log(16\gamma)}\right)
\le
2\exp\!\left(-\frac{\pi^2k}{2\log(8N|S|+4)}\right).
\]
\end{proof}

\subsection{The two blocks next to the time window}
Writing the components of $S$ as in \eqref{eq:component-form},
for each component $r$ set
$g_{W_r}(t):=\sin(2\pi W_rt)/(\pi t)$ for $t\ne0$, with
$g_{W_r}(0):=2W_r$.
Then
\begin{equation}
K_S(t)=\sum_{r=1}^{M}e^{i2\pi c_rt}g_{W_r}(t).
\label{eq:modulated-sincs}
\end{equation}
Since the intervals are disjoint,
$|S|=\sum_{r=1}^M 2W_r$,
and therefore $2W_r\le |S|$ for $1\le r\le M$. Hence
\eqref{eq:L1} implies
\begin{equation}
L_1\le\frac{1}{4W_r},
\qquad 1\le r\le M.
\label{eq:L1-Wr}
\end{equation}

We use the following one-band approximation estimate from \cite[Section~4.4]{Karnik21}.  The details are included in Appendix~\ref{app:chebyshev}.

\begin{lemma}[One-band near-boundary approximation]
\label{lem:scalar-near}
Let $0<W<1/2$, let $L_1\ge1$ satisfy $L_1\le(4W)^{-1}$, and let
\[
G[\ell,n]=\frac{\sin(2\pi W(\ell-n))}{\pi(\ell-n)},
\qquad -L_1\le\ell\le-1,
\quad 0\le n\le N-1.
\]
For every integer $k\ge1$, there exists a matrix $\widetilde G$ of rank at most $k$ such that
\begin{equation}
\norm{G-\widetilde G}_F^2
\le
\frac{5600}{\pi}\left(\frac{\pi}{48}\right)^{2k}.
\label{eq:scalar-near-bound}
\end{equation}
\end{lemma}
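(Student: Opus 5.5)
The plan is to separate the variables $\ell$ and $n$ with a Fourier integral, and then to approximate only the factor that depends on $\ell$. Since $\ell$ ranges over a short set, this factor has small phase. Write
\[
G[\ell,n]=\int_{-W}^{W}e^{i2\pi\xi\ell}\,e^{-i2\pi\xi n}\,d\xi,
\qquad -L_1\le \ell\le -1,\ 0\le n\le N-1 .
\]
For $\xi\in[-W,W]$ and $\ell\in[-L_1,-1]$, the hypothesis $L_1\le(4W)^{-1}$ gives $|2\pi\xi\ell|\le \pi/2$. So the map $\ell\mapsto e^{i2\pi\xi\ell}$ is a smooth function of $s:=\ell/L_1\in[-1,0]$ with bounded frequency $a:=2\pi\xi L_1\in[-\pi/2,\pi/2]$.

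\textbf{Step 1: Chebyshev expansion.} Use the Jacobi--Anger expansion
\[
e^{ias}=\sum_{j\ge0}\epsilon_j\, i^j J_j(a)\,T_j(s),
\]
or an affinely rescaled version adapted to $[-1,0]$, which improves the constant. Truncate it at degree $k-1$:
\[
\widetilde G[\ell,n]:=\sum_{j=0}^{k-1}T_j(\ell/L_1)\int_{-W}^{W}\epsilon_j i^j J_j(2\pi\xi L_1)\,e^{-i2\pi\xi n}\,d\xi .
\]
Each summand is an outer product of a vector in $\ell$ and a vector in $n$. Hence $\rank\widetilde G\le k$.

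\textbf{Step 2: Frobenius error via Parseval in $n$.} Let
\[
E(\xi,\ell)=e^{i2\pi\xi\ell}-\sum_{j<k}\epsilon_j i^jJ_j(2\pi\xi L_1)T_j(\ell/L_1).
\]
Then $(G-\widetilde G)[\ell,n]$ is the $n$-th Fourier coefficient of $\1_{[-W,W]}E(\cdot,\ell)$. Enlarging the range of $n$ from $I_N$ to all of $\Z$ and applying Parseval gives
\[
\norm{G-\widetilde G}_F^2\le\sum_{\ell=-L_1}^{-1}\int_{-W}^{W}|E(\xi,\ell)|^2\,d\xi\le 2WL_1\sup|E|^2\le \tfrac12\sup|E|^2 .
\]
This bound is independent of $N$, which is exactly what is needed.

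\textbf{Step 3: the uniform tail bound.} It remains to bound the Chebyshev tail uniformly. Using $|T_j|\le1$,
\[
|E|\le 2\sum_{j\ge k}|J_j(a)|,
\qquad
|J_j(a)|\le \frac{(|a|/2)^j}{j!}.
\]
With $|a|\le\pi/2$, or $|a|\le\pi/4$ after recentring the interval $[-1,0]$ to $[-1,1]$ by an affine change of variable, this yields superexponential decay. That decay can be dominated by a geometric bound of the form $C(\pi/48)^k$: factorial decay beats any fixed ratio once $k$ is moderate, and the first few values of $k$ are absorbed into $C$.

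The main obstacle is Step 3, namely obtaining the specific constants $5600/\pi$ and ratio $\pi/48$. I would first recentre the Chebyshev variable so that the effective phase is $\pi/4$. Then I would bound $\sum_{j\ge k}(\pi/8)^j/j!$ by its first term times a geometric factor. Finally I would compare $(\pi/8)^k/k!$ with $(\pi/48)^k$ for all $k\ge1$, tracking the constant, and adjust the constant if the crude comparison is lossy for small $k$. The structural part of the argument, Steps 1 and 2, is routine; the numerical bookkeeping is the delicate part.
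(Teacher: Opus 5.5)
\textbf{Gap: the stated constant is not reached.} Your construction is sound, and it is a genuinely different route from the paper's. You separate variables via $G[\ell,n]=\int_{-W}^{W}e^{i2\pi\xi\ell}e^{-i2\pi\xi n}\,d\xi$, expand only the $\ell$-factor by Jacobi--Anger in the recentred variable $u=2\ell/L_1+1$ (which gives rank $\le k$), and get $N$-independence from Parseval over $n\in\Z$. The paper instead interpolates $t\mapsto g_W(t-n)$ at Chebyshev nodes on $[-L_1,-1]$, uses the derivative bound $\min\{2W/(k+1),2/(\pi|t|)\}$, and splits the $n$-sum at $N_1$.

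The problem is in the bookkeeping you defer, and it is not cosmetic. The constants $5600/\pi$ and $\pi/48$ are fixed by the statement (they propagate into $k_1$ in Theorem~\ref{thm:second-main}), so small $k$ cannot be absorbed into $C$, and the chain you describe does not reach them. After recentring, $|b|=|\pi\xi L_1|\le\pi/4$. Step~3 gives $\sup|E|\le 2c_k(\pi/8)^k/k!$ with $c_k=(1-\frac{\pi}{8(k+1)})^{-1}$, and Step~2 then gives $\|G-\widetilde G\|_F^2\le 2c_k^2(\pi/8)^{2k}/(k!)^2$. The target is $\frac{5600}{\pi}(\pi/8)^{2k}/36^k$. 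You would therefore need $2c_k^2\,36^k/(k!)^2\le 5600/\pi\approx1.78\times10^3$. This is false for every $3\le k\le 8$, even with $c_k$ replaced by $1$; at $k=5$, $2\cdot 36^5/(5!)^2\approx 8.4\times10^3$.

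Sharper Bessel estimates cannot fix this. If $4WL_1=1$, the tail at $\xi=\pm W$, $\ell=-L_1$ (so $u=-1$) has modulus about $2J_k(\pi/4)\approx 2(\pi/8)^k/k!$. The loss is in Step~2, where $\int_{-W}^{W}|E(\xi,\ell)|^2\,d\xi$ is replaced by $2W\sup|E|^2$.

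\textbf{Repair within your framework.} Keep the $\xi$-dependence. Since $j!\ge k!(k+1)^{j-k}$, the tail satisfies $|E(\xi,\ell)|\le 2c_k(\pi L_1|\xi|/2)^k/k!$. Also $\int_{-W}^{W}|\xi|^{2k}\,d\xi=2W^{2k+1}/(2k+1)$. Summing over the $L_1$ rows and using $4WL_1\le 1$ gives
\[
\|G-\widetilde G\|_F^2\le\frac{2c_k^2}{2k+1}\,\frac{(\pi/8)^{2k}}{(k!)^2}\le\frac{8}{\pi(k+1)}\,\frac{(\pi/8)^{2k}}{(k!)^2}.
\]
The second inequality holds because $c_k^2\le c_1^2<1.55<6/\pi\le 4(2k+1)/(\pi(k+1))$. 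This is the paper's intermediate bound; the factor $1/(2k+1)$ from the $\xi$-integral plays the role of the $1/(k+1)$ in its derivative estimate.

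To finish, use the elementary inequality $(k+1)(k!)^2\ge 6^{2k}/700$ for $k\ge 1$. Consecutive terms of $700(k+1)(k!)^2/36^k$ have ratio $(k+1)(k+2)/36$, so the minimum is at $k=5$, where the value exceeds $1$. This yields \eqref{eq:scalar-near-bound}. With this change your argument is complete and somewhat cleaner than the paper's, since Parseval replaces both the $1/|t|$ derivative estimate and the splitting of the sum over $n$.
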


\begin{lemma}[Near blocks]
\label{lem:near-blocks}
For every integer $k\ge1$,
\begin{equation}
\sigma_{Mk+1}(X_L^{(1)})
\le
M\sqrt{\frac{5600}{\pi}}
\left(\frac{\pi}{48}\right)^k.
\label{eq:near-left}
\end{equation}
The same estimate holds for $X_L^{(2)}$.
\end{lemma}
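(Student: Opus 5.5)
The plan is to use the modulated-sinc decomposition \eqref{eq:modulated-sincs}, apply the one-band estimate of Lemma~\ref{lem:scalar-near} to each component separately, and combine the pieces by subadditivity of rank and the triangle inequality.

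\emph{Step 1: splitting $X_L^{(1)}$.} For $\ell\in I_L^{(1)}$ and $n\in I_N$, \eqref{eq:modulated-sincs} gives
\[
X_L^{(1)}[\ell,n]=\sum_{r=1}^{M}e^{i2\pi c_r\ell}\,g_{W_r}(\ell-n)\,e^{-i2\pi c_r n}.
\]
Here $\ell-n\le -1$, so $\ell\ne n$ and $g_{W_r}(\ell-n)=\sin(2\pi W_r(\ell-n))/(\pi(\ell-n))$. This yields
\[
X_L^{(1)}=\sum_{r=1}^M P_rG_rQ_r^*,
\]
where $G_r$ is the matrix $G$ of Lemma~\ref{lem:scalar-near} with $W=W_r$. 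The factors $P_r=\mathrm{diag}(e^{i2\pi c_r\ell})_{\ell\in I_L^{(1)}}$ and $Q_r=\mathrm{diag}(e^{i2\pi c_r n})_{n\in I_N}$ are unitary diagonal matrices.

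\emph{Step 2: checking the hypotheses.} We need $0<W_r<1/2$, which holds because $2W_r\le|S|\le1/2$. We also need $1\le L_1\le (4W_r)^{-1}$, which is exactly \eqref{eq:L1-Wr}.

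\emph{Step 3: the low-rank approximant.} Lemma~\ref{lem:scalar-near} gives $\widetilde G_r$ of rank at most $k$ with
\[
\|G_r-\widetilde G_r\|_F\le \sqrt{5600/\pi}\,(\pi/48)^k.
\]
Set $Y=\sum_r P_r\widetilde G_rQ_r^*$. Then $\rank Y\le Mk$. Since multiplication by unitary matrices preserves the Frobenius norm, the triangle inequality gives
\[
\|X_L^{(1)}-Y\|_F\le M\sqrt{5600/\pi}\,(\pi/48)^k.
\]

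\emph{Step 4: conclusion for $X_L^{(1)}$.} The Eckart--Young (Weyl) inequality gives
\[
\sigma_{Mk+1}(X_L^{(1)})\le \|X_L^{(1)}-Y\|\le\|X_L^{(1)}-Y\|_F,
\]
which is \eqref{eq:near-left}.

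\emph{Step 5: the block $X_L^{(2)}$.} This is the step needing the most care, since Lemma~\ref{lem:scalar-near} is stated only for the left block. I would reduce to it by symmetry. Write $\ell=N-1-\ell'$ with $\ell'\in\{-L_1,\dots,-1\}$, and $n=N-1-n'$ with $n'\in I_N$. Then $\ell-n=-(\ell'-n')$.

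The function $t\mapsto\sin(2\pi Wt)/(\pi t)$ is even, so $g_{W_r}(\ell-n)=g_{W_r}(\ell'-n')$. Hence each sinc block of $X_L^{(2)}$ equals $G_r$ after reversing the order of the rows and of the columns. These reversals are permutation matrices, and the modulation factors remain unitary diagonal matrices.

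The argument of Steps 3 and 4 then applies verbatim and gives the same bound for $X_L^{(2)}$.
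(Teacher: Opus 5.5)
Your proposal is correct and follows the paper's proof: the same modulated-sinc splitting, a per-component application of Lemma~\ref{lem:scalar-near}, subadditivity of rank, and unitary invariance with Eckart--Young. The only difference is minor and concerns $X_L^{(2)}$. The paper observes, via $K_S(-t)=\overline{K_S(t)}$, that $X_L^{(2)}$ is a row-and-column permutation of $\overline{X_L^{(1)}}$ and hence has the same singular values. You instead use the evenness of each $g_{W_r}$ and rerun the approximation argument on the reversed blocks, which works equally well.
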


\begin{proof}
For the $r$-th component, let $G_r$ be the matrix in Lemma~\ref{lem:scalar-near}
corresponding to  the centered interval $[-W_r,W_r]$.   Let $D_{r,\mathrm{row}}$ and $D_{r,\mathrm{col}}$ be diagonal unitary matrices with entries $e^{i2\pi c_r\ell}$ and $e^{i2\pi c_r n}$, respectively.  The contribution of the $r$-th component to $X_L^{(1)}$ is
$D_{r,\mathrm{row}}G_rD_{r,\mathrm{col}}^*$.
By \eqref{eq:L1-Wr} and Lemma~\ref{lem:scalar-near}, there is a rank-$k$ approximation $\widetilde G_r$ satisfying \eqref{eq:scalar-near-bound}.  Define
\[
\widetilde X_L^{(1)}
:=
\sum_{r=1}^{M}
D_{r,\mathrm{row}}\widetilde G_rD_{r,\mathrm{col}}^*.
\]
Then
$\rank(\widetilde X_L^{(1)})\le Mk$,
and, by unitary invariance and the triangle inequality,
\[
\norm{X_L^{(1)}-\widetilde X_L^{(1)}}_F
\le
\sum_{r=1}^{M}\norm{G_r-\widetilde G_r}_F
\le
M\sqrt{\frac{5600}{\pi}}
\left(\frac{\pi}{48}\right)^k.
\]
Since $\operatorname{rank}(\widetilde X_L^{(1)})\le Mk$, the
Eckart--Young theorem and $\|A\|\le \|A\|_F$
  give \eqref{eq:near-left}.

To compare the two near blocks, use the index changes
$\ell\mapsto N-1-\ell$ and $n\mapsto N-1-n$, which reverse the row and
column orders. If $\ell\in\{-L_1,\dots,-1\}$ and $n\in I_N$, then
\[
K_S\bigl((N-1-\ell)-(N-1-n)\bigr)
=K_S(n-\ell)
=\overline{K_S(\ell-n)}.
\]
Thus $X_L^{(2)}$ is obtained from $\overline{X_L^{(1)}}$ by row and column permutations.  Complex conjugation and permutations preserve singular values, so $X_L^{(2)}$ satisfies the same estimate.
\end{proof}

\subsection{Proof of the refined transition estimate}

\begin{proof}[Proof of Theorem~\ref{thm:second-main}]
The case $N=1$ is immediate, since $M_\eps(1,S)\le1$ while $2M(k_0+k_1)\ge4$. Assume $N\ge2$.  By \eqref{eq:three-block-gram} and Weyl's inequality for sums of Hermitian matrices,
\begin{equation}
\sigma_{2Mk_0+2Mk_1+1}(X_L)^2
\le
\sigma_{2Mk_0+1}(X_L^{(0)})^2
+
\sigma_{Mk_1+1}(X_L^{(1)})^2
+
\sigma_{Mk_1+1}(X_L^{(2)})^2.
\label{eq:weyl-combined}
\end{equation}
Since $\nu=2M$, by  Lemmas~\ref{lem:far-block} and \ref{lem:near-blocks} we have
\begin{equation}
\sigma_{2M(k_0+k_1)+1}(X_L)^2
\le
4\exp\!\left(-\frac{\pi^2k_0}{\log(8N|S|+4)}\right)
+
\frac{11200M^2}{\pi}
\left(\frac{\pi}{48}\right)^{2k_1}.
\label{eq:combined-sv}
\end{equation}
By the definition of $k_0$ in \eqref{eq:k0-main}, the first term is at most $4q_\eps/5$, and by the definition of $k_1$ in \eqref{eq:k1-main}, the second term is at most $q_\eps/5$.  Hence
\[
\sigma_{2M(k_0+k_1)+1}(X_L)^2\le q_\eps.
\]
Lemma~\ref{lem:transition} gives \eqref{eq:second-main}.  Finally, applying $\ceil{x}\le x+1$ to \eqref{eq:k0-main} and \eqref{eq:k1-main} gives \eqref{eq:second-main-simple}.
\end{proof}

\begin{remark}[Large frequency-set measure]
\label{rem:complement}
If $|S|\ge1/2$, apply Theorem~\ref{thm:second-main} to
$S^c=[-1/2,1/2]\setminus S$ whenever $0<|S^c|<1/2$. Since $S$ and
$S^c$ partition the frequency interval,
$B_{N,S^c}=I-B_{N,S}$.
Because the eigenvalues are listed in decreasing order,
$\lambda_k(N,S^c)=1-\lambda_{N-1-k}(N,S)$.
Thus the two transition counts are equal.  The number of components in Theorem~\ref{thm:second-main} is then the number of connected components of $S^c$ on $\T$.
\end{remark}

\section{Stability under perturbations of the frequency set}
\label{sec:stability}

The displacement-rank argument counts all boundary points of the frequency set $S$, no matter
how close they are. We now use standard perturbation
inequalities for Hermitian matrices to compare the spectra associated with
two nearby frequency sets; see, for example, \cite[Chapters~III and~VI]{Bhatia1997}.
For the prolate matrices considered here, the perturbation can be controlled
directly by the measure of the symmetric difference of the two frequency
sets. Combining this observation with Theorem~\ref{thm:first-main}  allows us to obtain bounds by replacing the frequency set with a simpler one having fewer boundary points.
For sets $A$ and $B$, we write $A\triangle B$ for their symmetric
difference and $|A\triangle B|$ for its measure.

\begin{proposition}[Stability under frequency perturbations]
\label{prop:frequency-set-stability}
Let $S,S_0\subset\T$ be measurable sets, set
$\delta:=|S\triangle S_0|$, and let $0<\eps<1/2$.
Then, for every $0<\tau<\eps$,
\begin{equation}
M_\eps(N,S)
\le
M_\tau(N,S_0)
+
\frac{N\delta}{\eps-\tau}.
\label{eq:set-perturbation-plunge-additive}
\end{equation}
Moreover, if $N\delta<\eps$, then
\begin{equation}
M_\eps(N,S)
\le
M_{\eps-N\delta}(N,S_0).
\label{eq:set-perturbation-plunge-shifted}
\end{equation}
\end{proposition}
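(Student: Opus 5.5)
The plan is to write $B_{N,S}=B_{N,S_0}+E$ and to control the Hermitian perturbation $E:=B_{N,S}-B_{N,S_0}$ in two norms: the operator norm and the trace norm. For $v\in\C^N$, extended by zero outside $I_N$, write $p_v(\xi)=\sum_n v[n]e^{-i2\pi\xi n}$. Then
\[
\langle Ev,v\rangle=\int_{\T}(\1_S-\1_{S_0})|p_v|^2\,d\xi .
\]
Hence $E=E_+-E_-$, where $E_+:=B_{N,S\setminus S_0}$ and $E_-:=B_{N,S_0\setminus S}$ are both positive semidefinite. Their traces are $N|S\setminus S_0|$ and $N|S_0\setminus S|$, so
\[
\|E\|_1\le \tr E_++\tr E_-=N\delta .
\]
The operator norm satisfies $\|E\|\le\|E\|_1\le N\delta$. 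The pointwise bound $|p_v|^2\le N\|v\|^2$ gives the same estimate $\|E\|\le N\delta$ directly.

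\textbf{Shifted bound \eqref{eq:set-perturbation-plunge-shifted}.} Weyl's inequality gives $|\lambda_k(N,S)-\lambda_k(N,S_0)|\le\|E\|\le N\delta$ for every $k$, with eigenvalues in decreasing order. Suppose $\eps<\lambda_k(N,S)<1-\eps$. Then
\[
\eps-N\delta<\lambda_k(N,S_0)<1-\eps+N\delta .
\]
When $N\delta<\eps$, the number $\eps-N\delta$ lies in $(0,1/2)$. The index $k$ is therefore counted in $M_{\eps-N\delta}(N,S_0)$. Since the map $k\mapsto k$ is injective, the inequality follows.

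\textbf{Additive bound \eqref{eq:set-perturbation-plunge-additive}.} The key tool is a trace-norm counting argument, for which I would use the Lidskii--Mirsky inequality
\[
\sum_k|\lambda_k(N,S)-\lambda_k(N,S_0)|\le\|E\|_1\le N\delta ,
\]
see \cite{Bhatia1997}. Split the indices $k$ with $\eps<\lambda_k(N,S)<1-\eps$ into two classes.
\begin{itemize}
\item Those with $\tau<\lambda_k(N,S_0)<1-\tau$: there are at most $M_\tau(N,S_0)$ of them.
\item The rest: for these, $\lambda_k(N,S_0)\le\tau$ or $\lambda_k(N,S_0)\ge1-\tau$. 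In either case $|\lambda_k(N,S)-\lambda_k(N,S_0)|>\eps-\tau$.
\end{itemize}
Summing over the second class, the number of such indices is at most $N\delta/(\eps-\tau)$. Adding the two counts gives \eqref{eq:set-perturbation-plunge-additive}.

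The only step that is not immediate is the trace-norm eigenvalue matching, i.e.\ the Lidskii--Mirsky step with eigenvalues paired in the same decreasing order. This is a standard result for Hermitian matrices, so I do not expect real difficulty. The one point that needs care is that $\|E\|_1$ is bounded by the sum of the two positive traces. This holds by the triangle inequality for the trace norm applied to $E=E_+-E_-$.
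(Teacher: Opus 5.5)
Your proposal is correct and follows the paper's proof essentially step for step: the same splitting $B_{N,S}-B_{N,S_0}=B_{N,S\setminus S_0}-B_{N,S_0\setminus S}$, the trace-norm bound $N\delta$, Lidskii--Mirsky, and the same two-class counting for \eqref{eq:set-perturbation-plunge-additive}. The only cosmetic difference is in \eqref{eq:set-perturbation-plunge-shifted}: you obtain the per-index bound $|\lambda_k(N,S)-\lambda_k(N,S_0)|\le N\delta$ from Weyl's inequality with $\|E\|\le N\delta$, whereas the paper reads it off directly from the Lidskii--Mirsky sum.
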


\begin{proof}
Set $E_+=S\setminus S_0$ and $E_-=S_0\setminus S$.
Then
$B_{N,S}-B_{N,S_0}=B_{N,E_+}-B_{N,E_-}$.
Since $B_{N,E}\succeq0$ and
$\|B_{N,E}\|_1=\tr(B_{N,E})=N|E|$,
where $\|\cdot\|_1$ denotes the trace norm, we have
\begin{equation}
\|B_{N,S}-B_{N,S_0}\|_1
\le N|E_+|+N|E_-|=N\delta.
\label{ineq:l1-Ndelta}
\end{equation}
The Lidskii--Mirsky inequality for Hermitian matrices
\cite[Chapter~III]{Bhatia1997} gives
\begin{equation}
\sum_{k=0}^{N-1}
\bigl|\lambda_k(N,S)-\lambda_k(N,S_0)\bigr|
\le N\delta.
\label{eq:set-perturbation-eigs-l1}
\end{equation}

To prove \eqref{eq:set-perturbation-plunge-additive},  if $M_\eps(N,S)\le M_\tau(N,S_0)$, then the result is immediate.
Otherwise, 
let
$\mathcal I=\{k:\eps<\lambda_k(N,S)<1-\eps\}$, so that
$\#\mathcal I=M_\eps(N,S)$.
At most $M_\tau(N,S_0)$ of these indices satisfy
$\lambda_k(N,S_0)\in(\tau,1-\tau)$.
For each remaining index,
$|\lambda_k(N,S)-\lambda_k(N,S_0)| \ge\eps-\tau$.
Summing over these remaining indices and using
\eqref{eq:set-perturbation-eigs-l1} gives
\[
\bigl(M_\eps(N,S)-M_\tau(N,S_0)\bigr)
(\eps-\tau)
\le N\delta,
\]
which proves \eqref{eq:set-perturbation-plunge-additive}.

For the second estimate,
\eqref{eq:set-perturbation-eigs-l1} implies
$|\lambda_k(N,S)-\lambda_k(N,S_0)| \le N\delta$
for every $k$.
If $N\delta<\eps$, every index counted by
$M_\eps(N,S)$ therefore satisfies
\[
\eps-N\delta
<
\lambda_k(N,S_0)
<
1-(\eps-N\delta).
\]
This proves \eqref{eq:set-perturbation-plunge-shifted}.
\end{proof}

\subsection{A local estimate for the operator norm}
The upper bound in Lemma~\ref{lem:local-concentration} follows by
adapting the convolution argument of Donoho and
Logan~\cite{DonohoLogan1992}, using their cosine kernel on the torus.
The lower bound follows by testing with a normalized Dirichlet kernel.
We include the details to fix the normalization.

\begin{lemma}[Local concentration estimate]
\label{lem:local-concentration}
For every measurable $E\subset\T$,
\begin{equation}
\frac{4}{\pi^2}\mathcal D_N(E)
\le
\|B_{N,E}\|
\le
\omega_N(E).
\label{eq:local-concentration-norm}
\end{equation}
Consequently, for measurable $S,S_0\subset\T$,
\begin{equation}
\|B_{N,S}-B_{N,S_0}\|
\le
\omega_N(S\triangle S_0).
\label{eq:local-perturbation-norm}
\end{equation}
\end{lemma}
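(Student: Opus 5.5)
The plan is to work with the quadratic form from the proof of Proposition~\ref{prop:spectrum-trace}. For $v\in\C^N$ with $\norm{v}=1$ we have
\[
\langle B_{N,E}v,v\rangle=\int_E|p(\xi)|^2\,d\xi,
\qquad
p(\xi)=\sum_{n=0}^{N-1}v[n]e^{-i2\pi\xi n},
\qquad
\int_{\T}|p|^2=1 .
\]
Since $B_{N,E}\succeq0$, its norm is the supremum of this form over unit $v$. The lower bound and both upper bounds then become statements about trigonometric polynomials of degree less than $N$.

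\emph{Lower bound.} I would test with a normalized Dirichlet vector $v[n]=N^{-1/2}e^{i2\pi x_0n}$. Then
\[
|p(\xi)|^2=\frac{1}{N}\Bigl(\frac{\sin(\pi N(\xi-x_0))}{\sin(\pi(\xi-x_0))}\Bigr)^2 .
\]
For $|t|\le 1/(2N)$, concavity gives $\sin(\pi N|t|)\ge 2N|t|$, and also $|\sin\pi t|\le\pi|t|$. Hence $|p|^2\ge 4N/\pi^2$ on the arc $[x_0-\tfrac1{2N},x_0+\tfrac1{2N}]$. I would choose $x_0$ as the centre of an arc of length $N^{-1}$ that (nearly) attains the supremum in $\mathcal D_N(E)$. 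Then $\norm{B_{N,E}}\ge\frac{4N}{\pi^2}|E\cap[x,x+N^{-1}]|$, and taking the supremum gives $\frac4{\pi^2}\mathcal D_N(E)$.

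\emph{Upper bound $N|E|$.} Cauchy--Schwarz gives $|p(\xi)|^2\le N\sum|v[n]|^2=N$ pointwise, so $\int_E|p|^2\le N|E|$. This is also just $\norm{B_{N,E}}\le\tr B_{N,E}$.

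\emph{Upper bound $2\mathcal D_N(E)$.} This is the main obstacle; I would follow Donoho--Logan.
\begin{itemize}
\item First multiply $p$ by a unimodular exponential to get $q$ with $|q|=|p|$ and frequencies in $\{-(N-1)/2,\dots,(N-1)/2\}$. For $N$ even these are half-integers, which is harmless because only $|q|^2$ enters and it is $1$-periodic.
\item Use the cosine kernel $h(x)=N\cos(\pi Nx)\1_{\{|x|\le 1/(2N)\}}$ on $\T$. It satisfies $h\ge0$, $h\le N$, and
\[
\widehat h(m)=\frac{2}{\pi}\,\frac{\cos(\pi m/N)}{1-4m^2/N^2}>0
\quad\text{for } |m|\le (N-1)/2 .
\]
\item Write $q=g*h$ with $\widehat g(m)=\widehat q(m)/\widehat h(m)$. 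Weighted Cauchy--Schwarz gives
\[
|q(\xi)|^2\le\Bigl(\int h\Bigr)\int|g(\xi-x)|^2h(x)\,dx .
\]
\item Integrating over $E$ and using Fubini,
\[
\int_E|q|^2\le\Bigl(\int h\Bigr)\,\sup_y\int_E h(\xi-y)\,d\xi\;\norm{g}_2^2 ,
\]
where $\sup_y\int_E h(\cdot-y)\le N\sup_x|E\cap[x,x+N^{-1}]|=\mathcal D_N(E)$.
\item Finally, $\norm{g}_2^2\le\norm{q}_2^2/\min_m\widehat h(m)^2$.
\end{itemize}
The delicate part is the constant. A crude minimum of $\widehat h$ at the edge frequency gives roughly $8/\pi$ instead of $2$. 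So I expect to need the sharper Donoho--Logan arrangement: either apply Cauchy--Schwarz with the roles of $h$ and the multiplier redistributed, or bound $|q|^2$ directly through the kernel $h*\tilde h$ acting on $|q|^2$, whose frequencies lie in $\{-(N-1),\dots,N-1\}$. The aim is to reach exactly $2\mathcal D_N(E)$. Combining the two upper bounds gives $\norm{B_{N,E}}\le\omega_N(E)$.

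\emph{Consequence \eqref{eq:local-perturbation-norm}.} As in Proposition~\ref{prop:frequency-set-stability}, write $B_{N,S}-B_{N,S_0}=B_{N,E_+}-B_{N,E_-}$ with $E_+=S\setminus S_0$ and $E_-=S_0\setminus S$. Both terms are positive semidefinite, so
\[
-B_{N,E_-}\preceq B_{N,S}-B_{N,S_0}\preceq B_{N,E_+},
\]
and the norm is at most $\max(\norm{B_{N,E_+}},\norm{B_{N,E_-}})$. Since $E_\pm\subset S\triangle S_0$ and both $N|E|$ and $\mathcal D_N(E)$ are monotone in $E$, the upper bound already proved gives $\omega_N(S\triangle S_0)$.
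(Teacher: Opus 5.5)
\textbf{There is a genuine gap: your argument proves the upper bound only with constant $8/\pi$, not $2$.}

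Your lower bound is correct, and so is the trace bound $\|B_{N,E}\|\le N|E|$. The deduction of \eqref{eq:local-perturbation-norm} is also correct; via $-B_{N,E_-}\preceq B_{N,S}-B_{N,S_0}\preceq B_{N,E_+}$ and monotonicity of $\omega_N$ it is even slightly sharper than needed. The gap is the bound $\|B_{N,E}\|\le 2\mathcal D_N(E)$. As you observe yourself, the weighted Cauchy--Schwarz step $|q(\xi)|^2\le(\int h)\int|g(\xi-x)|^2h(x)\,dx$, combined with $h\le N$ and $\min_m\widehat h(m)\ge\frac12$, gives only $\frac{2/\pi}{1/4}\mathcal D_N(E)=\frac{8}{\pi}\mathcal D_N(E)$. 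The two repairs you suggest are not carried out. Since $\omega_N(E)$ is defined with the constant $2$, and that constant is used later (e.g.\ $\omega_N(G)\le 4a=\eps/2$ in Proposition~\ref{prop:local-global-separation}), the proposal as written does not establish \eqref{eq:local-concentration-norm}.

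\textbf{The missing step} is to use the \emph{unweighted} Cauchy--Schwarz inequality over the support of the kernel:
\[
|q(\xi)|^2\le\|h\|_2^2\int_{|x|\le 1/(2N)}|g(\xi-x)|^2\,dx .
\]
With your kernel, $\|h\|_2^2=N/2$, so Fubini gives $\int_E|q|^2\le\frac12\mathcal D_N(E)\|g\|_2^2$. The factor $\frac12$ replaces your $\frac2\pi$.

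The lower bound on the multiplier then follows from a pointwise sign check rather than an asymptotic estimate. For $|m|\le N/2$,
\[
\widehat h(m)-\tfrac12
=2N\int_{-1/(2N)}^{1/(2N)}\cos(\pi Nx)\,\sin\bigl(\pi(\tfrac N2+m)x\bigr)\sin\bigl(\pi(\tfrac N2-m)x\bigr)\,dx\ge0,
\]
because $\cos(\pi Nx)\ge0$ and both sines have the sign of $x$ on this interval. Hence $\|g\|_2^2\le4\|q\|_2^2$, and therefore $\int_E|q|^2\le 2\mathcal D_N(E)\|q\|_2^2$.

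This is essentially the paper's argument. It takes $g_0(t):=e^{\pi i(N-1)t}\cos(\pi(N-1)t)\1_{[-1/(2N),1/(2N)]}(t)$. By the same sign check it shows $\widehat g_0(n)\ge b:=\|g_0\|_2^2\ge 1/(2N)$ for $0\le n\le N-1$. It then writes $p=u*g_0$ with $\|u\|_2\le b^{-1}\|p\|_2$, and obtains
\[
\int_E|p|^2\le\frac{\mathcal D_N(E)}{Nb}\|p\|_2^2\le 2\mathcal D_N(E)\|p\|_2^2 .
\]

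A minor point: the modulation built into $g_0$ keeps the frequencies at the integers $0,\dots,N-1$. In your version, for even $N$ the shifted polynomial is anti-periodic. So $q=g*h$ and Parseval have to be read on $\R$, or over a coset of $\Z$ in frequency, rather than literally on $\T$. This works, but it needs a sentence more than ``only $|q|^2$ enters''.
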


\begin{proof}
The case $N=1$ is immediate. Assume $N\ge2$, and set
$h=N^{-1}$ and $q=N-1$.

Following the cosine-kernel argument of Donoho and Logan
\cite{DonohoLogan1992}, let
\[
g(t)=e^{\pi iqt}\cos(\pi qt)
\mathbf 1_{[-h/2,h/2]}(t),
\qquad b=\|g\|_2^2.
\]
A direct calculation gives
$
b\ge\frac h2$. 
Moreover, for \(0\le n\le q\),
\[
\widehat g(n)-b
=
2\int_{-h/2}^{h/2}
\cos(\pi qt)\sin(\pi nt)\sin(\pi(q-n)t)\,dt
\ge0.
\] 

For every $p\in\mathcal P_{N-1}$, where
$\mathcal P_{N-1}:=\operatorname{span}\{e^{2\pi inx}:0\le n<N\}$,
we may therefore write $p=u*g$, with
$\|u\|_2\le b^{-1}\|p\|_2$.
Here, $\|\cdot\|_2$ denotes the $L^2(\mathbb T)$ norm, with
$\mathbb T$ identified with $[0,1)$, and $u*g$ denotes convolution on
$\mathbb T$. 

By Cauchy--Schwarz and Fubini's theorem,
\[
\begin{aligned}
\int_E|p(x)|^2\,dx
&\le b\sup_{y\in\T}
|E\cap[y-h/2,y+h/2]|\,
\|u\|_2^2\\
&\le \frac{\mathcal D_N(E)}{Nb}\|p\|_2^2\\
&\le 2\mathcal D_N(E)\|p\|_2^2.
\end{aligned}
\]
Hence $\|B_{N,E}\|\le2\mathcal D_N(E)$.
Together with
$\|B_{N,E}\|\le\operatorname{tr}(B_{N,E})=N|E|$,
this proves the upper bound.

For the lower bound, testing with the normalized
Dirichlet kernel
\[
p_x(\xi)=N^{-1/2}
\sum_{n=0}^{N-1}e^{2\pi in(\xi-x)}
\]
and using
$|p_x(\xi)|^2\ge4N/\pi^2$ for
$|\xi-x|\le1/(2N)$ gives
$\|B_{N,E}\| \ge\frac4{\pi^2}\mathcal D_N(E)$.

Finally, taking $E=S\triangle S_0$ and using
$-B_{N,E}\preceq B_{N,S}-B_{N,S_0} \preceq B_{N,E}$
gives \eqref{eq:local-perturbation-norm}.
\end{proof}
\begin{remark}
The local estimate controls the operator norm,
not the trace norm. In particular, when $G\subset J$ and
$S=J\setminus G$,
$\|B_{N,J}-B_{N,S}\|_1=N|G|$.
Thus the local estimate cannot simply replace
the total-measure term in
\eqref{eq:set-perturbation-plunge-additive}.
\end{remark}

\begin{proof}[Proof of Theorem~\ref{thm:effective-boundary}]
  After discarding zero-length intervals in the representation of \(S_0\) and merging intervals whose closures meet, the set \(S_0\) changes only on a set of measure zero. Hence \(B_{N,S_0}\), \(|S\triangle S_0|\), and \(\omega_N(S\triangle S_0)\) are unchanged, while the boundary count can only decrease. Theorem~\ref{thm:first-main} therefore gives
$M_t(N,S_0)\le\#\partial_\T S_0\,\Phi_N(t)$ for $0<t<1/2$,
for any finite union of intervals $S_0$; when $|S_0|=0$ or $1$,
the left-hand side is zero.

Choose $S_0$ with $|S\triangle S_0|\le\rho$ and
$\#\partial_\T S_0=\nu_{\mathrm{eff}}(S;\rho)$.
Combining the preceding bound with
\eqref{eq:set-perturbation-plunge-additive} proves
\eqref{eq:effective-boundary-additive}; taking $\tau=\eps/2$ gives
\eqref{eq:effective-boundary-half}.
If $N\rho<\eps$, \eqref{eq:set-perturbation-plunge-shifted} and
monotonicity in the threshold give
\[
M_\eps(N,S)
\le M_{\eps-N|S\triangle S_0|}(N,S_0)
\le M_{\eps-N\rho}(N,S_0)
\le\nu_{\mathrm{eff}}(S;\rho)\Phi_N(\eps-N\rho),
\]
which is \eqref{eq:effective-boundary-shifted}.

For \eqref{eq:effective-boundary-local}, choose $S_0$ attaining the
minimum in \eqref{eq:local-effective-boundary}.
Lemma~\ref{lem:local-concentration} and Weyl's inequality yield
$|\lambda_k(N,S)-\lambda_k(N,S_0)|\le\eta$ for every $k$.
Since $\eta<\eps$, it follows that
\[
M_\eps(N,S)
\le M_{\eps-\eta}(N,S_0)
\le\nu_{\mathrm{loc}}(N,S;\eta)\Phi_N(\eps-\eta).
\]
Finally, $\omega_N(E)\le N|E|$ makes every comparison set admissible
for $\nu_{\mathrm{eff}}(S;\rho)$ also admissible for
$\nu_{\mathrm{loc}}(N,S;N\rho)$, proving
\eqref{eq:local-global-boundary-comparison}.
\end{proof}

\begin{proof}[Proof of Corollary~\ref{cor:small-gaps}]
Taking $S_0=J$ gives $|S\triangle J|=g$ and
$\#\partial_\T J=2$, so
$\nu_{\mathrm{eff}}(S;g)\le2$ and
$\nu_{\mathrm{loc}}(N,S;\omega_N(G))\le2$.
The estimates \eqref{eq:small-gap-additive},
\eqref{eq:small-gap-shifted}, and \eqref{eq:small-gap-local}
follow from \eqref{eq:effective-boundary-half},
\eqref{eq:effective-boundary-shifted}, and
\eqref{eq:effective-boundary-local}, respectively.
Since $\Phi_N$ is nonincreasing on $(0,1/2)$,
$Ng\le\eps/2$ gives \eqref{eq:small-gap-half}.
The same conclusion follows when $\mathcal D_N(G)\le\eps/4$,
because then $\omega_N(G)\le2\mathcal D_N(G)\le\eps/2$.
\end{proof}

\begin{remark}[Exact and effective boundary counts]
\label{rem:exact-effective-boundary}
The displacement rank counts every boundary point of $S$. In contrast,
Theorem~\ref{thm:effective-boundary} allows us to compare $S$ with a
frequency set having fewer boundary points. The resulting bounds depend
either on the total measure of the change or on its concentration within
intervals of length $N^{-1}$.
\end{remark}

\begin{remark}
If the approximating set $S_0$ satisfies $0<|S_0|\leq 1/2$, the same argument may
be combined with Theorem~\ref{thm:second-main} instead of
Theorem~\ref{thm:first-main}.  This gives a corresponding version in which the
main logarithmic term depends on $N|S_0|$.
\end{remark}

\subsection{An example with many small gaps} 
The next result shows that the local estimate
can be substantially better than the total-measure
estimate, even after optimizing the choice of the
simpler frequency set.

\begin{proposition}
[Many small gaps]
\label{prop:local-global-separation}
Fix $0<\eps<1/2$ and let $N\ge2$ be even.
Put $a=\eps/8$ and $J=[0,1/2]$.
For each $0\le j<N/2$, choose an interval
$G_j\subset(j/N,(j+1)/N)$ with $|G_j|=a/N$ whose closure is contained in that cell.
Set $G=\bigcup_{j=0}^{N/2-1}G_j$ and $S_N=J\setminus G$.
Then $|G|=\eps/16$, $\#\partial_{\T}S_N=N+2$, and
\begin{equation}
M_\eps(N,S_N)
\le
2\Phi_N(\eps/2).
\label{eq:separated-family-local}
\end{equation}

On the other hand, for every $\rho\ge0$
and $0<\tau<\eps$,
\begin{equation}
\nu_{\mathrm{eff}}(S_N;\rho)\Phi_N(\tau)
+\frac{N\rho}{\eps-\tau}
\ge
\frac N{16}.
\label{eq:separated-family-old-additive}
\end{equation}
If $N\rho<\eps$, then also
\begin{equation}
\nu_{\mathrm{eff}}(S_N;\rho)
\Phi_N(\eps-N\rho)
>
\frac N2-8.
\label{eq:separated-family-old-shifted}
\end{equation}
Thus the local estimate gives an
$O_\eps(\log N)$ bound, whereas the total-measure
bounds, even after optimization, have right-hand
sides bounded below by quantities of order $N$.
\end{proposition}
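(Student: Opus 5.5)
The plan is to verify the three elementary facts about $S_N$ first, then read \eqref{eq:separated-family-local} off Corollary~\ref{cor:small-gaps}, and finally prove the two lower bounds by a cell-counting argument. Write $C_j:=(j/N,(j+1)/N)$ for $0\le j<N/2$.

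\textbf{Elementary facts and the local bound.} Since there are $N/2$ gaps of length $a/N$, we get $|G|=a/2=\eps/16$. Each closure $\overline{G_j}$ lies in the open cell $C_j\subset(0,1/2)$, so the gaps are pairwise separated and avoid the endpoints $0$ and $1/2$ of $J$. Hence $\partial_{\T}S_N$ consists of $0$, $1/2$ and the two endpoints of each $G_j$, giving $\#\partial_{\T}S_N=N+2$.

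For \eqref{eq:separated-family-local}, I would bound $\mathcal D_N(G)$. An interval $[x,x+N^{-1}]$ meets at most two of the cells $\overline{C_j}$, hence at most two gaps, so $|G\cap[x,x+N^{-1}]|\le 2a/N$. This gives $\mathcal D_N(G)\le 2a=\eps/4$. The final clause of Corollary~\ref{cor:small-gaps}, applied with $J=[0,1/2]$, then gives $M_\eps(N,S_N)\le 2\Phi_N(\eps/2)$.

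\textbf{Lower bounds.} Let $S_0$ be any finite union of intervals with $|S_N\triangle S_0|\le\rho$, and set $b:=\#\partial_{\T}S_0$. The key claim is
\[
N\rho\ \ge\ \Bigl(\frac N2-b\Bigr)\frac{\eps}{8}.
\]
To prove it, note that at most $b$ of the disjoint open cells $C_j$ contain a boundary point of $S_0$. If $C_j$ contains none, then $C_j$ is connected and misses $\partial_{\T}S_0$, so $C_j$ lies entirely in the interior of $S_0$ or entirely in the interior of its complement. Up to null sets, $S_0\cap C_j$ is therefore $C_j$ or $\varnothing$. In either case,
\[
|(S_N\triangle S_0)\cap C_j|\ \ge\ \min\bigl(|G_j|,\,|C_j\setminus G_j|\bigr)=\frac aN,
\]
using $a<1/2$. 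Summing over the at least $N/2-b$ such cells gives $\rho\ge (N/2-b)a/N$, which is the claim.

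Next I use $\Phi_N(t)\ge1$, which holds because the quantity inside the ceiling in \eqref{eq:PhiN} is positive. Take $S_0$ to be a minimizer for $\nu_{\mathrm{eff}}(S_N;\rho)$, so that $b=\nu_{\mathrm{eff}}(S_N;\rho)$.
\begin{itemize}[leftmargin=*]
\item For \eqref{eq:separated-family-old-additive}: if $b\ge N/2$ the bound is trivial. Otherwise, using $\eps-\tau<\eps$,
\[
b\,\Phi_N(\tau)+\frac{N\rho}{\eps-\tau}\ \ge\ b+\frac{N\rho}{\eps}\ \ge\ b+\frac18\Bigl(\frac N2-b\Bigr)\ \ge\ \frac N{16}.
\]
\item For \eqref{eq:separated-family-old-shifted}: the hypothesis $N\rho<\eps$ together with the claim forces $(N/2-b)\eps/8<\eps$, that is, $b>N/2-8$. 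Then $b\,\Phi_N(\eps-N\rho)\ge b>N/2-8$.
\end{itemize}

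\textbf{Main obstacle.} The only delicate step is the topological claim that a cell free of boundary points of $S_0$ is, up to measure zero, either contained in $S_0$ or disjoint from it. This rests on $C_j$ being connected in $\T$ and on $S_0$ being a finite union of intervals, so that $\T\setminus\partial_{\T}S_0$ splits into open arcs lying alternately inside and outside $S_0$. Everything else in the argument is counting.
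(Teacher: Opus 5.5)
Your proposal is correct and follows the paper's proof essentially step for step. It uses the at-most-two-cells bound $\mathcal D_N(G)\le 2a$ fed into Corollary~\ref{cor:small-gaps}, the cell-counting obstruction $N\rho\ge a(N/2-r)$ on cells free of boundary points of $S_0$, and the same use of $\Phi_N\ge1$ for both lower bounds. One small wording fix: when $x=j/N$, the interval $[x,x+N^{-1}]$ touches three of the closed cells $\overline{C_j}$, so say instead that it meets at most two of the open cells $C_j$ (the paper says at most two cells in sets of positive measure). This still gives at most two gaps, since each $\overline{G_j}\subset C_j$.
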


\begin{proof}
There are $N/2$ gaps, each of length
$a/N$. Thus $|G|=a/2=\eps/16$,
and $S_N$ has $N/2+1$ components.

Every interval of length $N^{-1}$
intersects at most two of the grid cells
in sets of positive measure. Hence
$\mathcal D_N(G)\le2a$ and $\omega_N(G)\le4a=\eps/2$.
Corollary~\ref{cor:small-gaps}
therefore gives
\eqref{eq:separated-family-local}.

Now let $S_0$ be any finite union of
intervals satisfying
$|S_N\triangle S_0|\le\rho$ and $r:=\#\partial_{\T}S_0$.
At most $r$ of the $N/2$ cell interiors
contain a boundary point of $S_0$.
On every remaining cell,
$\mathbf 1_{S_0}$ is constant almost
everywhere.

Since $S_N$ occupies a proportion
$1-a$ of each cell, and $a<1/2$,
the error on every such cell is at
least $a/N$. Therefore
\begin{equation}
N\rho\ge a(N/2-r)_+
\ge a(N/2-r).
\label{eq:cell-counting-obstruction}
\end{equation}

Choose $S_0$ attaining
$r=\nu_{\mathrm{eff}}(S_N;\rho)$.
Since $\Phi_N(\tau)\ge1$, we obtain
\begin{align*}
r\Phi_N(\tau)
+\frac{N\rho}{\eps-\tau}
\ge
r+\frac{N\rho}{\eps}
 \ge
\frac{aN}{2\eps}
+\left(1-\frac a\eps\right)r
 \ge\frac N{16},
\end{align*}
which proves
\eqref{eq:separated-family-old-additive}.

If $N\rho<\eps$, then
\eqref{eq:cell-counting-obstruction}
gives
$r>\frac N2-\frac{\eps}{a} =\frac N2-8$.
Since $\Phi_N(\eps-N\rho)\ge1$,
this proves
\eqref{eq:separated-family-old-shifted}.
\end{proof}

The lower bounds in \eqref{eq:separated-family-old-additive} and
\eqref{eq:separated-family-old-shifted} concern the right-hand sides
of \eqref{eq:effective-boundary-additive} and
\eqref{eq:effective-boundary-shifted}, respectively. They are not
lower bounds for $M_\eps(N,S_N)$.

\section{Location of the plunge region and individual eigenvalues}
\label{sec:eigenvalue-decay}

The $1/2$-crossing estimate of
Zhu and Wakin~\cite[Theorem~3.3]{wakin2017approximating} supplies the
indices locating the \(1/2\) crossing for the individual eigenvalue bounds. Set
\begin{equation}
q_-:=\floor{N|S|}-\nu+1,
\qquad
q_+:=\ceil{N|S|}+\nu-2.
\label{eq:qpm}
\end{equation}
In our notation, their theorem gives integers $\iota_-$ and
$\iota_+$ satisfying
\begin{equation}
\floor{N|S|}-2M+2
\le \iota_-\le\iota_+
\le \ceil{N|S|}+2M-2,
\label{eq:ZW-iota}
\end{equation}
and
\begin{equation}
\lambda_{\iota_--1}\ge\frac12,
\qquad
\lambda_{\iota_+}\le\frac12,
\label{eq:ZW-crossing}
\end{equation}
whenever the corresponding indices belong to $\{0,\ldots,N-1\}$.

Monotonicity of the eigenvalues gives the following consequence.

\begin{proposition}[$1/2$ crossing near $N|S|$]
\label{prop:crossing}
Let $S$ be a union of $M$  frequency intervals and let $\nu=2M$.
Define $q_-$ and $q_+$ by \eqref{eq:qpm}. If
$q_-\in\{0,\ldots,N-1\}$, then $\lambda_{q_-}\ge\frac12$.
If $q_+\in\{0,\ldots,N-1\}$, then $\lambda_{q_+}\le\frac12$.
\end{proposition}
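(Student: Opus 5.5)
The plan is to deduce the statement directly from the Zhu--Wakin integers $\iota_\pm$ in \eqref{eq:ZW-iota}--\eqref{eq:ZW-crossing}, using only that the eigenvalues are listed in nonincreasing order. With $\nu=2M$, the lower bound in \eqref{eq:ZW-iota} reads $\iota_-\ge\floor{N|S|}-\nu+2=q_-+1$, that is, $q_-\le\iota_--1$. Likewise the upper bound reads $\iota_+\le\ceil{N|S|}+\nu-2=q_+$.

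For the first assertion, suppose $q_-\in\{0,\dots,N-1\}$. Then $\iota_--1\ge q_-\ge0$. If also $\iota_--1\le N-1$, then \eqref{eq:ZW-crossing} gives $\lambda_{\iota_--1}\ge\frac12$. Monotonicity then yields $\lambda_{q_-}\ge\lambda_{\iota_--1}\ge\frac12$.

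The case $\iota_--1>N-1$ needs separate treatment, because there \eqref{eq:ZW-crossing} is not available as stated. I would handle it by appealing to the form of the Zhu--Wakin theorem: the index $\iota_-$ can be taken as the smallest index with $\lambda_{\iota_-}<\frac12$, or $N$ if there is none. With that choice, $\iota_-\ge q_-+1$ means every $\lambda_k$ with $k\le q_-$ is at least $\frac12$, whether or not $\iota_--1$ lies in range.

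The second assertion is symmetric. Suppose $q_+\in\{0,\dots,N-1\}$. If $\iota_+\in\{0,\dots,N-1\}$, then $\lambda_{q_+}\le\lambda_{\iota_+}\le\frac12$, since $q_+\ge\iota_+$. If instead $\iota_+<0$, I would use the analogous characterization of $\iota_+$ as the largest index with $\lambda\ge\frac12$ plus one, or $0$ if there is none. That characterization gives $\lambda_k\le\frac12$ for all $k\ge\iota_+$, in particular for $k=q_+$.

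Alternatively, the second assertion follows from the first applied to $S^c$ via $\lambda_k(N,S^c)=1-\lambda_{N-1-k}(N,S)$, as in Remark~\ref{rem:complement}. This works because $\#\partial_\T S^c=\nu$ and $N|S^c|=N-N|S|$. Under this map $N-1-q_+$ becomes exactly $q_-$ for $S^c$, since $\floor{N-N|S|}=N-\ceil{N|S|}$.

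The main obstacle is the boundary case in which $\iota_\pm$ falls outside $\{0,\dots,N-1\}$. Its resolution depends on the exact form of \cite[Theorem~3.3]{wakin2017approximating} as translated into \eqref{eq:ZW-crossing}, and I would verify there that $\iota_\pm$ are genuine crossing indices in the sense above.
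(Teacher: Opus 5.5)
Your argument is the paper's argument. You read off $q_-\le\iota_--1$ and $\iota_+\le q_+$ from \eqref{eq:ZW-iota} with $\nu=2M$, then apply \eqref{eq:ZW-crossing} and monotonicity of the eigenvalues; your separate treatment of the cases $\iota_--1\ge N$ and $\iota_+<0$, where \eqref{eq:ZW-crossing} says nothing as stated, covers a point the paper's one-line proof passes over, and it closes as you suggest once $\iota_\pm$ are taken to be the actual crossing indices from the Zhu--Wakin theorem (for $\iota_+$, most naturally the first index with $\lambda_k\le\frac12$, rather than the first with $\lambda_k<\frac12$).
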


\begin{proof}
Since $\nu=2M$, \eqref{eq:ZW-iota} gives
\[
\iota_--1\ge \floor{N|S|}-\nu+1=q_-,
\qquad
\iota_+\le \ceil{N|S|}+\nu-2=q_+.
\]
By the monotonicity of the eigenvalue sequence and
\eqref{eq:ZW-crossing}, the result is immediate.
\end{proof}

 For a single interval,  Karnik, Romberg, and Davenport
\cite[Corollary~1]{Karnik21} used the $1/2$ crossing together with their
transition bound to obtain explicit estimates for the individual
eigenvalues. Applying the same argument to Theorem~\ref{thm:first-main}
and Proposition~\ref{prop:crossing} gives the following multiband version.

\begin{corollary}[Exponential eigenvalue decay]
\label{cor:eigenvalue-decay}
Assume the hypotheses of Theorem~\ref{thm:first-main}  and let $q_-,q_+$ be as defined in \eqref{eq:qpm}. Put
\begin{equation}
a_N:=\frac{\pi^2}{\nu\log(4N)}.
\label{eq:aN}
\end{equation}
If $q_-\ge0$, then for every $0\le k\le q_-$,
\begin{equation}
\lambda_k
\ge
1-
8\exp\!\left[-a_N\bigl(q_--k+1-\nu\bigr)\right].
\label{eq:eig-lower}
\end{equation}
If $q_+\le N-1$, then for every $q_+\le k\le N-1$,
\begin{equation}
\lambda_k
\le
8\exp\!\left[-a_N\bigl(k-q_++1-\nu\bigr)\right].
\label{eq:eig-upper}
\end{equation}
\end{corollary}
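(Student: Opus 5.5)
The plan follows the argument of \cite[Corollary~1]{Karnik21}: combine the transition bound of Theorem~\ref{thm:first-main}, inverted as a statement about $\eps$, with the $1/2$-crossing of Proposition~\ref{prop:crossing}. We treat the lower bound \eqref{eq:eig-lower}; the upper bound \eqref{eq:eig-upper} is symmetric. It can also be obtained by applying the same counting to indices $k\ge q_+$, using $\lambda_{q_+}\le 1/2$.

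\emph{Step 1 (counting).} Fix $0\le k\le q_-$ and suppose $\lambda_k<1-\eps$ for some $\eps\in(0,1/2)$. By monotonicity and Proposition~\ref{prop:crossing}, every index $j$ with $k\le j\le q_-$ satisfies $\lambda_j\le\lambda_k<1-\eps$ and $\lambda_j\ge\lambda_{q_-}\ge 1/2>\eps$. Hence
\[
M_\eps(N,S)\ge q_--k+1.
\]
Theorem~\ref{thm:first-main}, together with $\ceil{x}<x+1$, then gives
\[
q_--k+1<\frac{\nu\log(4N)}{\pi^2}\log\!\left(\frac{4}{\eps(1-\eps)}\right)+\nu,
\qquad\text{that is,}\qquad
\eps(1-\eps)<4\exp\!\bigl[-a_N(q_--k+1-\nu)\bigr].
\]

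\emph{Step 2 (choice of $\eps$).} Set $\eps:=1-\lambda_k$.
\begin{itemize}
\item If $\eps\ge 1/2$, the target \eqref{eq:eig-lower} is trivial whenever its right-hand side is $\le 1/2$. If $\lambda_k<1/2$, then $k>q_-$ would be forced, contradicting $\lambda_{q_-}\ge 1/2$. The only remaining case is $\lambda_k=1/2$ exactly, i.e.\ $\eps=1/2$.
\item If $\eps<1/2$, apply Step 1 with any $\eps'\in(\eps,1/2)$ and let $\eps'\downarrow\eps$. This gives $\eps(1-\eps)\le4\exp[-a_N(q_--k+1-\nu)]$.
\end{itemize}
Since $1-\eps=\lambda_k\ge1/2$, we obtain $1-\lambda_k=\eps\le 8\exp[-a_N(q_--k+1-\nu)]$, which is \eqref{eq:eig-lower}.

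\emph{Step 3 (boundary cases).} When the exponent $q_--k+1-\nu$ is nonpositive, the right-hand side of \eqref{eq:eig-lower} is $\le 1-8<0$, so the bound holds trivially. Likewise, when $\lambda_k\ge1/2$ and the bound $8e^{-\cdots}\ge 1/2$, the statement is automatic. Thus only the regime $\eps<1/2$ needs the argument above.

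For \eqref{eq:eig-upper}, take $k\ge q_+$ and set $\eps:=\lambda_k\le 1/2$. The indices $q_+\le j\le k$ satisfy $\eps\le\lambda_j\le 1/2$. Running the same count with thresholds slightly below $\eps$ gives $\lambda_k(1-\lambda_k)\le4\exp[-a_N(k-q_++1-\nu)]$. Using $1-\lambda_k\ge1/2$ then yields \eqref{eq:eig-upper}.

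The main obstacle is the strict-versus-nonstrict inequality at the threshold. $M_\eps$ counts only $\eps<\lambda_j<1-\eps$, while $\lambda_k=1-\eps$ sits on the boundary; the limiting argument in $\eps'$ resolves this. The constant $8=2\cdot4$ comes from $\ceil{\cdot}$ adding at most $\nu$ indices (absorbed by the $-\nu$ in the exponent) and the factor $1/(1-\eps)\le2$.
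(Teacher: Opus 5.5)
Your strategy is the paper's: use $\lambda_{q_-}\ge 1/2$ to place the $d:=q_--k+1$ indices $k,\dots,q_-$ in the plunge region, then invert Theorem~\ref{thm:first-main} using $\ceil{x}<x+1$ and $1-\eps\ge1/2$.

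\textbf{The gap.} The limiting step in your Step~2 runs in the wrong direction. With $\eps=1-\lambda_k$ and $\eps'\in(\eps,1/2)$, you have $1-\eps'<1-\eps=\lambda_k$. So the hypothesis $\lambda_k<1-\eps'$ of Step~1 is false. Index $k$, and possibly others, is not counted in $M_{\eps'}(N,S)$, so $d\le M_{\eps'}(N,S)$ is unavailable. Raising the threshold shrinks $(\eps',1-\eps')$ and pushes $\lambda_k$ further outside it.

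\textbf{The fix.} Take $\eps'\in(0,\eps)$ and let $\eps'\uparrow\eps$. For $k\le j\le q_-$ you then have $\lambda_j\le\lambda_k=1-\eps<1-\eps'$ and $\lambda_j\ge1/2>\eps'$. Step~1 gives $\eps'(1-\eps')<4\exp[-a_N(d-\nu)]$, and continuity yields $\eps(1-\eps)\le4\exp[-a_N(d-\nu)]$. For \eqref{eq:eig-upper} you did take thresholds slightly below $\lambda_k$, which is the correct direction.

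\textbf{The case $\lambda_k=1/2$.} The corrected limit also covers $\eps=1/2$, which your case analysis otherwise leaves open. Step~3 disposes of $\lambda_k=1/2$ only when $8\exp[-a_N(d-\nu)]\ge1/2$. Nothing in your argument excludes $\lambda_k=1/2$ together with $8\exp[-a_N(d-\nu)]<1/2$.

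\textbf{The paper's route.} The paper avoids the limit by arguing by contradiction at the target threshold $\eta:=8\exp[-a_N(d-\nu)]$. If $\eta\ge1/2$, the bound is immediate from $\lambda_k\ge\lambda_{q_-}\ge1/2$. Otherwise, $\lambda_k<1-\eta$ would put all of $k,\dots,q_-$ strictly inside $(\eta,1-\eta)$. This gives $d\le M_\eta(N,S)<\nu\frac{\log(4N)}{\pi^2}\log(8/\eta)+\nu=d$, a contradiction. The strict-versus-nonstrict issue never arises.
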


\begin{proof}
We prove \eqref{eq:eig-lower}; the proof of \eqref{eq:eig-upper} uses the same argument.  Fix $k\le q_-$ and set
\[
d:=q_--k+1,
\qquad
\eta:=8\exp\!\left[-a_N(d-\nu)\right].
\]
If $\eta\ge1/2$, then \eqref{eq:eig-lower} follows from Proposition~\ref{prop:crossing}.  Suppose $\eta<1/2$ and, for contradiction, that $\lambda_k<1-\eta$.  Since $\lambda_{q_-}\ge1/2>\eta$, all eigenvalues with indices $k,k+1,\dots,q_-$ lie in $(\eta,1-\eta)$.  Hence
$d\le M_\eta(N,S)$.
On the other hand, by Theorem~\ref{thm:first-main} and $1-\eta>1/2$, we have
\begin{align*}
M_\eta(N,S)
 \le
\nu\ceil{\frac{\log(4N)}{\pi^2}
\log\!\left(\frac{4}{\eta(1-\eta)}\right)} <
\nu\frac{\log(4N)}{\pi^2}\log\!\left(\frac{8}{\eta}\right)+\nu
=d,
\end{align*}
where the last identity follows from the definition of $\eta$.  This is a contradiction, so \eqref{eq:eig-lower} follows.
\end{proof}

The corresponding estimates for a single interval appear in
\cite[Corollary~2]{Karnik21}. Summing the multiband bounds in
Corollary~\ref{cor:eigenvalue-decay} gives the following result.

\begin{corollary}[Partial sums]
\label{cor:partial-sums}
Under the hypotheses of Corollary~\ref{cor:eigenvalue-decay}, if $1\le K\le q_-+1$, then
\begin{equation}
\sum_{k=0}^{K-1}(1-\lambda_k)
\le
\frac{8\exp\!\left[-a_N(q_--K+2-\nu)\right]}{1-e^{-a_N}}.
\label{eq:leading-sum}
\end{equation}
If $q_+\le K\le N-1$, then
\begin{equation}
\sum_{k=K}^{N-1}\lambda_k
\le
\frac{8\exp\!\left[-a_N(K-q_++1-\nu)\right]}{1-e^{-a_N}}.
\label{eq:trailing-sum}
\end{equation}
\end{corollary}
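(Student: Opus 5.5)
The plan is to sum the pointwise bounds of Corollary~\ref{cor:eigenvalue-decay} term by term and compare with a geometric series. First take \eqref{eq:leading-sum}. Assume $1\le K\le q_-+1$, so $q_-\ge0$ and every index $0\le k\le K-1$ satisfies $k\le q_-$. Then \eqref{eq:eig-lower} applies to each term and gives
\[
1-\lambda_k\le 8\exp\!\left[-a_N\bigl(q_--k+1-\nu\bigr)\right],
\qquad 0\le k\le K-1 .
\]
Reindex by $j:=K-1-k$, which runs over $0,\dots,K-1$. The exponent becomes $-a_N(q_--K+2-\nu)-a_Nj$, and therefore
\[
\sum_{k=0}^{K-1}(1-\lambda_k)
\le 8\exp\!\left[-a_N(q_--K+2-\nu)\right]\sum_{j=0}^{K-1}e^{-a_Nj}.
\]
Since $a_N>0$, the finite geometric sum is at most $1/(1-e^{-a_N})$, which is exactly \eqref{eq:leading-sum}.

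For \eqref{eq:trailing-sum} the argument is the same. Assume $q_+\le K\le N-1$, so $q_+\le N-1$ and \eqref{eq:eig-upper} holds for every $k\in\{K,\dots,N-1\}$. Set $j:=k-K\ge0$. Each term is then bounded by $8\exp[-a_N(K-q_++1-\nu)]e^{-a_Nj}$. Summing over $j\ge0$ and extending the finite sum to the infinite geometric series gives the claimed bound.

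There is no real obstacle here. The only points to check are that the index ranges in the hypotheses make the corresponding half of Corollary~\ref{cor:eigenvalue-decay} valid for every summand, and that the offset after reindexing is exactly $q_--K+2-\nu$ in the first case and $K-q_++1-\nu$ in the second. The bounds in Corollary~\ref{cor:eigenvalue-decay} may exceed $1$ when the exponent is negative. This does not matter, because each term is used only as an upper bound and the summation step remains valid.
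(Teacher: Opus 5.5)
Your proposal is correct and follows the paper's own argument, which simply sums the pointwise bounds \eqref{eq:eig-lower} and \eqref{eq:eig-upper} of Corollary~\ref{cor:eigenvalue-decay} as geometric series. Your index checks are right. The condition $1\le K\le q_-+1$ forces $q_-\ge0$ and $k\le q_-$ for every summand, and $q_+\le K\le N-1$ makes \eqref{eq:eig-upper} available for every summand. The reindexing produces exactly the offsets $q_--K+2-\nu$ and $K-q_++1-\nu$.
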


The proof follows from \eqref{eq:eig-lower} and \eqref{eq:eig-upper} by summing the corresponding geometric series.

\section{Effective dimension of sampled multiband signals}
\label{sec:signal-approximation}
We recall the standard singular-value characterization of optimal
finite-dimensional approximation;  see
\cite[Proposition~1]{zhu2023time}.
We then apply the eigenvalue estimates above to sampled multiband signals. 
Let $\mathcal H_S:=\{x\in\ell^2(\Z):B_Sx=x\}$ be the space of
$S$-bandlimited sequences, and define the restriction operator
\[
R_{N,S}:\mathcal H_S\longrightarrow\C^N,
\qquad
R_{N,S}x=(x[0],\ldots,x[N-1]).
\]
As before, we identify $\C^N$ with the sequences supported on $I_N$.
For a subspace $V\subset\C^N$, let $P_V$ denote the orthogonal projection
onto $V$.

Let $u_0,\ldots,u_{N-1}$ be corresponding orthonormal eigenvectors
of $B_{N,S}$. For $0\le K<N$, set
$V_K:=\operatorname{span}\{u_0,\ldots,u_{K-1}\}$, with $V_0:=\{0\}$.
Then
\begin{equation}\label{eq:optimal-signal-error}
\sup_{\substack{x\in\mathcal H_S\\ \|x\|_{\ell^2}\le1}}
\|R_{N,S}x-P_{V_K}R_{N,S}x\|_2
=
\sqrt{\lambda_K}, \end{equation}
and
 \begin{equation}\label{eq:optimal-signal-error-min}
\inf_{\substack{V\subset\C^N\\ \dim V=K}}
\sup_{\substack{x\in\mathcal H_S\\ \|x\|_{\ell^2}\le1}}
\|R_{N,S}x-P_VR_{N,S}x\|_2
=
\sqrt{\lambda_K}.
 \end{equation}
For \(y\in\mathbb C^N\), viewed as a sequence supported on \(I_N\), we have
$R_{N,S}^*y=B_Sy$. 
Hence \(R_{N,S}R_{N,S}^*=B_{N,S}\). It follows that the singular values of \(R_{N,S}\) are
\(\sqrt{\lambda_0},\ldots,\sqrt{\lambda_{N-1}}\), and the leading
left singular vectors span \(V_K\).

For $0<\delta<1$, define the approximation dimension
\begin{equation}
d_\delta(N,S)
:=
\min\left\{
K:
\begin{array}{l}
\text{there exists }V\subset\C^N,\ \dim V=K,\text{ such that}\\[1mm]
\displaystyle
\sup_{\substack{x\in\mathcal H_S\\\|x\|_{\ell^2}\le1}}
\|R_{N,S}x-P_VR_{N,S}x\|_2\le\delta
\end{array}
\right\}.
\label{eq:signal-effective-dimension}
\end{equation} Thus $d_\delta(N,S)$ is the smallest dimension giving uniform error
at most $\delta$ on the unit ball of $\mathcal H_S$. Clearly \(d_\delta(N,S)\le N\), since \(V=\mathbb C^N\) gives zero error.   
By \eqref{eq:optimal-signal-error}--\eqref{eq:optimal-signal-error-min},
\begin{equation}\label{eq:dimension-eigenvalue-count}
d_\delta(N,S)
=
\#\{k:\lambda_k(N,S)>\delta^2\}.
\end{equation}

Combining \eqref{eq:dimension-eigenvalue-count} with
Proposition~\ref{prop:crossing} and Theorem~\ref{thm:first-main}
gives the following bound for the approximation dimension.

\begin{theorem}[Non-asymptotic bound for the approximation dimension]
\label{thm:signal-effective-dimension}
Let $S\subset\T$ be a finite union of intervals with $0<|S|<1$, and let
$\nu=\#\partial_{\T}S$. Let $0<\delta<1/\sqrt{2}$. Then
\begin{equation}
\max\left\{0,\floor{N|S|}-\nu+2\right\}
\le
d_\delta(N,S)
\le
\min\left\{
N,\,
\ceil{N|S|}+\nu-2
+\nu\Phi_N(\delta^2)
\right\}.
\label{eq:signal-dimension-bound}
\end{equation}
\end{theorem}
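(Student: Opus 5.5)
The plan is to work entirely through the identity \eqref{eq:dimension-eigenvalue-count}, $d_\delta(N,S)=\#\{k:\lambda_k>\delta^2\}$. This turns both inequalities in \eqref{eq:signal-dimension-bound} into statements about where the decreasing sequence $\lambda_k$ crosses the level $\delta^2$. First I would use Lemma~\ref{lem:frequency-translation} to put $S$ in the form \eqref{eq:general-components} with $\nu=2M$. The eigenvalues do not change under this, so Proposition~\ref{prop:crossing} and Theorem~\ref{thm:first-main} apply with this $\nu$. The trivial parts, $d_\delta\ge0$ and $d_\delta\le N$, are immediate, so only the two nontrivial bounds remain.

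\emph{Lower bound.} Since $\delta<1/\sqrt2$, we have $\delta^2<1/2$. If $q_-=\floor{N|S|}-\nu+1\ge0$, then $q_-\le N-1$ because $|S|<1$, so $q_-$ is a valid index. Proposition~\ref{prop:crossing} then gives $\lambda_{q_-}\ge1/2>\delta^2$. By monotonicity, $\lambda_0,\dots,\lambda_{q_-}$ all exceed $\delta^2$, so $d_\delta\ge q_-+1=\floor{N|S|}-\nu+2$. If instead $q_-<0$, the lower bound $\max\{0,\cdot\}=0$ holds trivially.

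\emph{Upper bound.} Let $K=d_\delta$, so that $\lambda_{K-1}>\delta^2$ and $\lambda_K\le\delta^2$. If $q_+=\ceil{N|S|}+\nu-2\ge N-1$, then $\ceil{N|S|}+\nu-2+\nu\Phi_N(\delta^2)\ge N$ and the minimum equals $N$, which is trivially an upper bound. So assume $q_+\le N-1$. Proposition~\ref{prop:crossing} gives $\lambda_{q_+}\le1/2$. If $K\le q_+$ we are done. Otherwise every index $k$ with $q_+<k\le K-1$ satisfies $\delta^2<\lambda_k\le\lambda_{q_+}\le1/2<1-\delta^2$. These $K-1-q_+$ indices therefore lie in the plunge region, giving $K-1-q_+\le M_{\delta^2}(N,S)$.

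This is where I expect the only real friction: the count misses the index $q_+$ itself, because $\lambda_{q_+}$ may equal $1/2$ or lie in $(\delta^2,1/2]$. I would include $k=q_+$ when $\lambda_{q_+}>\delta^2$; in that case $\lambda_{q_+}\in(\delta^2,1-\delta^2)$, so it is also counted by $M_{\delta^2}$. This yields $K-q_+\le M_{\delta^2}(N,S)$. Theorem~\ref{thm:first-main} with $\eps=\delta^2\in(0,1/2)$ then gives $M_{\delta^2}\le\nu\Phi_N(\delta^2)$, hence $K\le q_++\nu\Phi_N(\delta^2)=\ceil{N|S|}+\nu-2+\nu\Phi_N(\delta^2)$. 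If $\lambda_{q_+}\le\delta^2$, then already $K\le q_+$ and the bound is even stronger. Combining with $K\le N$ gives the stated minimum.
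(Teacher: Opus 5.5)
Your proposal is correct and follows essentially the same route as the paper. The lower bound comes from $\lambda_{q_-}\ge 1/2>\delta^2$ via Proposition~\ref{prop:crossing}. The upper bound $d_\delta(N,S)\le q_+ + M_{\delta^2}(N,S)$ holds because every counted index $k\ge q_+$ has $\lambda_k\in(\delta^2,1/2]\subset(\delta^2,1-\delta^2)$, and Theorem~\ref{thm:first-main} with $\eps=\delta^2$ then finishes it; your handling of the index $q_+$ itself and of the case $q_+\ge N-1$ matches the paper's argument with slightly more explicit bookkeeping.
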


\begin{proof}
Let $q_-,q_+$ be as in \eqref{eq:qpm}.
If $q_-\ge0$, Proposition~\ref{prop:crossing} gives
$\lambda_{q_-}\ge1/2>\delta^2$, so
$d_\delta(N,S)\ge q_-+1$ by \eqref{eq:dimension-eigenvalue-count}.
If $q_-<0$, the lower bound is trivial.

For the upper bound, suppose $q_+\le N-1$.
Every counted index $k\ge q_+$ satisfies
$\delta^2<\lambda_k\le1/2<1-\delta^2$, and hence is counted by
$M_{\delta^2}(N,S)$. Therefore Theorem~\ref{thm:first-main} gives
\[
d_\delta(N,S)
\le q_+ + M_{\delta^2}(N,S)
\le \ceil{N|S|}+\nu-2+\nu\Phi_N(\delta^2).
\]
Together with $d_\delta(N,S)\le N$, this proves the upper bound;
if $q_+>N-1$, the latter inequality suffices.
\end{proof}

Equation~\eqref{eq:optimal-signal-error} and
Corollary~\ref{cor:eigenvalue-decay} also give an explicit error bound
for the leading-eigenvector approximation.

\begin{corollary}[Exponential approximation error]
\label{cor:signal-exponential}
Assume the hypotheses of Corollary~\ref{cor:eigenvalue-decay}, and
suppose $q_+\le K\le N-1$. Then the space
$V_K=\operatorname{span}\{u_0,\ldots,u_{K-1}\}$ satisfies
\begin{equation}
\sup_{\substack{x\in\mathcal H_S\\\|x\|_{\ell^2}\le1}}
\|R_{N,S}x-P_{V_K}R_{N,S}x\|_2
\le
2\sqrt2\,
\exp\left[
-\frac{\pi^2}{2\nu\log(4N)}
\bigl(K-q_++1-\nu\bigr)
\right].
\label{eq:signal-exponential-error}
\end{equation}
\end{corollary}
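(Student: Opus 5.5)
The plan is to combine the exact error formula \eqref{eq:optimal-signal-error} with the eigenvalue bound \eqref{eq:eig-upper} of Corollary~\ref{cor:eigenvalue-decay}. By \eqref{eq:optimal-signal-error}, the left-hand side of \eqref{eq:signal-exponential-error} equals $\sqrt{\lambda_K}$. Since $q_+\le K\le N-1$, the index $K$ lies in the range where \eqref{eq:eig-upper} applies. That bound gives
\[
\lambda_K\le 8\exp\!\left[-a_N\bigl(K-q_++1-\nu\bigr)\right],
\qquad a_N=\frac{\pi^2}{\nu\log(4N)}.
\]

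Taking square roots, $\sqrt8=2\sqrt2$, and the exponent is halved. The result is
\[
\sqrt{\lambda_K}\le 2\sqrt2\exp\!\left[-\frac{\pi^2}{2\nu\log(4N)}\bigl(K-q_++1-\nu\bigr)\right],
\]
which is exactly \eqref{eq:signal-exponential-error}.

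No step is a real obstacle. The only point to check is that \eqref{eq:optimal-signal-error} holds with $V_K$ spanned by the leading eigenvectors. This follows from $R_{N,S}R_{N,S}^*=B_{N,S}$: the singular values of $R_{N,S}$ are $\sqrt{\lambda_k}$, and projecting onto the top $K$ left singular vectors leaves operator norm $\sqrt{\lambda_K}$. The paper already records this before the statement.

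The hypothesis $K\le N-1$ guarantees that $\lambda_K$ is defined. Note that when the exponent is negative, the bound exceeds $2\sqrt2$ and is trivially true, but no case split is needed, since \eqref{eq:eig-upper} holds for all $q_+\le k\le N-1$.
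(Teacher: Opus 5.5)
Your proposal is correct and follows the paper's proof: identify the left-hand side with $\sqrt{\lambda_K}$ via \eqref{eq:optimal-signal-error}, apply \eqref{eq:eig-upper} at $k=K$, and take square roots. This uses $\sqrt{8}=2\sqrt2$ and $a_N/2=\pi^2/(2\nu\log(4N))$.
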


\begin{proof}
By  \eqref{eq:optimal-signal-error}, the left-hand side of
\eqref{eq:signal-exponential-error} equals $\sqrt{\lambda_K}$. Applying
\eqref{eq:eig-upper} and taking square roots gives the result.
\end{proof}

The local estimate of Lemma~\ref{lem:local-concentration}
also gives a comparison of approximation dimensions when the
frequency set is changed.

\begin{corollary}[Approximation dimension under frequency changes]
\label{cor:signal-perturbation}
Let $S,S_0\subset\T$ be finite unions of intervals with
$0<|S|,|S_0|<1$, and set $E_+:=S\setminus S_0$ and
$\eta_+:=\omega_N(E_+)$.
If $0<\delta<1$ and $\eta_+<\delta^2$, then
\begin{equation}
d_\delta(N,S)
\le
d_{\sqrt{\delta^2-\eta_+}}(N,S_0).
\label{eq:signal-dimension-perturbation}
\end{equation}

If, in addition, $0<\delta<1/\sqrt{2}$ and
$\nu_0=\#\partial_{\T}S_0$, then
\begin{equation}
\begin{aligned}
d_\delta(N,S)
\le
\min\Bigl\{
N,\,
\ceil{N|S_0|}+\nu_0-2
+\nu_0\Phi_N(\delta^2-\eta_+)
\Bigr\}.
\end{aligned}
\label{eq:signal-dimension-local}
\end{equation}

In particular, if $S\subseteq J$, where
$J\subset\T$ is an interval with $0<|J|<1$,
then, for every $0<\delta<1$,
$d_\delta(N,S)\le d_\delta(N,J)$.
Consequently, for $0<\delta<1/\sqrt{2}$,
\begin{equation}
d_\delta(N,S)
\le
\min\left\{
N,\,
\ceil{N|J|}+2\Phi_N(\delta^2)
\right\}.
\label{eq:signal-small-gap-dimension}
\end{equation}
\end{corollary}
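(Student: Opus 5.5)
The plan is to compare eigenvalues of $B_{N,S}$ and $B_{N,S_0}$ using only the positive part of the change. We have
\[
B_{N,S}=B_{N,S\cap S_0}+B_{N,E_+}\preceq B_{N,S_0}+B_{N,E_+},
\]
since $B_{N,S\cap S_0}\preceq B_{N,S_0}$ because $S\cap S_0\subset S_0$. By Lemma~\ref{lem:local-concentration}, $\|B_{N,E_+}\|\le\omega_N(E_+)=\eta_+$. Hence $B_{N,S}\preceq B_{N,S_0}+\eta_+ I$, and Weyl's monotonicity gives
\[
\lambda_k(N,S)\le\lambda_k(N,S_0)+\eta_+\qquad\text{for every }k.
\]
By \eqref{eq:dimension-eigenvalue-count}, any $k$ with $\lambda_k(N,S)>\delta^2$ satisfies $\lambda_k(N,S_0)>\delta^2-\eta_+>0$. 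Since the eigenvalues are ordered and the count is of an initial segment, this gives $d_\delta(N,S)\le\#\{k:\lambda_k(N,S_0)>\delta^2-\eta_+\}=d_{\sqrt{\delta^2-\eta_+}}(N,S_0)$, which is \eqref{eq:signal-dimension-perturbation}.

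For \eqref{eq:signal-dimension-local}, I would not apply Theorem~\ref{thm:signal-effective-dimension} verbatim, since $\sqrt{\delta^2-\eta_+}<1/\sqrt2$ holds automatically but the formula must be matched exactly. Instead I would repeat its proof for $S_0$ with threshold $t:=\delta^2-\eta_+\in(0,1/2)$. Let $q_+^{(0)}=\ceil{N|S_0|}+\nu_0-2$. Suppose $q_+^{(0)}\le N-1$. Then Proposition~\ref{prop:crossing} gives $\lambda_k(N,S_0)\le1/2<1-t$ for $k\ge q_+^{(0)}$. Every such $k$ with $\lambda_k(N,S_0)>t$ is therefore counted by $M_t(N,S_0)$. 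Theorem~\ref{thm:first-main} bounds $M_t(N,S_0)\le\nu_0\Phi_N(t)$. Combining these, $d_\delta(N,S)\le q_+^{(0)}+\nu_0\Phi_N(\delta^2-\eta_+)$. The bound $d_\delta\le N$ is trivial and also covers the case $q_+^{(0)}>N-1$.

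For the final assertions, take $S_0=J$. Then $E_+=S\setminus J$ has measure zero, so $B_{N,E_+}=0$ and $\eta_+=0$. The first part (or directly $B_{N,S}\preceq B_{N,J}$) then gives $d_\delta(N,S)\le d_\delta(N,J)$ for all $0<\delta<1$; here the strict hypothesis $\eta_+<\delta^2$ holds since $\eta_+=0$. With $\nu_0=\#\partial_\T J=2$, the formula \eqref{eq:signal-dimension-local} becomes $\ceil{N|J|}+2\Phi_N(\delta^2)$, which is \eqref{eq:signal-small-gap-dimension}.

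I expect the only delicate point to be the reduction to the one-sided operator inequality. The two-sided bound \eqref{eq:local-perturbation-norm} would give the weaker quantity $\omega_N(S\triangle S_0)$. Using $S\subset(S\cap S_0)\cup E_+$ together with positivity of $B_{N,E}$ gives the sharper $\eta_+$.
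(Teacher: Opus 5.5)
Your proposal is correct and follows the paper's proof essentially step for step. You use the same one-sided bound $B_{N,S}\preceq B_{N,S_0}+B_{N,E_+}\preceq B_{N,S_0}+\eta_+I$ via Lemma~\ref{lem:local-concentration}, then eigenvalue monotonicity and \eqref{eq:dimension-eigenvalue-count}, and finally the specialization $S_0=J$. The only difference is that you re-derive the second bound by hand. The paper simply applies Theorem~\ref{thm:signal-effective-dimension} to $S_0$ at accuracy $\sqrt{\delta^2-\eta_+}$, which already yields $\Phi_N(\delta^2-\eta_+)$ exactly, so the re-derivation is not needed.
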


\begin{proof}
Positivity and Lemma~\ref{lem:local-concentration} give
\[
B_{N,S}
=B_{N,S\cap S_0}+B_{N,E_+}
\preceq B_{N,S_0}+B_{N,E_+}
\preceq B_{N,S_0}+\eta_+I.
\]
Thus the min--max principle yields
$\lambda_k(N,S)\le\lambda_k(N,S_0)+\eta_+$.
When $\eta_+<\delta^2$, every index with
$\lambda_k(N,S)>\delta^2$ consequently satisfies
$\lambda_k(N,S_0)>\delta^2-\eta_+$.
Equation~\eqref{eq:dimension-eigenvalue-count} gives
\eqref{eq:signal-dimension-perturbation}.
If also $\delta<1/\sqrt2$, apply
Theorem~\ref{thm:signal-effective-dimension} to $S_0$ at accuracy
$\sqrt{\delta^2-\eta_+}$ to obtain \eqref{eq:signal-dimension-local}.

For $S\subseteq J$, taking $S_0=J$ gives $E_+=\varnothing$ and
$\eta_+=0$. Hence $d_\delta(N,S)\le d_\delta(N,J)$, and
\eqref{eq:signal-small-gap-dimension} follows from
Theorem~\ref{thm:signal-effective-dimension} because $J$ has two
boundary points.
\end{proof}
For $S=J\setminus G$, the inclusion $S\subseteq J$ gives
\eqref{eq:signal-small-gap-dimension} without any condition on the
number or total length of the gaps. The plunge count, in contrast,
need not be monotone under inclusion. For nonnested sets,
\eqref{eq:signal-dimension-perturbation} controls the approximation
dimension through the local concentration of the added frequencies.

There is also a mean-square interpretation
\cite{Davenport2012,wakin2017approximating}.
If $Y\in\C^N$ has mean zero and covariance $B_{N,S}$, then the
Karhunen--Lo\`eve principle gives
\[
\min_{\dim V=K}\mathbb E\|Y-P_VY\|_2^2
=\mathbb E\|Y-P_{V_K}Y\|_2^2
=\sum_{k=K}^{N-1}\lambda_k.
\]
For $q_+\le K\le N-1$, Corollary~\ref{cor:partial-sums} bounds this
error by the right-hand side of~\eqref{eq:trailing-sum}.

\section{Continuous multiband limiting operators}
\label{sec:continuous}

We transfer the discrete estimates to the continuous multiband
operator by a step-function approximation. 

Let $\Omega\subset\R$ be a bounded union of $M$ pairwise disjoint
intervals. Define $K_\Omega(t):=\int_\Omega e^{i2\pi\xi t}\,d\xi$
and the integral operator $\mathcal T_\Omega:L^2([0,1])\to L^2([0,1])$ by
\begin{equation}
(\mathcal T_\Omega f)(x)
=
\int_0^1K_\Omega(x-y)f(y)\,dy.
\label{eq:continuous-op}
\end{equation}
Since $K_\Omega(-t)=\overline{K_\Omega(t)}$, the operator is
self-adjoint. It is obtained by extending a function on $[0,1]$
by zero, projecting onto frequencies in $\Omega$, and then restricting
back to $[0,1]$. Therefore $0\preceq\mathcal T_\Omega\preceq I$.
It is compact, and we denote its eigenvalues by
$1\ge\widetilde\lambda_0(\Omega)\ge\widetilde\lambda_1(\Omega)\ge\cdots\ge0$.
When the frequency set is clear, we write
$\widetilde\lambda_k=\widetilde\lambda_k(\Omega)$.
For $0<t<1/2$, set
$\widetilde M_t(\Omega):=\#\{k:t<\widetilde\lambda_k<1-t\}$.

We use the following discretization to connect the discrete and continuous problems.

\begin{lemma}[Discrete-to-continuous convergence]
\label{lem:nystrom} For all sufficiently large $N$, 
set $S_N:=\Omega/N=\{\xi/N:\xi\in\Omega\}$. 
Let $B_{N,S_N}$ be the corresponding discrete multiband prolate matrix.  Then, for every fixed $k$,
\begin{equation}
\lambda_k(N,S_N)\longrightarrow\widetilde\lambda_k
\qquad\text{as }N\to\infty.
\label{eq:eigen-convergence}
\end{equation}
\end{lemma}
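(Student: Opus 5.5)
The plan is to realize $B_{N,S_N}$ as a Galerkin (step-function) compression of $\mathcal T_\Omega$ plus a small error, and then to pass to the limit with operator-norm convergence and Weyl's inequality. For $N$ large, $\Omega/N\subset(-1/2,1/2)$, so $S_N$ is a genuine frequency set. Entrywise,
\[
B_{N,S_N}[m,n]=\int_{\Omega/N}e^{i2\pi\xi(m-n)}d\xi=\frac1N K_\Omega\!\left(\frac{m-n}{N}\right).
\]
Let $P_N$ be the orthogonal projection of $L^2([0,1])$ onto step functions constant on the cells $I_{N,m}=[m/N,(m+1)/N)$. With the orthonormal basis $e_m=\sqrt N\,\1_{I_{N,m}}$, the compression $P_N\mathcal T_\Omega P_N$ has matrix
\[
A_N[m,n]=N\int_{I_{N,m}}\int_{I_{N,n}}K_\Omega(x-y)\,dy\,dx .
\]
The nonzero eigenvalues of $P_N\mathcal T_\Omega P_N$ coincide with those of $A_N$, and the remaining ones are $0$.

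The key steps, in order, are the following.

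First, $\|A_N-B_{N,S_N}\|\to0$. The kernel $K_\Omega$ is smooth with $|K_\Omega'|\le 2\pi|\Omega|\sup_{\xi\in\Omega}|\xi|=:L_\Omega$. Hence for $x\in I_{N,m}$ and $y\in I_{N,n}$,
\[
\bigl|K_\Omega(x-y)-K_\Omega((m-n)/N)\bigr|\le 2L_\Omega/N .
\]
This gives $|A_N[m,n]-B_{N,S_N}[m,n]|\le 2L_\Omega/N^2$. Therefore the Frobenius norm of the difference is at most $2L_\Omega/N$, and so is its operator norm.

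Second, $\|P_N\mathcal T_\Omega P_N-\mathcal T_\Omega\|\to0$. Because $K_\Omega$ is continuous on $[-1,1]$, $\mathcal T_\Omega$ is Hilbert--Schmidt. Also, $P_N\to I$ strongly, and strong convergence composed with a compact operator gives norm convergence: $\|P_N\mathcal T_\Omega-\mathcal T_\Omega\|\to0$, and by adjointness $\|\mathcal T_\Omega P_N-\mathcal T_\Omega\|\to0$. Alternatively, one can bound the Hilbert--Schmidt norm of $(x,y)\mapsto K_\Omega(x-y)-\text{(cell average)}$ by $2L_\Omega/N$ via the same Lipschitz estimate. I would use this alternative, because it is explicit and avoids an abstract argument.

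Third, combine by Weyl. I would embed $B_{N,S_N}$ as the operator $\sum_{m,n}B_{N,S_N}[m,n]\,e_m\otimes e_n^*$ on $L^2([0,1])$. Its eigenvalues are those of $B_{N,S_N}$ together with zeros, and ordering them decreasingly leaves the $k$-th one equal to $\lambda_k(N,S_N)$ for $k<N$, since the matrix is positive. By the first two steps this operator is within $O(1/N)$ of $\mathcal T_\Omega$ in operator norm. Weyl's inequality for compact self-adjoint positive operators, i.e. the min--max characterization of the $k$-th largest eigenvalue, then gives
\[
|\lambda_k(N,S_N)-\widetilde\lambda_k|\le C_\Omega/N ,
\]
which is stronger than \eqref{eq:eigen-convergence}.

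The main obstacle I expect is bookkeeping rather than depth. One point is the positivity and ordering convention: the paper's $\lambda_k$ are indexed from $0$, and the min--max principle must be applied to positive compact operators so that the appended zeros do not disturb the top eigenvalues. The other point is the uniform Lipschitz bound on the kernel, which is where boundedness of $\Omega$ enters. No boundary-point structure of $\Omega$ is needed for this lemma.
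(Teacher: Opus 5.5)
Your proof is correct and is essentially the paper's argument. The paper also embeds $B_{N,S_N}$ by the step-function isometry, which gives the integral operator with piecewise-constant kernel $K_\Omega((m-n)/N)$ on $J_m\times J_n$. It then bounds the Hilbert--Schmidt distance of this operator to $\mathcal T_\Omega$ using $|(m-n)/N-(x-y)|\le 2/N$ and uniform continuity of $K_\Omega$, and concludes by min--max. Your passage through the Galerkin compression $P_N\mathcal T_\Omega P_N$ is an unnecessary detour: the operator in your third step is exactly that piecewise-constant-kernel operator and can be compared with $\mathcal T_\Omega$ directly. Using the Lipschitz bound instead of uniform continuity does give you the explicit $O(1/N)$ rate.
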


\begin{proof} Since \(\Omega\) is bounded,  $S_N\subset[-1/2,1/2]$ for all sufficiently  large $N$.
For $n=0,\ldots,N-1$, let $J_n=[n/N,(n+1)/N)$, with $1$
included in $J_{N-1}$, and define the step-function map
$U_N:\C^N\to L^2([0,1])$  by $(U_Nv)(x)=\sqrt N\,v[n]$ for $x\in J_n$.
Then $U_N$ is an isometry.
Let $A_N$ be the integral operator with kernel
\[
K_N(x,y)
=
K_\Omega\!\left(\frac{m-n}{N}\right),
\qquad
x\in J_m,\quad y\in J_n.
\]
Since
$K_{S_N}(m-n) = \frac1N K_\Omega\!\left(\frac{m-n}{N}\right)$,
a direct calculation gives
$A_N=U_NB_{N,S_N}U_N^*$.
Hence the nonzero eigenvalues of $A_N$ are those of $B_{N,S_N}$.

For $x\in J_m$ and $y\in J_n$,
$\left| \frac{m-n}{N}-(x-y) \right| \le \frac2N$.
Since $K_\Omega$ is uniformly continuous on $[-1,1]$,
\[
\begin{aligned}
\|A_N-\mathcal T_\Omega\|_{\mathrm{HS}}^2
&=
\int_0^1\int_0^1
\left|K_N(x,y)-K_\Omega(x-y)\right|^2\,dx\,dy\\
&\le
\sup_{\substack{s,t\in[-1,1]\\ |s-t|\le 2/N}}
|K_\Omega(s)-K_\Omega(t)|^2
\longrightarrow 0.
\end{aligned}
\] 

Thus the convergence also holds in operator norm. The min--max principle
for compact self-adjoint operators now gives
$\lambda_k(N,S_N)\longrightarrow \widetilde\lambda_k(\Omega)$
for every fixed $k$.
\end{proof}

Passing Theorem~\ref{thm:second-main} to the limit gives the following explicit bound.

\begin{theorem}[Continuous multiband transition bound]
\label{thm:continuous}
Let $\Omega\subset\R$  have $M$ components, each of which is a bounded interval. For $\eps\in(0,1/2)$ set $q_\eps=\eps(1-\eps)$ and
\begin{align}
\kappa_0
&:=
\ceil{
\frac{\log(8|\Omega|+4)}{\pi^2}
\log\!\left(\frac{5}{q_\eps}\right)},
\label{eq:cont-k0}\\
\kappa_1
&:=
\ceil{
\frac{1}{2\log(48/\pi)}
\log\!\left(\frac{56000M^2}{\pi q_\eps}\right)}.
\label{eq:cont-k1}
\end{align}
Then
\begin{equation}
\widetilde M_\eps(\Omega)\le 2M(\kappa_0+\kappa_1).
\label{eq:continuous-bound}
\end{equation}
\end{theorem}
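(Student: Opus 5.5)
The plan is to pass Theorem~\ref{thm:second-main} to the limit along the discretization of Lemma~\ref{lem:nystrom}. Put $S_N:=\Omega/N$. For all large $N$ we have $S_N\subset(-1/2,1/2)$, and $S_N$ is a union of $M$ pairwise disjoint intervals. Its measure is $|S_N|=|\Omega|/N\le 1/2$, and the time--bandwidth product is exactly $N|S_N|=|\Omega|$. Consequently the integers $k_0$ and $k_1$ of \eqref{eq:k0-main}--\eqref{eq:k1-main} computed for $(N,S_N)$ coincide with $\kappa_0$ and $\kappa_1$, independently of $N$. Theorem~\ref{thm:second-main} therefore gives the uniform bound
\[
M_\eps(N,S_N)\le 2M(\kappa_0+\kappa_1)=:R\qquad\text{for all large }N.
\]

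I also need to check that the components of $S_N$ are separated inside the torus, so that the hypotheses of Theorem~\ref{thm:second-main} hold with the same $M$. This is clear once $N$ is large enough that $\Omega/N$ lies well inside $(-1/2,1/2)$. The closures of the components of $\Omega$ might touch; the statement says $\Omega$ has $M$ components, so I take these to be the intervals and merge any touching ones. Merging can only decrease the count, and the bound is monotone in $M$.

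The remaining step, and the main delicacy, is transferring a count of eigenvalues in the open window $(\eps,1-\eps)$ through a limit. Such counts are not continuous, since eigenvalues can converge to the endpoints $\eps$ or $1-\eps$ from inside. The direction we need is nevertheless fine. Suppose $\widetilde M_\eps(\Omega)\ge R+1$. Then there are indices $k_1<\dots<k_{R+1}$ with $\widetilde\lambda_{k_j}\in(\eps,1-\eps)$, and this is a finite set of indices. Each eigenvalue lies strictly inside the open interval, so Lemma~\ref{lem:nystrom} gives $\lambda_{k_j}(N,S_N)\in(\eps,1-\eps)$ for all $j$ once $N$ is large. For such $N$ we get $M_\eps(N,S_N)\ge R+1$, contradicting the uniform bound above. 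Hence $\widetilde M_\eps(\Omega)\le R$, which is \eqref{eq:continuous-bound}.

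A technicality is that $\widetilde M_\eps(\Omega)$ is finite a priori. The argument handles this automatically, because any $R+1$ eigenvalues in the window would already give a contradiction. A further subtlety is that the discrete matrices have only $N$ eigenvalues while the indices $k_j$ are fixed. Since $N\to\infty$, we may take $N>k_{R+1}$, so every $\lambda_{k_j}(N,S_N)$ is defined. With these points checked, the openness of the window is the only place where care is needed.
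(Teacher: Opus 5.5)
Your proposal is correct and follows essentially the same route as the paper. You set $S_N=\Omega/N$, use $N|S_N|=|\Omega|$ so that Theorem~\ref{thm:second-main} gives a bound independent of $N$, and then use Lemma~\ref{lem:nystrom} to obtain $\widetilde M_\eps(\Omega)\le\liminf_{N\to\infty} M_\eps(N,S_N)$, since eigenvalues strictly inside $(\eps,1-\eps)$ stay inside for large $N$. Your remarks on merging components whose closures touch, and on taking $N>k_{R+1}$, are valid treatments of details the paper leaves implicit.
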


\begin{proof}
Set $S_N=\Omega/N$. By Lemma~\ref{lem:nystrom},
\[
\widetilde M_\eps(\Omega)
\le
\liminf_{N\to\infty} M_\eps(N,S_N).
\]
Indeed, every eigenvalue lying strictly in \((\varepsilon,1-\varepsilon)\) remains in this interval for all sufficiently large \(N\).  
For all sufficiently large $N$,
$|S_N|<\frac12$ and $N|S_N|=|\Omega|$,
and $S_N$ has the same $M$ components as $\Omega$. Therefore
Theorem~\ref{thm:second-main} gives
$M_\eps(N,S_N) \le 2M(\kappa_0+\kappa_1)$.
Taking the lower limit proves \eqref{eq:continuous-bound}.
\end{proof}


Applying Proposition~\ref{prop:frequency-set-stability} before the
limit also gives a small-gap estimate. Related componentwise and
symmetric-difference comparisons appear
in~\cite{Azimifard2026MeasureSensitive}. For a bounded interval
$J\subset\R$ and $0<t<1/2$, set
\[
\Psi_J(t)
:=
2\left[
\ceil{
\frac{\log(8|J|+4)}{\pi^2}
\log\!\left(\frac{5}{t(1-t)}\right)
}
+
\ceil{
\frac{1}{2\log(48/\pi)}
\log\!\left(\frac{56000}{\pi t(1-t)}\right)
}
\right].
\]
By Theorem~\ref{thm:continuous}, $\Psi_J(t)$ is an upper bound for the plunge count
of the operator associated with $J$.

\begin{corollary}[Small gaps in the continuous problem]\label{cor:continuous-small-gaps}
Let $J\subset\R$ be a bounded interval and let $G\subset J$ be a finite
union of pairwise disjoint intervals. Set
$
\Omega:=J\setminus G$ and
$g:=|G|.
$
Then, for every $0<\eps<1/2$,
\[
\widetilde M_\eps(\Omega)\le\Psi_J(\eps/2)+\frac{2g}{\eps}.
\]
If $g<\eps$, then $\widetilde M_\eps(\Omega)\le\Psi_J(\eps-g)$.
In particular, if $g\le\eps/2$, then
$\widetilde M_\eps(\Omega)\le\Psi_J(\eps/2)$.
\end{corollary}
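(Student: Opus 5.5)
We prove Corollary~\ref{cor:continuous-small-gaps} by the same discretization used for Theorem~\ref{thm:continuous}, but we apply Proposition~\ref{prop:frequency-set-stability} at the discrete level before passing to the limit. Set $S_N:=\Omega/N$ and $J_N:=J/N$. For all sufficiently large $N$, both sets lie in $(-1/2,1/2)$ and have measure less than $1/2$. Their symmetric difference is $G/N$, so $N|S_N\triangle J_N|=|G|=g$. This identity is the reason the discrete total-measure estimate transfers without loss: the factor $N$ in \eqref{eq:set-perturbation-plunge-additive} exactly cancels the scaling by $1/N$.

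First, by Lemma~\ref{lem:nystrom} and the argument in the proof of Theorem~\ref{thm:continuous}, we have
\[
\widetilde M_\eps(\Omega)\le\liminf_{N\to\infty}M_\eps(N,S_N).
\]
Second, we apply \eqref{eq:set-perturbation-plunge-additive} with $S=S_N$, $S_0=J_N$ and $\tau=\eps/2$. This gives
\[
M_\eps(N,S_N)\le M_{\eps/2}(N,J_N)+\frac{2g}{\eps}.
\]
Third, we bound $M_{\eps/2}(N,J_N)$ by Theorem~\ref{thm:second-main}, applied with $M=1$, $|J_N|\le1/2$, $N|J_N|=|J|$ and threshold $t=\eps/2\in(0,1/2)$. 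The resulting bound $2(k_0+k_1)$ is exactly $\Psi_J(\eps/2)$, since $k_0$ and $k_1$ become the two ceilings in the definition of $\Psi_J$ when $M=1$. This bound does not depend on $N$, so taking the lower limit yields the first claim.

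For the case $g<\eps$, we use \eqref{eq:set-perturbation-plunge-shifted} instead, whose hypothesis $N|S_N\triangle J_N|=g<\eps$ holds for every large $N$. It gives $M_\eps(N,S_N)\le M_{\eps-g}(N,J_N)$. Since $0<\eps-g<1/2$, Theorem~\ref{thm:second-main} bounds the right side by $\Psi_J(\eps-g)$ uniformly in $N$, and the lower limit again gives the claim.

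The final statement follows because $\Psi_J$ is nonincreasing on $(0,1/2)$. Indeed, $t(1-t)$ is increasing there, so each ceiling in $\Psi_J(t)$ is nonincreasing. If $g\le\eps/2$, then $\eps-g\ge\eps/2$, and therefore $\Psi_J(\eps-g)\le\Psi_J(\eps/2)$.

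The only points needing care are bookkeeping. One is checking that $J_N$ satisfies the hypotheses of Theorem~\ref{thm:second-main} for large $N$; this holds because $J$ is bounded. Another is the degenerate case $g=0$, which is immediate. Proposition~\ref{prop:frequency-set-stability} applies to arbitrary measurable sets on $\T$, so no structural condition on $G$ beyond measurability is needed there.
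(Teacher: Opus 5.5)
Your proposal is correct and follows the paper's proof essentially step for step. You discretize via $S_N=\Omega/N$ and $J_N=J/N$, and apply Proposition~\ref{prop:frequency-set-stability} (additive with $\tau=\eps/2$, or shifted when $g<\eps$) together with Theorem~\ref{thm:second-main} for the single interval $J_N$ to get bounds independent of $N$. You then pass to the lower limit via Lemma~\ref{lem:nystrom}, and you check explicitly details the paper leaves implicit: $S_N\triangle J_N=G/N$, the identification $2(k_0+k_1)=\Psi_J$, and the monotonicity of $\Psi_J$.
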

 \begin{proof}
Set $S_N=\Omega/N$ and $J_N=J/N$.
For all sufficiently large $N$, both lie in $[-1/2,1/2]$,
$N|S_N\triangle J_N|=g$, and $N|J_N|=|J|$.
Proposition~\ref{prop:frequency-set-stability} and
Theorem~\ref{thm:second-main} give
\[
M_\eps(N,S_N)
\le M_{\eps/2}(N,J_N)+\frac{2g}{\eps}
\le\Psi_J(\eps/2)+\frac{2g}{\eps}.
\]
If $g<\eps$, \eqref{eq:set-perturbation-plunge-shifted} similarly gives
\[
M_\eps(N,S_N)\le M_{\eps-g}(N,J_N)\le\Psi_J(\eps-g).
\]
By Lemma~\ref{lem:nystrom},
$\widetilde M_\eps(\Omega)\le\liminf_{N\to\infty}M_\eps(N,S_N)$.
Passing to the lower limit proves both bounds.
When $g\le\eps/2$, monotonicity of $\Psi_J$ gives the last assertion.
\end{proof}

Thus, as in the discrete case, the estimate depends on the total length
of the gaps rather than on the number of gaps.


\section{Numerical examples}
\label{sec:numerics}
The examples address four questions: how close the two main bounds are
to computed transition counts, how the interval locations affect the
spectrum, how individual eigenvalues behave, and when the local
comparison improves the total-measure bound. These comparisons of bounds with
computed spectra do not establish optimality.

\subsection{Comparison of the transition bounds}
\label{ssec:transition}
\label{ssec:sharpness}
\label{ssec:crossover}
Figure~\ref{fig:spectra} compares three symmetric frequency sets with
$N=512$ and $|S|=0.2$, but with $\nu=2,4,8$ boundary points.
The transition is wider for the larger boundary counts in these
examples, in agreement with the dependence in Theorem~\ref{thm:first-main}.

\begin{figure}[htb]
\centering
\includegraphics[width=0.60\textwidth]{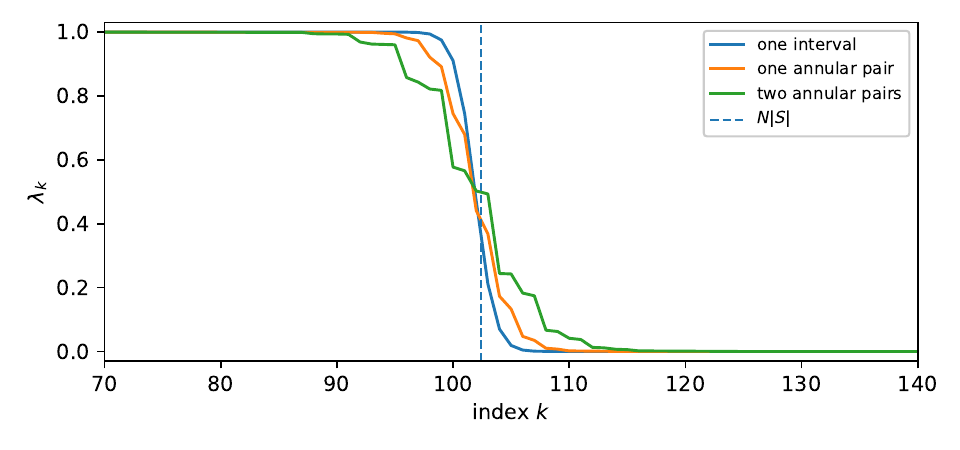}
\caption{Eigenvalues for $N=512$ and three frequency sets of measure
$0.2$: $[-0.1,0.1]$; $[-0.2,-0.1]\cup[0.1,0.2]$; and
$[-0.2,-0.15]\cup[-0.1,-0.05]\cup[0.05,0.1]\cup[0.15,0.2]$.
Their boundary counts are $2$, $4$, and $8$.
The vertical line marks $N|S|=102.4$.}
\label{fig:spectra}
\end{figure}
\FloatBarrier 

For a broader comparison, we use
\[
N\in\{64,128,256,512,1024,2048\},\qquad
|S|\in\{0.05,0.1,0.2,0.35\},\qquad M\in\{1,2,4,8\},
\]
with $\eps\in\{10^{-2},10^{-3},10^{-6}\}$.
Each set consists of $M$ intervals of length $|S|/M$, one centered in
each of the $M$ equal cells of $[-1/2,1/2)$.
Both bounds hold at all $288$ parameter combinations.
The ratio of~\eqref{eq:first-main} to the computed count ranges from
$1.17$ to $6.00$, with median $1.71$; the corresponding values
for~\eqref{eq:second-main-simple} are $1.65$, $8.75$, and $2.81$.
The first bound is smaller throughout this grid.

To determine when the time--bandwidth refinement is numerically
smaller, we extend the comparison to $
10^{-4}\le |S|\le \frac12$, and
$N\le65536$. For fixed $(N,M,\eps)$, write
\[
|S|^*=\sup\bigl\{s:\eqref{eq:second-main-simple}\text{ is smaller than }
\eqref{eq:first-main}\text{ when }|S|=s\bigr\},
\]
whenever such a value exists in the tested range.
Figure~\ref{fig:bounds} shows the comparison.
The largest observed crossover is $0.0140$, at $N=8192$, $M=1$,
and $\eps=10^{-6}$. Thus the refinement is useful here only for
small bandwidths; its crossover generally decreases with $M$.
For some choices with $\eps=10^{-2}$ there is no crossover in the
tested range. 

\begin{figure}[htb]
\centering
\includegraphics[width=0.99\textwidth]{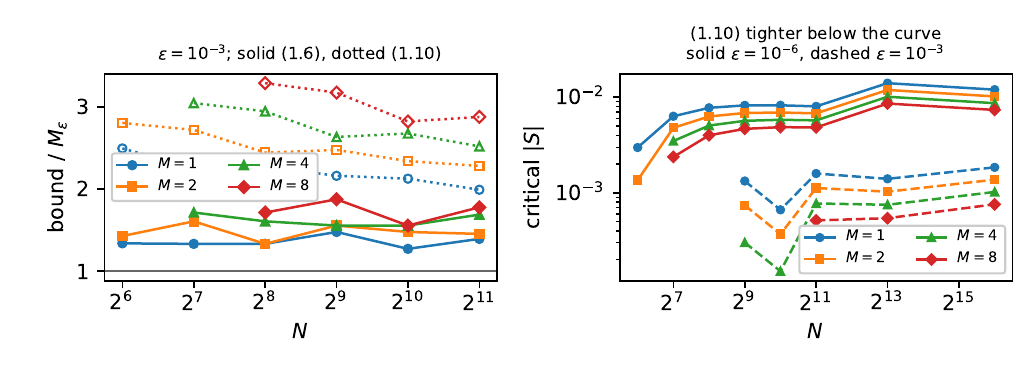}
\caption{Comparison of~\eqref{eq:first-main}
and~\eqref{eq:second-main-simple}. Left: median bound-to-count ratios
for $\eps=10^{-3}$, grouped by $N$ and $M$; solid markers denote
the first bound, and open markers with dotted lines the second.
Right: crossover bandwidth $|S|^*$; a missing marker means that no
crossover occurs in the tested range.}
\label{fig:bounds}
\end{figure}
\FloatBarrier 
\subsection{Dependence on the interval locations}
\label{ssec:location}
We fix $N=512$, $|S|=0.2$, and $M=4$, so $\nu=8$, and compare
$500$ random configurations. The four positive component widths are
chosen uniformly subject to having sum $0.2$; the four positive gap
lengths are chosen in the same way with sum $0.8$. Each configuration
is translated, if necessary, to avoid crossing the identified endpoints,
which leaves the eigenvalues unchanged by Lemma~\ref{lem:frequency-translation}.

For $\eps=10^{-6}$, the plunge counts range from $36$ to $59$,
although every configuration has the same $N$, $|S|$, and $\nu$.
All satisfy both bounds; the largest count is about $61\%$ of
\eqref{eq:first-main}. Figure~\ref{fig:location} compares the counts
with the smallest component width $w_{\min}$ and gap $g_{\min}$.
Their Spearman correlations with the count are $0.79$ and $0.38$,
respectively. The counts tend to decrease when $Nw_{\min}$ is near
$1$ or smaller.

For $\eps=10^{-6}$ and $10^{-3}$, the equal-band configuration has
a larger count than all $500$ random configurations; for $\eps=10^{-2}$,
two random configurations have larger counts. These observations
suggest finer dependence on the interval configuration, but do not
identify a maximizing set. The experiments vary the whole configuration,
not a prescribed small modification of one reference set.

\begin{figure}[htb]
\centering
\includegraphics[width=0.99\textwidth]{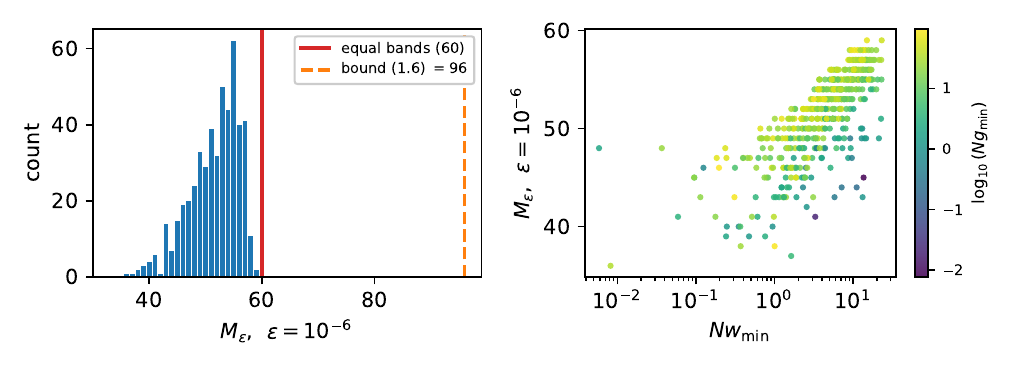}
\caption{The $500$ configurations with $N=512$, $|S|=0.2$, and $\nu=8$.
Left: distribution of $M_\eps$ for $\eps=10^{-6}$.
Right: $M_\eps$ against $Nw_{\min}$; the color of each point records
the smallest gap through $\log_{10}(Ng_{\min})$.}
\label{fig:location}
\end{figure}
 \subsection{Individual eigenvalues and the crossing index}
\label{ssec:asym}
\label{ssec:structural}
Corollary~\ref{cor:eigenvalue-decay} bounds $1-\lambda_k$ before the
transition and $\lambda_k$ after it with rate
$a_N=\pi^2/(\nu\log(4N))$.
The left panel of Figure~\ref{fig:decay} compares these bounds with
the eigenvalues. Across the cases considered,
fitting straight lines to $\log(1-\lambda_k)$ and $\log\lambda_k$
where the corresponding quantities lie between $10^{-13}$ and $10^{-1}$
gives rates between $2.69a_N$ and $3.68a_N$. These fits describe the
computed spectra, rather than establish an optimal rate.

The right panel compares the nonsymmetric set
$S=[-0.31,-0.22]\cup[0.05,0.11]\cup[0.27,0.33]$, for which
$|S|=0.21$ and $M=3$, with a symmetric set of the same measure and
component count. For $N=2^j$, $6\le j\le11$, and the three thresholds
above, the nonsymmetric counts satisfy both bounds and differ from
the symmetric counts by at most $2$.  This comparison concerns only these examples and does not imply that
changing the symmetry leaves the spectrum unchanged.  

\begin{figure}[htb]
\centering
\includegraphics[width=0.99\textwidth]{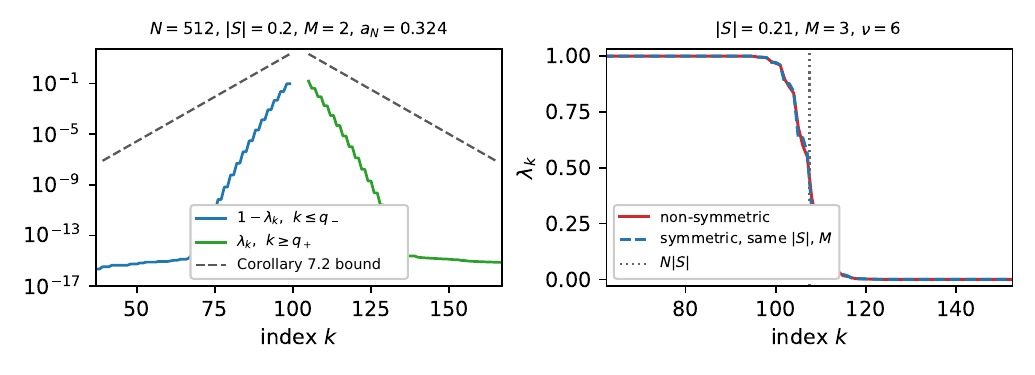}
\caption{Left: eigenvalue decay for $N=512$, $|S|=0.2$, and $M=2$,
with the bounds of Corollary~\ref{cor:eigenvalue-decay}; the flattening
near $10^{-16}$ reflects double-precision arithmetic.
Right: the nonsymmetric set specified in the text and the symmetric
comparison set with the same measure and component count.}
\label{fig:decay}
\end{figure}
\FloatBarrier 
For the equal-band grid, the crossing index
$k_{1/2}:=\min\{k:\lambda_k\le1/2\}$ lies in $[q_-,q_+]$ and satisfies
$|k_{1/2}-N|S||\le0.2\nu$ in the computations.
For the random configurations above, it takes only the values
$101,102,103$, within the theoretical window $[95,109]$. 
These examples do not imply that the distance of the crossing index
from $N|S|$ is generally proportional to $\nu$.  
\subsection{Many small gaps and the local estimate}
\label{ssec:local-gaps}
We illustrate Proposition~\ref{prop:local-global-separation} with
$\eps=10^{-2}$, $a=\eps/8$, and $J=[0,1/2]$.
For each even $N$ in Table~\ref{tab:local-gaps} and $0\le j<N/2$, set
\[
u_j=\frac14+\frac12\left((j+1)\sqrt2-\floor{(j+1)\sqrt2}\right),
\qquad
G_j=\left[\frac{j+u_j}{N},\frac{j+u_j+a}{N}\right].
\]
Put $G=\bigcup_jG_j$ and $S_N=J\setminus G$.
Each gap lies inside its cell, $|G|=\eps/16$, and
$\mathcal D_N(G)\le\eps/4$.
The total-measure bound using $S_0=J$, $\tau=\eps/2$, and the local
bound are, respectively,
\[
U_{\mathrm{total}}(N):=2\Phi_N(\eps/2)+\frac N8,
\qquad U_{\mathrm{local}}(N):=2\Phi_N(\eps/2).
\]
The first is a particular comparison, not the optimum over all sets.
Proposition~\ref{prop:local-global-separation} separately proves that
every right-hand side of~\eqref{eq:effective-boundary-additive} is at
least $N/16$ for this family.

\begin{table}[htb]
\centering
\caption{Dispersed gaps at $\eps=10^{-2}$. The fourth column is the
computed count; the last two are theoretical upper bounds.
$U_{\mathrm{total}}$ uses $S_0=J$ and $\tau=\eps/2$, without optimization.}
\label{tab:local-gaps}
\small
\begin{tabular}{rrrrrr}
\toprule
$N$ & Gaps & $N|G|$ & $M_\eps$ & $U_{\mathrm{total}}$ & $U_{\mathrm{local}}$\\
\midrule
64 & 32 & 0.04 & 6 & 16 & 8 \\
128 & 64 & 0.08 & 6 & 26 & 10 \\
256 & 128 & 0.16 & 7 & 42 & 10 \\
512 & 256 & 0.32 & 8 & 76 & 12 \\
1024 & 512 & 0.64 & 8 & 140 & 12 \\
\bottomrule
\end{tabular}
\end{table}
\FloatBarrier

Here $N|G|>\varepsilon$, so the condition required for the shifted
total-measure bound does not hold,  
but $\omega_N(G)\le\eps/2$ permits the local comparison.
At $N=1024$, the computed count is $8$ and the local upper bound is
$12$, whereas every right-hand side of
\eqref{eq:effective-boundary-additive} is at least $64$.
The latter is a lower bound on the competing bound expressions,
not on $M_\eps(N,S_N)$.

For this table, the matrices were assembled from exact formulas for the
interval integrals and computed in double precision. Direct integration
over $S_N$ was compared with $B_{N,J}-B_{N,G}$, and the computed
eigenvalues were well separated from the thresholds used to define the plunge region.

\section{Conclusion}\label{sec:conclusion}
The representation of multiband prolate matrices in terms of their
frequency endpoints gives explicit bounds for the eigenvalue transition
with logarithmic dependence on the threshold. The first bound is linear
in the number of frequency boundary points, while the second replaces
the main $\log N$ term by a logarithm involving the time--bandwidth
product $N|S|$. 

Comparing a frequency set with simpler sets gives bounds that also take
into account how small changes in the set are distributed. In particular,
for sets with many small gaps, these bounds can remain logarithmic in $N$
even when bounds based only on the total size of the gaps are much larger.
The estimates for individual eigenvalues, signal approximation, and the
continuous problem follow from these transition bounds.

The numerical examples indicate that the locations of the frequency
intervals contain information not captured by the number of boundary
points alone. Understanding this dependence and locating the $1/2$
crossing more precisely remain natural questions.

\appendix

\section{The Chebyshev estimate}
\label{app:chebyshev}

We include the estimate used in Lemma~\ref{lem:scalar-near}. The calculation follows \cite[Section~4.4 and Appendix~C]{Karnik21}.
For $W>0$, define
$g_W(t):=\frac{\sin(2\pi Wt)}{\pi t}
=\int_{-W}^{W}e^{i2\pi ft}\,df$.

\begin{lemma}[Derivative bound]
\label{lem:derivative-sinc}
For every integer $k\ge0$ and every $t\ne0$,
\begin{equation}
|g_W^{(k)}(t)|
\le
(2\pi W)^k
\min\left\{\frac{2W}{k+1},\frac{2}{\pi|t|}\right\}.
\label{eq:derivative-sinc}
\end{equation}
\end{lemma}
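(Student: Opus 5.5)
We will work from the integral representation $g_W(t)=\int_{-W}^{W}e^{i2\pi ft}\,df$ and differentiate under the integral sign, which is justified because the integrand is smooth in $t$ and the domain is compact. This gives
\[
g_W^{(k)}(t)=\int_{-W}^{W}(i2\pi f)^k e^{i2\pi ft}\,df .
\]
The two terms in the minimum in \eqref{eq:derivative-sinc} come from two separate estimates of this integral. We prove each separately and then take the smaller.

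For the first bound we use the trivial estimate $|e^{i2\pi ft}|=1$:
\[
|g_W^{(k)}(t)|\le (2\pi)^k\int_{-W}^{W}|f|^k\,df=(2\pi)^k\frac{2W^{k+1}}{k+1}=(2\pi W)^k\frac{2W}{k+1}.
\]
This holds for every $t$, including $t=0$.

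For the second bound, with $t\ne0$, we integrate by parts once. We take $(i2\pi f)^k$ as the factor to differentiate and $e^{i2\pi ft}$ as the factor to integrate, with antiderivative $e^{i2\pi ft}/(i2\pi t)$. The boundary terms are $(i2\pi f)^k e^{i2\pi ft}/(i2\pi t)$ evaluated at $f=\pm W$. Each has modulus $(2\pi W)^k/(2\pi|t|)$, so together they contribute at most $(2\pi W)^k/(\pi|t|)$.

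The remaining integral is
\[
-\frac{1}{i2\pi t}\int_{-W}^{W}k\,(i2\pi)^k f^{k-1}e^{i2\pi ft}\,df .
\]
Bounded in absolute value, it is at most
\[
\frac{k(2\pi)^k}{2\pi|t|}\int_{-W}^{W}|f|^{k-1}\,df=\frac{(2\pi)^k\cdot 2W^k}{2\pi|t|}=\frac{(2\pi W)^k}{\pi|t|}.
\]
When $k=0$ this term is absent.

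Adding the two contributions gives
\[
|g_W^{(k)}(t)|\le \frac{2(2\pi W)^k}{\pi|t|},
\]
which is the second entry of the minimum.

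The only step needing care is this second bound. Estimating the integral term by absolute values is crude, but because $|f|^{k-1}$ integrates exactly to $2W^k/k$, the factor $k$ cancels and no loss occurs. For $k=0$ the estimate reduces to the elementary bound $|\sin(2\pi Wt)/(\pi t)|\le 1/(\pi|t|)\le 2/(\pi|t|)$. Combining the two estimates yields \eqref{eq:derivative-sinc}.
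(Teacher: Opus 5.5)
Your proof is correct and is essentially the paper's argument: the first bound comes from differentiating the integral representation and estimating the integrand trivially. The second comes from a single integration by parts in which the endpoint terms and the remaining integral each contribute at most $(2\pi W)^k/(\pi|t|)$, with $k=0$ handled directly from the sine formula, and your computation $\int_{-W}^{W}|f|^{k-1}\,df=2W^k/k$ spells out the step the paper leaves implicit.
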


\begin{proof}
Differentiating the integral formula gives
$|g_W^{(k)}(t)|\le(2\pi W)^k\,2W/(k+1)$.
For $k\ge1$, one integration by parts bounds the endpoint terms
and the remaining integral each by $(2\pi W)^k/(\pi|t|)$.
For $k=0$, the second bound follows directly from the sine formula.
\end{proof}

We also use the standard Chebyshev interpolation estimate: if $h\in C^k[a,b]$ and $P_{k-1}$ is its degree-$(k-1)$ interpolant at the $k$ Chebyshev nodes, then
\begin{equation}
|h(t)-P_{k-1}(t)|
\le
\frac{(b-a)^k}{2^{2k-1}k!}
\max_{\xi\in[a,b]}|h^{(k)}(\xi)|.
\label{eq:cheb-standard}
\end{equation}
See \cite{SuliMayers2003}.

\begin{proof}[Proof of Lemma~\ref{lem:scalar-near}]
If $L_1=1$, take $\widetilde G=G$. Assume $L_1\ge2$.
For each $n\in I_N$, let $P_{k,n}$ be the degree-$(k-1)$
Chebyshev interpolant of $g_W(t-n)$ on $[-L_1,-1]$, and set
$\widetilde G[\ell,n]=P_{k,n}(\ell)$.
Its columns lie in the span of
$(\ell^j)_{-L_1\le\ell\le-1}$, $0\le j<k$, so
$\rank(\widetilde G)\le k$.
By \eqref{eq:cheb-standard} and Lemma~\ref{lem:derivative-sinc},
\begin{equation}
|G[\ell,n]-\widetilde G[\ell,n]|
\le
\frac{4\bigl(\frac\pi2W(L_1-1)\bigr)^k}{k!}
\min\left\{\frac{W}{k+1},\frac{1}{\pi(n+1)}\right\}.
\label{eq:pointwise-cheb}
\end{equation}
Set $N_1=\floor{(k+1)/(\pi W)}\ge1$.
After squaring \eqref{eq:pointwise-cheb}, split the sum in $n$
at $N_1$ and use
$\sum_{n=N_1}^{\infty}(n+1)^{-2}\le N_1^{-1}$ to obtain
\begin{equation}
\norm{G-\widetilde G}_F^2
\le
\frac{16L_1\bigl(\frac\pi2W(L_1-1)\bigr)^{2k}}{(k!)^2}
\left(
\frac{W^2N_1}{(k+1)^2}
+
\frac{1}{\pi^2N_1}
\right).
\label{eq:frob-mid}
\end{equation}
The assumptions $4WL_1\le1$ and $k\ge1$ imply
\[
L_1\bigl(4W(L_1-1)\bigr)^{2k}\le L_1-1,
\qquad
\frac{4(L_1-1)(k+1)}{\pi}
\le N_1\le\frac{k+1}{\pi W}.
\]
Consequently,
\[
(L_1-1)\left(
\frac{W^2N_1}{(k+1)^2}+\frac{1}{\pi^2N_1}
\right)
\le\frac{1}{2\pi(k+1)}.
\]
Substituting in \eqref{eq:frob-mid} gives
\[
\norm{G-\widetilde G}_F^2
\le
\frac{8}{\pi(k+1)(k!)^2}
\left(\frac{\pi}{8}\right)^{2k}.
\]
Finally, $(k+1)(k!)^2\ge6^{2k}/700$ for $k\ge1$:
the sequence $r_k=700(k+1)(k!)^2/36^k$ has successive ratio
$(k+1)(k+2)/36$, and its minimum is $r_5>1$.
Thus
\[
\norm{G-\widetilde G}_F^2
\le
\frac{5600}{\pi}
\left(\frac{\pi}{48}\right)^{2k}.
\]
\end{proof}

\paragraph{Declaration of AI assistance.}
During the preparation of this manuscript, the authors
used ChatGPT (OpenAI) for editorial assistance,
literature exploration, and suggestions concerning
mathematical arguments. The authors take full
responsibility for the mathematical statements,
proofs, references, and conclusions in the paper.
\newpage 

\printbibliography
\end{document}